\newtheorem{theorem}{Theorem}
\numberwithin{theorem}{section}
\newtheorem{lemma}{Lemma}
\numberwithin{lemma}{section}
\newtheorem{claim}{Claim}
\numberwithin{claim}{section}
\theoremstyle{definition}
\newtheorem{corollary}{Corollary}
\numberwithin{corollary}{section}
\newtheorem{example}{Example}
\numberwithin{example}{section}
\numberwithin{proposition}{section}
\newtheorem{definition}{Definition}
\numberwithin{definition}{section}
\newtheorem{fact}{Fact}
\numberwithin{fact}{section}
\newtheorem{conj}{Conjecture}
\numberwithin{conj}{section}
\def\Ind#1#2{#1\setbox0=\hbox{$#1x$}\kern\wd0\hbox to 0pt{\hss$#1\mid$\hss}
\lower.9\ht0\hbox to 0pt{\hss$#1\smile$\hss}\kern\wd0}
\def\Notind#1#2{#1\setbox0=\hbox{$#1x$}\kern\wd0\hbox to 0pt{\mathchardef
\nn=12854\hss$#1\nn$\kern1.4\wd0\hss}\hbox to
0pt{\hss$#1\mid$\hss}\lower.9\ht0 \hbox to
0pt{\hss$#1\smile$\hss}\kern\wd0}
\title{Pseudofinite Difference Fields and Counting Dimensions}
\author{Tingxiang Zou  (\href{mailto:tingxiang.zou@mail.huji.ac.il}{\url{tingxiang.zou@mail.huji.ac.il}})}
\date{}
\begin{document}
\maketitle
\begin{abstract}
We study a family of ultraproducts of finite fields with the Frobenius automorphism in this paper. Their theories have the strict order property and TP2. But the coarse pseudofinite dimension of the definable sets is definable and integer-valued. Moreover, we also discuss the possible  connection between coarse dimension and transformal transcendence degree in these difference fields.
\end{abstract}

\section{Introduction}
In the development of modern model theory, there is a rich literature devoted to the study of pseudofinite structures. Since they are asymptotic limits of finite structures, their model-theoretic properties often reveal asymptotic behaviours of the corresponding finite classes via \L o\'{s}'s theorem. 
In \cite{chatzidakis1992definable}, a notion of counting measure and dimension of definable sets in pseudofinite fields was developed using the Lang-Weil estimate. It inspired the definition of \emph{one-dimensional asymptotic classes}, a general framework on classes of finite structures based on counting dimension and measure proposed in
 \cite{macpherson2008one}. The ultraproducts of these classes turned out to be model-theoretic tame structures. This counting approach has been further investigated in 
\cite{hrushovski2008counting} and 
\cite{hrushovski2013pseudo} in full generality without any tameness assumptions. 
Two important pseudofinite dimensions have been developed there: the \emph{fine pseudofinite dimension} which comes with a measure (they are the dimension and measure in one-dimensional asymptotic classes) and \emph{coarse pseudofinite dimension}. As has shown in 
 \cite{garcia2015pseudofinite},
 theories with well-behaved fine pseudofinite dimension are tame. Moreover, Hrushovski discovered a surprising link between model theory and finite approximate subgroups using the measure equipped with fine pseudofinite dimension in his fundamental work about stabilizer theorems in 
 \cite{hrushovski2012stable}. And in \cite{hrushovski2013pseudo}, 
Hrushovski again established various links between these two dimensions and algebraic properties of the underlying sets under the assumption of the presence of a field. Connections between model theory of pseudofinite structures and additive combinatorics, e.g. the sum-product phenomenon, the Szemer\'{e}di-Trotter Theorem, have also been made in the same paper.  Recently, significant progress has been made following Hrushovski's approach, for example, a generalization of the Elekes-Szab\'{o} Theorem has been presented using the coarse pseudofinite dimension in 
 \cite{BaysBreuillard}.
 
On the other hand, the class of various expansions of fields is one of the key objects of study in model theory. Examples are differentially closed fields, Henselian valued fields, algebraically closed fields with a generic automorphism, etc. There are lots of natural examples of such structures that are intensively investigated in other areas of mathematics. Studying the model theory of them often extends well-known results to a wider context and sometimes, model-theoretic techniques can help to discover new phenomena. For example, the theory of differentially closed fields plays an important role in Hrushovski's proof of the Mordell-Lang conjecture \cite{hrushovski1996mordell}.

In this paper, we will consider a particular expansion of fields which are pseudofinite: the pseudofinite difference fields, i.e.\ difference fields that are elementary equivalent to ultraproducts of finite fields expanded with some power of Frobenius automorphism. The model theory of pseudofinite fields has been initiated by J. Ax in \cite{ax1968elementary} and subsequently developed in \cite{duret}, \cite{chatzidakis1992definable}, \cite{hrushovski1994groups}. Moreover, the model theory of fields with a distinguished automorphism has also been investigated. The best understood one is ACFA: the theory of algebraically closed fields with a generic automorphism, developed notably in \cite{chatzidakis1999model}, \cite{chatzidakis2002model}. It is the model companion of the theory of difference fields and is model-theoretically tame: supersimple of SU-rank $\omega$. Interestingly, the fixed field of any model of ACFA is a pseudofinite field. Based on these, one might expect a theory of pseudofinite difference fields which is a mixture of PSF (the theory of pseudofinite fields) and ACFA.

M. Ryten studied a specific class of pseudofinite difference fields with the motivation of understanding the asymptotic behaviour of Suzuki groups and Ree groups. In \cite{ryten2007model}, he showed that given any prime $p$ and a pair of coprime numbers $m,n>1$, the class $\{(\mathbb{F}_{p^{k\cdot m+n}},\mbox{Frob}_{p^k}):k\in \mathbb{N}\}$ is a one-dimensional asymptotic class, where $\mbox{Frob}_{p^k}$ is the $k^{th}$ power of the Frobenius map, i.e.\ $\mbox{Frob}_{p^k}$ is the map $x\mapsto x^{p^k}$. He also gave a recursive axiomatization of asymptotic theories of such structures: $\mbox{PSF}_{(m,n,p)}$. In a sense, $\mbox{PSF}_{(m,n,p)}$ is a mixture of PSF and ACFA. In fact, any model of $\mbox{PSF}_{(m,n,p)}$ can be obtained as a definable substructure of some model of ACFA\footnote{See \cite[Lemma 3.3.6]{ryten2007model}.}, and the one-dimensional asymptotic class result is based on the uniform estimate of the number of solutions of definable sets of finite $\sigma$-degree in some model of ACFA in \cite{ryten2006acfa}.

However, $\mbox{PSF}_{(m,n,p)}$ is a bit restricted in the sense that in models of $\mbox{PSF}_{(m,n,p)}$ there are no transformally transcendental elements, i.e.\ elements that satisfy no non-trivial difference polynomial. And most of the nice model-theoretic properties of $\mbox{PSF}_{(m,n,p)}$ come from the tameness of ACFA. 
Our aim in this paper is to study a class of pseudofinite difference fields with transformally transcendental elements. 

Another class of closely related structures is the class of pairs of pseudofinite fields, as the fixed field of a pseudofinite difference field is finite or pseudofinite. As noticed by Macintyre and Cherlin, there are pairs of pseudofinite fields whose theory is not decidable. This wild phenomenon also occurs in the structures that we study. In fact, we will show that in some ultraproduct of finite difference fields there is a definable set such that the family of all internal subsets of it is uniformly definable, see Theorem \ref{thm-sop}. This means in particular that the fine pseudofinite dimension behaves badly and the theory fails to possess tame model-theoretic properties either in the sense of Shelah's classification theory or being decidable, see Corollary \ref{cor-interval}.\footnote{This does not mean that any theory of pseudofinite difference fields with transformally transcendental elements is not tame. We think it is possible that some of them have a decidable theory. But it is not clear which classes and what kind of theories they should be.} However, if we allow the size of the underlying field to grow rapidly enough, then the coarse pseudofinite dimension with respect to the full field behaves extremely well. It takes values in the integers and given a family of uniformly definable sets and an integer $n$, the set of parameters such that the coarse dimension of the corresponding definable sets have value $n$ is definable, see Corollary \ref{cor-main}. This coarse dimension of a definable set in difference fields essentially comes from the fine dimension in pseudofinite fields, which is the Zariski-dimension. Along the line of studying the interaction between counting dimensions and algebraic properties of the underlying structures, we investigate the relation between the integer-valued coarse dimension in our classes of pseudofinite difference fields and the transformal transcendence degree in the algebraic closure. We prove that assuming Conjecture \ref{conj}, i.e.\ if these two dimensions are the same for quantifier-free formulas,  then they also coincide for existential formulas, see Theorem \ref{thm02}. We also classify existentially definable subgroups of algebraic groups under the same conjecture, see Theorem \ref{cor-defgp}.

We remark here that we aim to study the theory of pseudofinite difference fields, which is different with, though closely related to, the theory of pseudofinite fields with a distinguished automorphism. Since there is the concern that the latter may not have a model companion,\footnote{It was claimed that it does not have a model companion in for example \cite[section 3]{chatzidakis1998generic}, but there are some obstacles see \cite[1.12]{chatzidakis2015model}.} neither of these two theories has been carefully studied.

The rest of this paper is organized as the following. Section \ref{sec2} starts with a quick recap of coarse pseudofinite dimension, followed by the definition of a class of ultraproducts of finite difference fields $\mathcal{S}$. The main result is Theorem \ref{th1} and Corollary \ref{cor-main} which states that for any pseudofinite difference field in $\mathcal{S}$, the coarse dimension with respect to the full field $\pmb{\delta}_F$ is integer-valued and definable. Section \ref{sec3} studies the relation between $\pmb{\delta}_F$ and the transformal transcendence degree and it's application to definable groups. The main results are Theorem \ref{thm02} and Theorem \ref{cor-defgp}. Section \ref{sec4} studies the negative model-theoretic aspects of structures of $\mathcal{S}$. They do not belong to any well-studied tame class, is not decidable (Corollary \ref{cor-interval}) and the model-theoretic algebraic closure is different from the algebraic closure in the sense of difference algebra (Theorem \ref{lem-defclosure}). 

\noindent\emph{Notations:} We will denote by $\varphi,\psi,\phi,\ldots$ formulas (possibly with parameters), $x,y,z,\ldots$ tuples of variables, $|x|$ the length of the tuple $x$ and $|\varphi|$ the length of the formula $\varphi$. Suppose $M$ is an $\mathcal{L}$-structure and $\varphi(x)$ an $\mathcal{L}$-formula with parameters in $M$. We write $\varphi(M^{|x|})$ to be the definable set defined by $\varphi(x)$ in $M$, i.e.\ $\varphi(M^{|x|}):=\{a\in M^{|x|}:M\models \varphi(a)\}$. We denote by $\mathbb{P}$ the set of prime numbers.

\textbf{Acknowledgement:} T. Zou is supported by the Golda Meir Fellowship Fund and the European Research Council (ERC) under the European Unions Horizon 2020 research and innovation program (Grant No. 692854). She was partially supported by ValCoMo (ANR-13-BS01-0006) while the research was done. She wants to thank her Ph.D. supervisor Frank Wagner for initiating this interesting project and contributing lots of valuables ideas. She is very grateful to Ehud Hrushovski for pointing out a mistake in the previous version and suggesting the proof of Lemma \ref{lem-conj}. She also wants to thank the referee for numerous comments and suggestions, as well as the encouragement of rewrite the introduction. She is grateful to Zo\'{e} Chatzidakis for answering various questions about pseudofinite fields and ACFA and to Dar{\'i}o Garc{\'i}a for suggestions of corrections in the previous version of this paper.

\section{Coarse pseudofinite dimension}\label{sec2}
We will study the coarse pseudofinite dimension of a class of ultraproducts of finite difference fields in this section. We will show that their coarse dimension with respect to the full field behaves well. The main tool is that the fine dimension of pseudofinite fields is integer-valued and there are only finitely many possible values of the measure for a uniformly definable family of sets of a fixed dimension (see Fact \ref{fact0}). This allows us to estimate the size of sets defined by difference formulas in certain finite difference fields. We show further that the coarse dimension is definable, with only the assumptions that the dimension is integer-valued and a field structure is included in the language. 
%However, this approach only picks out a certain class of finite difference fields, which are those with the sizes of fields growing much faster than the (powers of) Frobenius. Other classes of finite fields (except those with bounded $\sigma$-degree) remains unstudied. Moreover, the requirement of rapid growth of the sizes of the fields makes the theory undecidable, see Section \ref{sec4}. 

We begin with some preliminaries on difference fields and pseudofinite fields.

\begin{definition}
A \textsl{difference field} is a field $(F,+,\cdot,0,1)$ together with a field automorphism $\sigma$ ( in particular $\sigma$ is surjective).

The \textsl{language of difference rings} $\mathcal{L}_\sigma$ is the language of rings augmented by a unary function symbol $\sigma$. 
\end{definition}

\begin{definition} We fix an ambient difference field $L$.
\begin{itemize}
\item
Let $A$ be a subset. We denote by $A_\sigma$ the smallest difference subfield containing $A$ and closed under $\sigma$ and $\sigma^{-1}$. 
\item
Let $E$ be a difference subfield and $a$ be a tuple. The \textsl{$\sigma$-degree}, $\textsf{deg}_\sigma(a/E)$, is the  transcendence degree of $(E,a)_{\sigma}$ over $E$. 
\item
Let $E$ be a difference subfield. If there is no non-zero difference polynomial over $E$ vanishing on $a$, then we say $a$ is \textsl{transformally transcendental} over $E$ if $a$ is an element in $L$ and $a$ is \textsl{transformally independent} over $E$ if $a$ is a tuple in $L$.
\item
Let $E$ be a difference subfield and $a$ be a tuple. The \textsl{transformal transcendence degree} of $a$ over $E$ is defined as the maximal length of a transformally independent subtuple of $a$ over $E$.
\end{itemize}
\end{definition}

We now give the definition of pseudofinite structures and coarse pseudofinite dimension.

\begin{definition} By a \emph{pseudofinite structure} we mean an infinite structure that is elementary equivalent to a non-principal ultraproduct of finite structures.
\end{definition}
\noindent\emph{Remark:} In this paper, we assume that we always work with the ultraproducts, as they are essential for the definition of coarse pseudofinite dimension. Hence, from now on, by a pseudofinite structure, we mean an infinite ultraproduct of finite structures. Let $M=\prod_{i\in I}M_i/\mathcal{U}$ be a pseudofinite structure. We say a set $X\subseteq M^n$ is \emph{internal} if $X=\prod_{i\in I}X_i/\mathcal{U}$ where $X_i\subseteq (M_i)^n$ for each $i\in I$.

\begin{definition}
Let $M$ be a pseudofinite structure over some non-principal ultrafilter $\mathcal{U}$ on $I$ and $\mathbb{R}^*$ be the ultrapower of $\mathbb{R}$ along $\mathcal{U}$. Then any internal set $D\subseteq M^n$ has a non-standard cardinality $|D|\in\mathbb{R}^*$, that is, if $D:=\prod_{i\in I}D_i/\mathcal{U}$, then $|D|:=(|D_i|)_{i\in I}/\mathcal{U}\in\mathbb{R}^*$, where $|D_i|$ is the cardinality of the finite set $D_i$. Let $\alpha\in\mathbb{R}^*$. 

\begin{itemize}
\item
The \textsl{coarse pseudofinite dimension} (or simply \emph{coarse dimension}) on $M$ normalised by $\alpha$, denoted by $\pmb{\delta}_{\alpha}$, is a function from definable sets of $M$ to $\mathbb{R}^{\geq0}\cup\{\infty\}$, defined as $$\pmb{\delta}_{\alpha}(A):=\mbox{st}\left(\frac{\log|A|}{\alpha}\right),$$ for $A\subseteq M^n$ definable. When $\alpha:=\log|X|$ for some internal set $X$, we also write $\pmb{\delta}_\alpha$ as $\pmb{\delta}_{X}$ and call $\pmb{\delta}_{X}$ the \textsl{coarse pseudofinite dimension with respect to $X$}.

If $\varphi(x)$ is a formula with parameters in $M$, then we also write $\pmb{\delta}_{X}(\varphi(M^{|x|}))$ as $\pmb{\delta}_{X}(\varphi(x))$.
\item
We say $\pmb{\delta}_{\alpha}$ is \textsl{continuous} if for any formula $\phi(x,y)$ without parameters and for any $r_1<r_2\in \mathbb{R}$, there is some $\emptyset$-definable set $D$ with $$\{a\in M^{|y|}:\pmb{\delta}_{\alpha}(\phi(x,a))\leq r_1\}\subseteq D\subseteq \{a\in M^{|y|}:\pmb{\delta}_{\alpha}(\phi(x,a))<r_2\}.$$
\item
We say $\pmb{\delta}_{\alpha}$ is \textsl{definable} if $\pmb{\delta}_{\alpha}$ is continuous and the set $\{\pmb{\delta}_{\alpha}(\phi(x,a)):a\in M^{|y|}\}$ is finite for any $\emptyset$-definable formula $\phi(x,y)$. By compactness, it is equivalent to the following: for any $\emptyset$-definable formula $\phi(x,y)$ and $a\in M^{|y|}$, there is $\xi(y)\in\mbox{tp}(a)$ such that $$M\models\xi(b)\text{ if and only if }\pmb{\delta}_{\alpha}(\phi(x,b))=\pmb{\delta}_{\alpha}(\phi(x,a)).$$
\end{itemize}

\end{definition}

\begin{definition}
Let $M$ be a pseudofinite structure and $\alpha\in \mathbb{R}^*$. Let $\pi(x)$ be partial type. Define $$\pmb{\delta}_{\alpha}(\pi(x)):=\inf\left\{\pmb{\delta}_{\alpha}(\varphi(x)),\pi(x)\models \varphi(x)\right\}.$$
Let $a$ be a tuple in $M$ and $A\subseteq M$. 
Define $\pmb{\delta}_{\alpha}(a/A):=\pmb{\delta}_{\alpha}(\mbox{tp}(a/A)).$
\end{definition}
\begin{fact}\cite[Lemma 2.10]{hrushovski2013pseudo} If $\pmb{\delta}_{\alpha}$ is continuous, then $\pmb{\delta}_{\alpha}$ is additive, i.e.\ for any $a,b,A\subseteq M$ we have $\pmb{\delta}_{\alpha}(a,b/A)=\pmb{\delta}_{\alpha}(a/A,b)+\pmb{\delta}_\alpha(b/A)$.
\end{fact}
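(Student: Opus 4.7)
The plan is to prove the two inequalities
\[
\pmb{\delta}_{\alpha}(a,b/A) \leq \pmb{\delta}_{\alpha}(a/A,b) + \pmb{\delta}_{\alpha}(b/A) \quad\text{and}\quad \pmb{\delta}_{\alpha}(a,b/A) \geq \pmb{\delta}_{\alpha}(a/A,b) + \pmb{\delta}_{\alpha}(b/A)
\]
separately, with continuity of $\pmb{\delta}_{\alpha}$ playing an essential role in both. Write $r := \pmb{\delta}_{\alpha}(a/A,b)$, $s := \pmb{\delta}_{\alpha}(b/A)$, and fix an arbitrary standard $\epsilon > 0$. In each direction I would first approximate one or both infima by concrete formulas of near-minimal dimension, then apply continuity to control the whole fibration.

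For $\leq$, pick an $A$-definable $\phi(x,y)$ with $\phi(x,b) \in \mbox{tp}(a/Ab)$ and $\pmb{\delta}_{\alpha}(\phi(x,b)) \leq r + \epsilon$, and $\psi(y) \in \mbox{tp}(b/A)$ with $\pmb{\delta}_{\alpha}(\psi) \leq s + \epsilon$; then $\theta(x,y) := \phi(x,y) \wedge \psi(y)$ lies in $\mbox{tp}(a,b/A)$ and satisfies $|\theta| = \sum_{b' \models \psi}|\phi(x,b')|$. Continuity applied to $\phi(x,y)$ with thresholds $r+\epsilon < r+2\epsilon$ yields an $A$-definable set $D \ni b$ with every $b' \in D$ satisfying $\pmb{\delta}_{\alpha}(\phi(x,b')) < r+2\epsilon$. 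This per-fibre bound upgrades to a uniform internal bound $\sup_{b' \in D}|\phi(x,b')| \leq 2^{(r+3\epsilon)\alpha}$: the supremum is itself a non-standard cardinality attained at some $b^* \in D$, and if it exceeded that value then $\pmb{\delta}_{\alpha}(\phi(x,b^*)) \geq r+3\epsilon$ would contradict $b^* \in D$. Replacing $\psi$ by $\psi \wedge D$ (still in $\mbox{tp}(b/A)$) and summing over fibres gives $|\theta| \leq 2^{(s+2\epsilon)\alpha} \cdot 2^{(r+3\epsilon)\alpha}$, whence $\pmb{\delta}_{\alpha}(\theta) \leq r + s + 5\epsilon$.

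For $\geq$, fix any $\theta(x,y) \in \mbox{tp}(a,b/A)$ and apply continuity to $\theta(x,y)$ with thresholds $r-2\epsilon < r-\epsilon$, producing an $A$-definable $D$ sandwiched between $\{b' : \pmb{\delta}_{\alpha}(\theta(x,b')) \leq r-2\epsilon\}$ and $\{b' : \pmb{\delta}_{\alpha}(\theta(x,b')) < r-\epsilon\}$. Because $\theta(x,b) \in \mbox{tp}(a/Ab)$ forces $\pmb{\delta}_{\alpha}(\theta(x,b)) \geq r$, the element $b$ lies in the $A$-definable set $E := \{b' : \exists x\, \theta(x,b')\} \setminus D$, which therefore belongs to $\mbox{tp}(b/A)$; hence $|E| \geq 2^{(s-\epsilon)\alpha}$ and every $b' \in E$ satisfies $\pmb{\delta}_{\alpha}(\theta(x,b')) > r-2\epsilon$. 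The symmetric argument applied to an internally attained minimum yields $\min_{b' \in E}|\theta(x,b')| \geq 2^{(r-3\epsilon)\alpha}$, and summing over fibres gives $|\theta| \geq 2^{(r+s-4\epsilon)\alpha}$, so $\pmb{\delta}_{\alpha}(\theta) \geq r + s - 4\epsilon$. Letting $\epsilon \to 0$ closes both inequalities (the degenerate case where $r$ or $s$ equals $\infty$ is handled by running the argument with $r$ replaced by an arbitrary real $M$).

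The main obstacle is the uniformisation step, since continuity is a per-fibre statement while the sum-of-fibres estimate demands a single bound across all $b' \in D$ or $b' \in E$. The resolution relies on the fact that internal suprema and infima of a definable family of non-standard cardinalities are themselves internal and attained, reducing a uniform bound to one application of the pointwise inequality at the extremal element. This is also why the sandwich formulation of continuity with a \emph{strict} upper threshold is indispensable: it provides the slack needed to absorb the infinitesimal gap between $\log|X|/\alpha$ and its standard part when transitioning between inequalities on $\pmb{\delta}_{\alpha}$ and inequalities on cardinality.
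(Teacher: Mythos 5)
The paper states this only as a cited \texttt{Fact} (Hrushovski, \cite[Lemma~2.10]{hrushovski2013pseudo}) and does not reproduce a proof, so there is no in-paper argument to compare against; your argument is correct and follows the standard route: fibre $\theta(x,y)$ over the $y$-coordinate, use the continuity sandwich to capture $b$ inside an $A$-definable set on which the fibre-dimensions are controlled, upgrade the per-fibre bound to a uniform internal bound via the extremal element of the internal family, and sum. The one implicit step worth making explicit is that continuity, as defined in the paper, applies only to $\emptyset$-definable $\phi(x,y)$; one must first absorb the $A$-parameters of $\phi$ and $\theta$ into the $y$-tuple and re-specialise afterwards to obtain an $A$-definable $D$, which works but should be said. (You also tacitly use $\alpha>0$, which holds for $\alpha=\log|X|$ with $X$ infinite and internal.)
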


\noindent \emph{Remark:} 
\begin{itemize}
\item
If $\pi$ is a partial type over $A$, then $\pi$ extends to a complete type $p$ over $A$ with $\pmb{\delta}_\alpha(\pi)=\pmb{\delta}_\alpha(p)$.
\item
\cite[5.1]{hrushovski2012stable} $\pmb{\delta}_{\alpha}$ satisfies subadditivity in general, namely, if $f:X\to Y$ is a definable function such that $\pmb{\delta}_{\alpha}(f^{-1}(y))\geq\alpha$ for any $y\in Y$ and $\pmb{\delta}_{\alpha}(Y)\geq\beta$, then $\pmb{\delta}_{\alpha}(X)\geq\alpha+\beta$.
\item
There is always a way to make $\pmb{\delta}_{\alpha}$ continuous by expanding the language of the structure $M$. However, this might add new definable sets to $M$, which could be an inconvenience.
\end{itemize}

The following fact is a well-known result in the class of finite fields, which gives a uniform estimate of number of solutions of definable sets in all finite fields. Our main result will be based on it.

\begin{fact}\label{fact0}\cite[Main Theorem]{chatzidakis1992definable}
Let $\mathcal{L}$ be the language of rings. For every formula $\varphi(x,y)\in\mathcal{L}$ with $|x|=n,|y|=m$ there are a constant $C_{\varphi}>0$ and a finite set $D_{\varphi}\subset \{0,\ldots,n\}\times\mathbb{R}^{>0}$ such that the following holds:

For any finite field $\mathbb{F}_q$ and $a\in (\mathbb{F}_q)^m$, if $\varphi((\mathbb{F}_q)^n,a)\neq\emptyset$, then there is some $(d,\mu)\in D$ such that $$\left|\left|\varphi\left((\mathbb{F}_q)^n,a\right)\right|-\mu\cdot q^d\right|\leq C_{\varphi}\cdot q^{d-\frac{1}{2}}.$$
\end{fact}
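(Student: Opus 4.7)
The plan is to reduce the uniform finite-field estimate to a geometric statement about rational points of algebraic varieties and then invoke the Lang-Weil estimate. Recall that Lang-Weil provides, for an absolutely irreducible variety $V \subset \mathbb{A}^n$ of dimension $d$ and degree at most $D$ defined over $\mathbb{F}_q$, the bound $\bigl| |V(\mathbb{F}_q)| - q^d\bigr| \leq C(n,d,D)\cdot q^{d-1/2}$, with a constant depending only on $(n,d,D)$ and crucially not on $q$.

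First I would reduce the uniformity to an assertion about pseudofinite fields by a standard compactness argument: if the claim failed for some $\varphi$, sequences $(q_i, a_i)$ witnessing failure would yield, after an ultraproduct along a non-principal ultrafilter, a pseudofinite field $F$ and a parameter $a$ for which $\varphi(F^n, a)$ admits no acceptable $(d, \mu)$. I would then appeal to the description of definable sets in pseudofinite fields built on Ax's characterization (perfect PAC fields with procyclic absolute Galois group): modulo a subset of strictly smaller geometric dimension, $\varphi(F^n, a)$ coincides with the set of $F$-rational points of a constructible set $V_a \subset \mathbb{A}^n_{\overline{F}}$ whose top-dimensional absolutely irreducible components, together with the Frobenius action permuting them, are determined by the parameter $a$. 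By relative Chow-type decompositions these geometric invariants take only finitely many values as $a$ varies, producing the finite set $D_\varphi$, with $d$ the dimension and $\mu$ a positive rational counting the Frobenius-fixed top components weighted by their orbit sizes.

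For each fixed regime the main term $\mu q^d$ is obtained by summing Lang-Weil contributions over the top-dimensional absolutely irreducible components, which are defined over a bounded finite extension of $\mathbb{F}_q$, and retaining those whose Frobenius orbit produces $\mathbb{F}_q$-points; the lower-dimensional strata contribute $O(q^{d-1})$, which is absorbed into $C_\varphi q^{d-1/2}$. The constant $C_\varphi$ is then controlled by the total degree and the number of irreducible components arising in the constructible decomposition, both of which depend only on $\varphi$ and not on the pair $(q, a)$.

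The main obstacle is the simultaneous control of uniformity in the parameter $a$ and the varying base field $\mathbb{F}_q$. One needs the decomposition into absolutely irreducible components, their dimensions and degrees, and the induced Frobenius action all to be governed by a constructible partition of the parameter space — a subtle point because absolute irreducibility and the Galois action are both sensitive to the base. The resolution is that while fibrewise absolute irreducibility depends on $\mathbb{F}_q$, the generic geometric invariants of the family over $\mathbb{A}^{|y|}$ stratify the parameter space in a geometrically constructible way, and combining this with \L o\'s's theorem applied to the ultraproduct extracts the desired finite list of $(d,\mu)$-regimes together with uniform error bounds.
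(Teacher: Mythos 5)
The paper does not prove this statement; it cites it as \cite[Main Theorem]{chatzidakis1992definable} (Chatzidakis--van den Dries--Macintyre), so there is no ``paper proof'' to compare your attempt against. What you have written is a reasonable high-level sketch of the route taken in that original paper: reduce via compactness to pseudofinite fields, exploit Ax's characterization, stratify the parameter space constructibly, and apply Lang--Weil on the top-dimensional absolutely irreducible components with lower strata absorbed into the error term. Those are indeed the load-bearing ingredients.

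The real gap in your sketch, and the genuinely hard part of CDM, is the sentence where you assert that ``modulo a subset of strictly smaller geometric dimension, $\varphi(F^n,a)$ coincides with the set of $F$-rational points of a constructible set.'' This is close to the content of the theorem itself, so invoking it as a given risks circularity. The theory of finite (or pseudofinite) fields does not eliminate quantifiers in the ring language, and the actual work in CDM is to show that every formula is equivalent, modulo the theory, to a Boolean combination of formulas of the restricted form $\exists y\,(f(x,y)=0 \wedge g(x,y)\neq 0)$ --- i.e.\ a Galois-stratification-flavoured normal form --- and that the data governing the decomposition (dimensions, degrees, splitting behaviour of covers, Frobenius action on geometric components) stratify the parameter space into finitely many definable pieces uniformly in $q$. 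Your appeal to ``relative Chow-type decompositions'' gestures at this, but you should make explicit that this normal form / Galois stratification step is the crux; without it the compactness argument you begin with only tells you that failure cases accumulate in an ultraproduct, not how to rule them out. Once that reduction is in place, the Lang--Weil bookkeeping you describe is essentially correct.
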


Now we start to define a special class of ultraproducts of finite difference fields and study their coarse pseudofinite dimension with respect to the full field. The main observation is that given a difference formula $\varphi(x)$ and we want to estimate the size of the set that $\varphi(x)$ defines in a finite difference field $(\mathbb{F}_{p^k},\mbox{Frob}_{p^m})$. If we allow $k$ grow while keep $p$ and $m$ fixed, then the set defined by $\varphi(x)$ has a dimension which comes from the fine pseudofinite dimension in the classes of pseudofinite fields. The trick is that we translate the difference formula $\varphi(x)$ into a ring formula $\varphi_{p^m}(x)$ by replacing terms $\sigma(t)$ with $t^{p^m}$. If $k$ is big enough compared to $p$ and $m$, then the set defined by $\varphi(x)$ in $(\mathbb{F}_{p^k},\mbox{Frob}_{p^m})$ will be roughly propositional to $(p^k)^d$, where $d\leq |x|$ is the fine dimension of $\varphi_{p^m}$, which depends on $\varphi$, $p$ and $m$. If we take an ultraproduct of $\{(\mathbb{F}_{p^k},\mbox{Frob}_{p^m}):p\in\mathbb{P},k,m\geq 1\}$ over some non-principal ultrafilter $\mathcal{U}$, then $\mathcal{U}$ will pick one of the dimension $d\leq |x|$. Suppose almost all $k$ in $(\mathbb{F}_{p^k},\mbox{Frob}_{p^m})$ are big enough compared to $p$ and $m$, then $d$ will be the coarse pseudofinite dimension with respect to the full field of the set defined by $\varphi$ in the ultraproduct.

\begin{definition}\label{def-phip}
Let $\mathcal{L}_\sigma$ be the language of difference rings. Let $\varphi(x,y)$ be a formula defined in $\mathcal{L}_\sigma$ without parameters.
For any prime $p$, define $\varphi_p(x,y)$ as the result of replacing each occurrence of $\sigma(t)$ in $\varphi(x,y)$ by $t^{p}$. Clearly, $\varphi_p(x,y)$ is a formula in the language of rings $\mathcal{L}$.
\end{definition}

Let $\mathbb{P}$ be the set of all primes. For any formula $\varphi(x,y)$ in $\mathcal{L}_\sigma$ and $p\in\mathbb{P}$, consider $\varphi_p(x,y)\in\mathcal{L}$. There are $C_{\varphi_p}$ and the finite  set $D_{\varphi_p}$ as stated in Fact \ref{fact0}. Let $$E_{\varphi_p}:=\bigcup_{0\leq d\leq |x|}\{\mu:(d,\mu)\in D_{\varphi_p}\}.$$

Define $$N^p_{\varphi(x,y)}:=\max\left\{\mu,\frac{1}{\mu},2\log_p\left(\frac{2C_{\varphi_p}}{\mu}\right):\mu\in E_{\varphi_p}\right\}.$$

Let \begin{equation}\label{eq1}
f(\ell,p):=\max\{N^p_{\varphi(x,y)}:|\varphi(x,y)|\leq \ell\}.
\end{equation}

\begin{definition}
Define the family $\mathcal{S}$ of pseudofinite difference fields as $$\mathcal{S}:=\left\{\prod_{p\in \mathbb{P}}(\mathbb{F}_{p^{k_p}},\text{Frob}_{p})/\mathcal{U}:k_p\geq f(p,p) \text{ for all }p\in\mathbb{P}, ~\mathcal{U} \text{ a non-principal ultrafilter}\right\}.$$
\end{definition}

\begin{theorem}\label{th1}
Let $(F,\mbox{Frob}):=\prod_{p\in \mathbb{P}}(\mathbb{F}_{p^{k_p}},\mbox{Frob}_{p})/\mathcal{U}\in \mathcal{S}$. Then the coarse pseudofinite dimension with respect to $F$ is integer-valued on all $\mathcal{L}_\sigma$-definable sets. 
\end{theorem}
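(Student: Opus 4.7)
The plan is to pull the cardinality computation back to the finite factors, apply Fact \ref{fact0} to a suitable ring-formula avatar of $\varphi$, and use the growth condition $k_p\geq f(p,p)$ to absorb all of the $p$-dependent error into the infinitesimals of the ultrapower. To begin I fix an $\mathcal{L}_\sigma$-formula $\varphi(x,y)$ of length $\ell_0$ and a parameter $a=[a_p]_{\mathcal{U}}\in F^{|y|}$. The crucial observation to exploit is that in each factor $(\mathbb{F}_{p^{k_p}},\mbox{Frob}_p)$ the automorphism $\sigma$ is literally the map $t\mapsto t^p$, so the difference formula $\varphi(x,a_p)$ and the pure ring formula $\varphi_p(x,a_p)$ of Definition \ref{def-phip} define exactly the same subset of $\mathbb{F}_{p^{k_p}}^{|x|}$. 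This reduces the entire question to the uniform ring estimate of Fact \ref{fact0}.

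For any $p\geq \ell_0$ the monotonicity of $f$ gives $k_p\geq f(p,p)\geq f(\ell_0,p)\geq N^p_{\varphi}$, and these primes form a $\mathcal{U}$-large set. I will then apply Fact \ref{fact0} at $q=p^{k_p}$ to obtain, whenever the set is non-empty, integers $d_p\in\{0,\ldots,|x|\}$ and numbers $\mu_p\in E_{\varphi_p}$ with $\bigl||\varphi_p(\mathbb{F}_{p^{k_p}}^{|x|},a_p)|-\mu_p(p^{k_p})^{d_p}\bigr|\leq C_{\varphi_p}(p^{k_p})^{d_p-1/2}$. The three clauses in the definition of $N^p_{\varphi}$ are engineered precisely so that $k_p\geq N^p_{\varphi}$ forces $\mu_p\leq k_p$, $1/\mu_p\leq k_p$, and $C_{\varphi_p}/(\mu_p\, p^{k_p/2})\leq 1/2$; hence the count in the $p$-th factor takes the form $|\varphi(\mathbb{F}_{p^{k_p}}^{|x|},a_p)|=\mu_p(p^{k_p})^{d_p}(1+\epsilon_p)$ with $|\epsilon_p|\leq 1/2$. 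Because $d_p$ takes values in the finite set $\{0,\ldots,|x|\}$, the ultrafilter picks out a single integer $d$, and dividing the log-cardinality by $\log|F|=k_p\log p$ yields in $\mathbb{R}^*$
\[
\frac{\log|\varphi(F^{|x|},a)|}{\log|F|}=d+\frac{\log\mu_p}{k_p\log p}+\frac{\log(1+\epsilon_p)}{k_p\log p}.
\]
The inequality $1/k_p\leq\mu_p\leq k_p$ bounds the middle summand in absolute value by $\log k_p/(k_p\log p)\leq 1/\log p$, and $|\log(1+\epsilon_p)|\leq\log 2$ bounds the last summand by $\log 2/\log p$. Since $\mathcal{U}$ is non-principal on $\mathbb{P}$ the prime $p$ tends to infinity along $\mathcal{U}$, so both error terms are infinitesimal and the standard part is exactly the integer $d$. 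The residual case, in which $\varphi(x,a_p)=\emptyset$ for $\mathcal{U}$-almost all $p$, produces $\varphi(F^{|x|},a)=\emptyset$, to which one assigns the conventional value.

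The hard part will not be this final piece of algebra but the uniformity of the estimates across $p$. The constants $C_{\varphi_p}$ and the measure candidates in $E_{\varphi_p}$ supplied by Fact \ref{fact0} genuinely depend on $p$, because the ring translation $\varphi_p$ changes with $p$; absent a $p$-tailored lower bound on $k_p$ there would be no reason for $\log\mu_p/(k_p\log p)$ or the relative Lang--Weil error to be infinitesimal. The definitions of $N^p_{\varphi}$ and of $f(\ell,p)$ in~(\ref{eq1}) are engineered exactly so that $k_p\geq f(p,p)$ collapses all these quantities in the standard-part limit, and checking that this engineering really does the job is the technical heart of the argument.
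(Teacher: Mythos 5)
Your proposal is correct and follows essentially the same route as the paper: translate $\varphi$ to the ring formula $\varphi_p$, observe the two define the same sets in the $p$-th factor, invoke Fact~\ref{fact0}, use the three clauses of $N^p_\varphi$ together with $k_p\geq f(p,p)$ to control the multiplicative constant and the Lang--Weil error, let $\mathcal{U}$ select the single dimension $d\in\{0,\ldots,|x|\}$, and pass to logarithms to see both error terms tend to $0$ as $p\to\infty$ along $\mathcal{U}$. The only cosmetic difference is that you package the estimate as $|\varphi_p|=\mu_p(p^{k_p})^{d}(1+\epsilon_p)$ rather than writing the sandwich $\tfrac{1}{2k_p}p^{k_pd}\leq|\varphi_p|\leq 2k_p\,p^{k_pd}$, but the bounds extracted and the limiting argument are identical.
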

\begin{proof}
Let $\varphi(x,y)$ be an $\mathcal{L}_\sigma$-formula. Consider a parameter $a=(a_p)_{p\in\mathbb{P}}/\mathcal{U}\in F^{|y|}$. For any $p\in \mathbb{P}$, we know that there are $(d_{k_p},\mu_{k_p})\in \{0,\ldots,|x|\}\times \mathbb{R}^{>0}$ and $C_{\varphi_{p}}\geq 0$ such that for $a_p\in (\mathbb{F}_{p^{k_p}})^{|y|}$, we have $$\left|\left|\varphi_p\left((\mathbb{F}_{p^{k_p}})^{|x|},a_p\right)\right|-\mu_{k_p}\cdot p^{k_p\cdot d_{k_p}}\right|\leq C_{\varphi_p}\cdot p^{k_p(d_{k_p}-\frac{1}{2})}.$$ We say that $\varphi_p(x,a_p)$ has dimension $d_{k_p}$ in $\mathbb{F}_{p^{k_p}}$. As $d_{k_p}\leq |x|$, there is exactly one $d\in\{0,\ldots,|x|\}$ with $\left\{p\in\mathbb{P}: \varphi_p(x,a_p) \text{ has dimension } d\text{ in }\mathbb{F}_{p^{k_p}}\right\}\in \mathcal{U}$. We claim that $\pmb{\delta}_F(\varphi(F^{|x|},a))=d$.

Proof of the claim: Note that for any $p\in\mathbb{P}$ and $c\in (\mathbb{F}_{p^{k_p}})^{|x|}$, we have $$\mathbb{F}_{p^{k_p}}\models\varphi_p(c,a_p)\text{ if and only if } (\mathbb{F}_{p^{k_p}},\mbox{Frob}_p)\models \varphi(c,a_p).$$ 

Let $I=\left\{p\in\mathbb{P}:p>|\varphi(x,y)|\text{ and }\varphi_p(x,a_p) \text{ has dimension } d \text{ in }\mathbb{F}_{p^{k_p}}\right\}$. Clearly, $I\in\mathcal{U}$. Then for any $p\in I$, $$\left|\left|\varphi_p\left((\mathbb{F}_{p^{k_p}})^{|x|},a_p\right)\right|-\mu_{k_p}\cdot p^{k_p\cdot d}\right|\leq C_{\varphi_p}\cdot p^{k_p(d-\frac{1}{2})},$$ and $k_p\geq f(p,p)\geq \max\left\{\mu_{k_p},\dfrac{1}{\mu_{k_p}},2\log_p\left(\dfrac{2C_{\varphi_p}}{\mu_{k_p}}\right)\right\}$.

As $k_p\geq 2\log_p\left(\dfrac{2C_{\varphi_p}}{\mu_{k_p}}\right)$, we get $$C_{\varphi_p}\cdot p^{k_p(d-\frac{1}{2})}\leq \frac{1}{2}\mu_{k_p}\cdot p^{k_p\cdot d}.$$ Therefore, $$\frac{1}{2}\mu_{k_p}\cdot p^{k_p\cdot d}\leq \left|\varphi_p\left((\mathbb{F}_{p^{k_p}})^{|x|},a_p\right)\right|\leq \frac{3}{2}\mu_{k_p}\cdot p^{k_p\cdot d}.$$ Furthermore, by the definition of $k_p$, we have $\frac{1}{k_p}<\mu_{k_p}<k_p$. Hence, $$\frac{1}{2k_p}\cdot p^{k_p\cdot d}\leq\left|\varphi_p\left((\mathbb{F}_{p^{k_p}})^{|x|},a_p\right)\right|\leq 2k_p\cdot p^{k_p\cdot d}.$$ This implies $$d-\frac{\log(2k_p)}{k_p\cdot \log p}\leq\frac{\log\left|\varphi_p\left((\mathbb{F}_{p^{k_p}})^{|x|},a_p\right)\right|}{\log (p^{k_p})}\leq d+\frac{\log(2k_p)}{k_p\cdot\log p }.$$ Since $\lim_{p\to\infty}\dfrac{\log(2k_p)}{k_p\cdot \log p}=0$, we have $$\lim_{p\to\infty,~p\in I}\dfrac{\log\left|\varphi_p\left((\mathbb{F}_{p^{k_p}})^{|x|},a_p\right)\right|}{\log (p^{k_p})}=d.$$ 

Therefore, $\pmb{\delta}_F(\varphi(x,a))=d$.
\end{proof}

\noindent \emph{Remark:} This proof works also for pseudofinite difference fields of characteristic $p>0$, that is, for $\prod_{i\in I}(\mathbb{F}_{p^{k_i}},\mbox{Frob}_{p^{m_i}})/\mathcal{U}$ provided $k_i>>m_i$ for almost all $i$. More precisely, in the proof of Theorem \ref{th1}, instead of translating $\varphi$ to $\varphi_{p}$ for each prime $p$, we translate it to $\varphi_{p^{m_i}}$ for each $i\in I$. That is, given a difference formula $\varphi(x,y)$ we consider the following ring formula $\varphi_{p^{m_i}}(x,y)$ obtained by replacing each occurrence of $\sigma(t)$ in $\varphi(x,y)$ by $t^{p^{m_i}}$. Then we use Fact \ref{fact0} and the same strategy to get the desired result.

In the following, we will show that the coarse dimension $\pmb{\delta}_{F}$ is definable using the field structure. To prove this, we first need a lemma.

\begin{lemma}\label{lem-subadd}
Let $M$ be an ultraproduct of finite structures in the language $\mathcal{L}'$ and $X$ be an internal subset of $M$. Let $\varphi(x,y)$ be an $\mathcal{L}'$-formula with $|x|=m$ and $|y|=n$. Suppose there is some $r\in \mathbb{R}^{\geq 0}$ such that for all $b\in M^m$ we have $\pmb{\delta}_X(\varphi(x,b))=r$ whenever $\varphi(x,b)\neq\emptyset$. Then $$\pmb{\delta}_X(\varphi(x,y))=r+\pmb{\delta}_X(\exists x\varphi(x,y)).$$
\end{lemma}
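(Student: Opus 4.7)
The plan is to prove two inequalities. The $\geq$ direction is immediate from the subadditivity property recorded in the preliminaries: the projection $\pi\colon \varphi(M^{m+n}) \to B := (\exists x\,\varphi)(M^n)$ is definable, every fibre is nonempty with coarse dimension exactly $r$, and $\pmb{\delta}_X(B) = \pmb{\delta}_X(\exists x\,\varphi(x,y))$, so subadditivity yields
$$\pmb{\delta}_X(\varphi(x,y)) \;\geq\; r + \pmb{\delta}_X(\exists x\,\varphi(x,y)).$$

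For the reverse inequality, I would work directly with the nonstandard cardinalities. Write $M = \prod_{i \in I} M_i / \mathcal{U}$; then the internal identity
$$\bigl|\varphi(M^{m+n})\bigr| \;=\; \sum_{b \in B} \bigl|\varphi(M^m,b)\bigr|$$
holds. Since $B$ is internal, the maximum $\max_{b \in B}\bigl|\varphi(M^m,b)\bigr|$ is attained at some $b_+ \in B$: pick a maximizer $b_{+,i} \in B_i$ coordinatewise, and let $b_+ := (b_{+,i})_{i}/\mathcal{U}$; Łoś's theorem ensures $b_+ \in B$ and witnesses the maximum. Consequently
$$\bigl|\varphi(M^{m+n})\bigr| \;\leq\; |B|\cdot\bigl|\varphi(M^m,b_+)\bigr|.$$
Taking logarithms, dividing by $\log|X|$, and passing to standard parts then gives
$$\pmb{\delta}_X(\varphi(x,y)) \;\leq\; \pmb{\delta}_X(B) + \mathrm{st}\!\left(\frac{\log|\varphi(x,b_+)|}{\log|X|}\right).$$
Since $b_+ \in B$ forces $\varphi(x,b_+) \neq \emptyset$, the hypothesis applies and yields $\pmb{\delta}_X(\varphi(x,b_+)) = r$, completing the upper bound.

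The key step that requires care is the passage from the pointwise (external) assumption ``every nonempty fibre has coarse dimension $r$'' to a uniform bound valid at the internal maximum. My approach sidesteps any compactness/uniformity argument by realising the supremum inside $B$ via Łoś, so that the hypothesis applies to a single fibre $\varphi(x,b_+)$ rather than requiring a uniform bound across all $b \in B$. A minor bookkeeping point is the case $\pmb{\delta}_X(\exists x\,\varphi) = \infty$, where $\log|B|/\log|X|$ is unbounded in $\mathbb{R}^*$; the same inequalities go through with the convention $\infty + r = \infty$, so no separate argument is needed.
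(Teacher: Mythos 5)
Your proof is correct. For the upper bound you use exactly the same mechanism as the paper: realize the supremum of the fibre sizes as an internal point $b_+$ via \L o\'s, bound $|\varphi(M^{m+n})|$ by $|B|\cdot|\varphi(M^m,b_+)|$, and apply the hypothesis to the single fibre at $b_+$. The only place you diverge is the lower bound: the paper runs the dual argument with a coordinatewise \emph{minimizer} over nonempty fibres and sandwiches $|\varphi(M^{m+n})|$ between two products, whereas you instead invoke the subadditivity fact cited in the preliminaries (from \cite[5.1]{hrushovski2012stable}). That is a legitimate shortcut — it makes the $\geq$ direction a one-liner at the cost of importing an external result — while the paper's version, which also needs the internal realization of the infimum, is self-contained and symmetric. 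Either way the $\epsilon$-of-room and the handling of the $\pmb{\delta}_X(\exists x\,\varphi)=\infty$ case are as you describe, so there is no gap.
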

\begin{proof}
Suppose $(M,X)=\prod_{i\in I}(M_i,X_i)/\mathcal{U}$ for some ultrafilter $\mathcal{U}$ on an index set $I$ and $X_i\subseteq M_i$ finite sets. For each $i\in I$ pick $b_i^{max}$ and $b_i^{min}$ in $(M_i)^m$ such that $|\varphi((M_i)^n,b_i^{max})|$ is maximal and $|\varphi((M_i)^n,b_i^{min})|$ is minimal non-zero respectively. Clearly, we have $$|\varphi((M_i)^n,b_i^{min})|\cdot |\exists x\varphi(x,(M_i)^m)|\leq|\varphi((M_i)^{n+m})|\leq |\varphi((M_i)^n,b_i^{max})|\cdot |\exists x\varphi(x,(M_i)^m)|.$$ Let $b^{max}:=(b_i^{\max})_{i\in I}/\mathcal{U}\in M$ and $b^{min}:=(b_i^{min})_{i\in I}/\mathcal{U}\in M$ respectively. By assumption, $\pmb{\delta}_X(\varphi(x,b^{max}))=\pmb{\delta}_X(\varphi(x,b^{min}))=r$. Therefore, for any $\epsilon>0$, there is some $J\in\mathcal{U}$ such that for all $i\in J$, we have $$|X_i|^{r-\epsilon}\leq |\varphi((M_i)^n,b_i^{min})|\leq |\varphi((M_i)^n,b_i^{max})|\leq |X_i|^{r+\epsilon}.$$ Multiplying each term by $|\exists x\varphi(x,(M_i)^m)|$ and combining the inequality before, we get $$|X_i|^{r-\epsilon}\cdot |\exists x\varphi(x,(M_i)^m)|\leq\varphi((M_i)^{n+m})\leq |X_i|^{r+\epsilon}\cdot |\exists x\varphi(x,(M_i)^m)|.$$ Therefore, $$r-\epsilon +\frac{\log |\exists x\varphi(x,(M_i)^m)|}{\log |X_i|}\leq \frac{\log |\varphi((M_i)^{n+m})|}{\log |X_i|}\leq r+\epsilon +\frac{\log |\exists x\varphi(x,(M_i)^m)|}{\log |X_i|}.$$ By the definition of $\pmb{\delta}_X$ we conclude that $$r+\epsilon+\pmb{\delta}_X(\exists x\varphi(x,y))\leq \pmb{\delta}_X(\varphi(x,y))\leq r-\epsilon+\pmb{\delta}_X(\exists x\varphi(x,y)).$$ Since $\epsilon$ is arbitrary, we get the desired result.
\end{proof}

\begin{corollary}\label{cor-int-delta}
Let $M$ be a pseudofinite structure in the language $\mathcal{L}$ and let $X\subseteq M^n$ be an internal set. Suppose there is some $r\in\mathbb{N}$ such that for any $\mathcal{L}$-formula $\varphi(x,y)$ with $|x|=1$ over $\emptyset$ and any $b\in M^{|y|}$, we have $\pmb{\delta}_{X}(\varphi(x,b))\in\{0,1,\ldots,r\}$ and for each $i\leq r$, the set $$\{b\in M^{|y|}:\pmb{\delta}_{X}(\varphi(x,b))=i\}$$ is $\emptyset$-definable. Then for any formula $\psi(x,y)$ and any tuple $c\in M^{|y|}$, we have $$\pmb{\delta}_X(\psi(x,c))\in\{0,\ldots,|x|\cdot r\}.$$ Moreover, $\pmb{\delta}_{X}$ is definable.
\end{corollary}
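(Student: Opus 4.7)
The plan is to prove by induction on $k = |x|$ the strengthened statement: for any $\emptyset$-definable $\psi(x, y)$ with $|x| = k$ and every $b \in M^{|y|}$, we have $\pmb{\delta}_X(\psi(x, b)) \in \{0, 1, \ldots, kr\}$, and for each $j$ the set $\{b : \pmb{\delta}_X(\psi(x, b)) = j\}$ is $\emptyset$-definable. The base case $k = 1$ is exactly the hypothesis.

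For the inductive step, I would fiber $\psi(x_1, \ldots, x_k, y)$ over the last coordinate. Treating $\psi$ as a formula in the single variable $x_k$ with parameters $(x_1, \ldots, x_{k-1}, y)$, the hypothesis in the $|x|=1$ case supplies, for each $i \in \{0, \ldots, r\}$, an $\emptyset$-definable predicate $\chi_i(x_1, \ldots, x_{k-1}, y)$ expressing that the $x_k$-fiber of $\psi$ has coarse dimension exactly $i$. Set $\psi_i := \psi \wedge \chi_i$. By construction every nonempty $x_k$-fiber of $\psi_i(x_1, \ldots, x_k, b)$ has $\pmb{\delta}_X = i$, so Lemma~\ref{lem-subadd}, applied at the fixed parameter $b$, yields
\[
\pmb{\delta}_X(\psi_i(x_1, \ldots, x_k, b)) \;=\; i + \pmb{\delta}_X\bigl(\exists x_k\, \psi_i(x_1, \ldots, x_k, b)\bigr).
\]
Now $\exists x_k\, \psi_i$ is $\emptyset$-definable in $k-1$ free variables with parameter $y$, so the induction hypothesis gives that the second summand takes values in $\{0, \ldots, (k-1)r\}$ and that each level set in $b$ is $\emptyset$-definable. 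Transporting through the displayed identity, $\pmb{\delta}_X(\psi_i(x, b)) \in \{i, \ldots, i + (k-1)r\}$ with $\emptyset$-definable level sets as well.

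Finally, since $\psi(x, b)$ is the disjoint union of the $r+1$ pieces $\psi_i(x, b)$, the coarse dimension of a finite union equals the maximum of the pieces, so $\pmb{\delta}_X(\psi(x, b)) = \max_{i \leq r} \pmb{\delta}_X(\psi_i(x, b)) \in \{0, \ldots, kr\}$. The level set $\{b : \pmb{\delta}_X(\psi(x, b)) = j\}$ is then the finite Boolean combination $\bigcup \bigcap_i \{b : \pmb{\delta}_X(\psi_i(x, b)) = i + j_i\}$ over the tuples $(j_0, \ldots, j_r)$ with $\max_i (i + j_i) = j$, hence $\emptyset$-definable; this closes the induction. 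For the final sentence of the corollary, the bound $|x| \cdot r$ says $\pmb{\delta}_X$ takes finitely many values on any $\emptyset$-definable family, and the $\emptyset$-definability of all level sets is the strong form of continuity present in the equivalent reformulation of definable $\pmb{\delta}_\alpha$ from the preliminaries, so $\pmb{\delta}_X$ is definable. The only delicate point is to verify that Lemma~\ref{lem-subadd} tolerates the auxiliary parameters coming from $b$, but this is transparent from its proof, which is insensitive to parameters; everything else is bookkeeping on top of the hypothesis.
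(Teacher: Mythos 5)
Your proof is correct and takes essentially the same route as the paper: induction on $|x|$, fibering over one coordinate, using the $|x|=1$ hypothesis to obtain $\emptyset$-definable dimension levels (your $\chi_i$, the paper's $\theta_\ell$), applying Lemma~\ref{lem-subadd} to each piece $\psi \wedge \chi_i$, and taking the max over the disjoint decomposition. The choice of projecting along the last variable rather than the first is cosmetic, and your level-set bookkeeping via tuples with $\max_i(i+j_i)=j$ is, if anything, a touch more explicit than the paper's concise disjunction.
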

\begin{proof}
We use induction on the length of $|x|.$ The case $|x|=1$ is given by assumption. 

Suppose the conclusion holds for $|x|=n$, we prove it for $|x|=n+1$. Let $\psi(x_0,\ldots,x_n,y)$ be a formula with $|x_i|=1$ for $0\leq i\leq n$. We know that there are formulas without parameters $\theta_\ell(x_1,\ldots,x_n,y)$ for $\ell\in\{0,1,\ldots,r\}$ which define respectively the sets $$\{(a_1,\ldots,a_n,b)\in M^{n+|y|}: \pmb{\delta}_M(\psi(x_0,a_1,\ldots,a_n,b))=\ell\text{ and }\psi(M,a_1,\ldots,a_n,b)\neq\emptyset\}.$$ 

For any $c\in M^{|y|}$, note that $\psi(M^{n+1},c)$ is the disjoint union of $$\{\psi(M^{n+1},c)\land\theta_{\ell}(M^{n},c):\ell\in\{0,1,\ldots,r\}\},$$ and Lemma \ref{lem-subadd} applies to each of these formulas. Hence, 
\begin{align*}
\pmb{\delta}_X(\psi(x_0,x_1,\ldots,x_n,c)\land\theta_{\ell}(x_1,\ldots,x_n,c))\\
=\ell+\pmb{\delta}_X(\exists x_0(\psi(x_0,x_1,\ldots,x_n,c)\land\theta_{\ell}(x_1,\ldots,x_n,c))\\
=\ell+\pmb{\delta}_X(\theta_{\ell}(x_1,\ldots,x_n,c)).
\end{align*}

By induction hypothesis, $\pmb{\delta}_X(\theta_{\ell}(x_1,\ldots,x_n,c))\in\{0,\ldots,r\cdot n\}$. Therefore, $$\pmb{\delta}_X(\psi(x_0,x_1,\ldots,x_n,c))=\max\{\ell+\pmb{\delta}_X(\theta_{\ell}(x_1,\ldots,x_n,c)):0\leq \ell\leq r\}\in\{0,\ldots,r\cdot(n+1)\}.$$

Again by induction hypotheses, for any $k\in\{0,\ldots,r\cdot n\}$ there are $\emptyset$-definable $\xi_{\ell}^k(y)$ with $\ell\in\{0,\ldots,r\}$, which define the corresponding sets $$\{b\in F^{|y|}:\pmb{\delta}_X(\theta_{\ell}(x_1,\ldots,x_n,b))= k\text{ and }\theta_{\ell}(M^{n},b)\neq\emptyset\}.$$ Then the formula $\displaystyle{\bigvee_{0\leq \ell\leq r,~0\leq j\leq r\cdot n,~\ell+j=t}\xi_{\ell}^j(y)}$ defines the set $$\{b\in M^{n+1}:\pmb{\delta}_M(\psi(x_0,x_1,\ldots,x_n,b))= t\text{ and }\psi(M^{n+1},b)\neq\emptyset\}$$ for any $t\in\{0,\ldots,r\cdot(n+1)\}$.
\end{proof}

\begin{lemma}\label{lem-deffield}
Let $\mathcal{M}=(F,+,\cdot,0,1,\ldots)$ be a pseudofinite field with some extra structures. Let $\pmb{\delta}_F$ be the coarse pseudofinite dimension normalised by $|F|$. Suppose for any formula $\varphi(x,y)$ with $|x|=1$ we have $\pmb{\delta}_F(\varphi(x,b))\in\{0,1\}$ for any tuple $b\in F^{|y|}$. Then $\pmb{\delta}_F$ is definable and for any formula $\psi(x,y)$ and any tuple $c\in F^{|y|}$, we have $\pmb{\delta}_F(\psi(x,c))\in\{0,\ldots,|x|\}$. 
\end{lemma}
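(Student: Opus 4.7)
The plan is to apply Corollary \ref{cor-int-delta} with $X=F$ and $r=1$: once its hypotheses are verified, the corollary directly yields both of the Lemma's conclusions (definability of $\pmb{\delta}_F$ and the bound $\pmb{\delta}_F(\psi(x,c))\in\{0,\ldots,|x|\}$). The integer-value part of the hypothesis is granted by assumption, so what remains is the base case: for each $\mathcal{L}$-formula $\varphi(x,y)$ with $|x|=1$, the two level sets $\{b\in F^{|y|}:\pmb{\delta}_F(\varphi(x,b))=i\}$ ($i\in\{0,1\}$) must be $\emptyset$-definable. Since under the 0/1 hypothesis these sets partition $\{b:\varphi(x,b)\neq\emptyset\}$, it suffices to exhibit an $\emptyset$-definable formula witnessing just one of them, say $U_1(\varphi):=\{b:\pmb{\delta}_F(\varphi(x,b))=1\}$.

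To capture $U_1(\varphi)$ by a first-order formula I would use the field arithmetic through a sum-product-type covering witness. For $n\in\mathbb{N}$, consider
\[
\Xi_n(y):=\forall z\,\exists a_1,b_1,\ldots,a_n,b_n\Bigl(\bigwedge_{i=1}^n \varphi(a_i,y)\wedge\varphi(b_i,y)\wedge z=\sum_{i=1}^n a_i b_i\Bigr).
\]
If $\Xi_n(b)$ holds then $|\varphi(F,b)|^{2n}\ge|F|$, so $\pmb{\delta}_F(\varphi(x,b))\ge 1/(2n)>0$, and the 0/1 hypothesis upgrades this to $\pmb{\delta}_F(\varphi(x,b))=1$. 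For the converse, $\pmb{\delta}_F(\varphi(x,b))=1$ forces $|\varphi(\mathbb{F}_{q_i},b_i)|\ge q_i^{1/2+\epsilon}$ on a set in the ultrafilter for every standard $\epsilon>0$; sum-product expansion theorems in finite fields (Bourgain--Katz--Tao for prime fields, Glibichuk--Konyagin in general $\mathbb{F}_{p^k}$) then produce a uniform standard $n_0=n_0(\varphi)$ for which $\Xi_{n_0}$ is satisfied in each component, and hence in the ultraproduct. The 0/1 hypothesis is used crucially here: a $\pmb{\delta}_F$-dimension-$1$ subset of $F$ cannot be concentrated in any proper subfield (as such subfields have strictly smaller $\pmb{\delta}_F$), which removes the standard subfield obstruction to sum-product in non-prime components.

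Once $U_1(\varphi)=\{b:\Xi_{n_0(\varphi)}(b)\}$ is $\emptyset$-definable, $U_0(\varphi)$ is its complement in $\{b:\varphi(x,b)\neq\emptyset\}$, and the hypotheses of Corollary \ref{cor-int-delta} are fulfilled. The main obstacle is the uniformity of $n_0$: one must extract a single standard integer bound valid across all parameters $b$, which requires combining the 0/1 hypothesis with the sum-product input with some care, and may in practice necessitate augmenting $\Xi_n$ with additive and multiplicative translates to handle configurations where the plain sum-of-products form is not sharp. Apart from this technical step, the rest of the Lemma is a formal reduction to Corollary \ref{cor-int-delta}.
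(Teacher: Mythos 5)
Your reduction to Corollary~\ref{cor-int-delta} with $X=F$ and $r=1$ is exactly the first step of the paper's proof, and you correctly identify that the whole lemma hinges on showing that $\{b:\pmb{\delta}_F(\varphi(x,b))=1\}$ is $\emptyset$-definable for a single variable $x$. After that, however, you take a route that is both substantially heavier and, as you are presenting it, incomplete.

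The direction $\Xi_n(b)\Rightarrow\pmb{\delta}_F\geq 1/(2n)\Rightarrow\pmb{\delta}_F=1$ is sound, but the converse $\pmb{\delta}_F(\varphi(x,b))=1\Rightarrow\Xi_{n_0}(b)$ for a \emph{fixed} standard $n_0$ depending only on $\varphi$ is where the proof has a real gap. The results you cite (Bourgain--Katz--Tao, Glibichuk--Konyagin) are growth estimates of the form $\max(|A+A|,|A\cdot A|)\gtrsim |A|^{1+\epsilon}$; they are not covering statements of the form $nA\cdot A=\mathbb{F}_q$, which is what $\Xi_n$ asserts. A covering theorem of Glibichuk/Hart--Iosevich type would be the right ingredient, and your outline of transferring a fixed $\epsilon_0$-lower-bound on $|\varphi(\mathbb{F}_{q_i},b_i)|/q_i^{1-\epsilon_0}$ to ultrafilter-many components before invoking it is the right shape, but as written the uniformity of $n_0$ is not established and you explicitly flag it as unresolved (``may in practice necessitate augmenting $\Xi_n$\,\dots''). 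That honesty is appropriate: the argument does not yet close.

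The insight you are missing is that one can choose a formula whose \emph{negation} also yields an elementary cardinality bound, so that no external sum--product machinery is needed. The paper uses the ratio-set formula
\[
\theta_{\psi}(y):=\forall z\,\exists x_1\,\exists x_2\,\exists x_3\,\exists x_4
\left(\bigwedge_{1\leq i\leq 4}\psi(x_i,y)\ \land\ x_3\neq x_4\ \land\ z=(x_1-x_2)\cdot(x_3-x_4)^{-1}\right).
\]
If $\theta_\psi(c)$ holds, the map $(x_1,x_2,x_3,x_4)\mapsto (x_1-x_2)(x_3-x_4)^{-1}$ from $\psi(F,c)^4$ to $F$ is onto, giving $\pmb{\delta}_F(\psi(x,c))\geq 1/4$, hence $=1$. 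If $\neg\theta_\psi(c)$ holds, the witnessing $a\in F$ avoided by the ratio set makes $(u,v)\mapsto u+av$ an \emph{injection} from $\psi(F,c)^2$ into $F$, giving $\pmb{\delta}_F(\psi(x,c))\leq 1/2$, hence $=0$. Thus $\theta_\psi$ is a first-order characterisation of dimension $1$, both directions fully elementary and with no pressure on uniformity. Your $\Xi_n$ has no such symmetric failure-to-injectivity reading, which is precisely why you are forced into sum--product territory. So: same reduction framework as the paper, but the crucial $|x|=1$ step needs the ratio-set trick (or at least a correctly-stated covering theorem with the uniformity spelled out) before the proof is complete.
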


\begin{proof}
By Corollary \ref{cor-int-delta}, we only need to show definability when $|x|=1$. 

For each $\psi(x,y)$, consider the formula $$\theta_{\psi}(y):=\forall z\exists x_1\exists x_2\exists x_3\exists x_4~\left(\bigwedge_{1\leq i\leq 4}\psi(x_i,y)~\land~ x_3\neq x_4~\land~ z=(x_1-x_2)\cdot(x_3-x_4)^{-1}\right).$$ We claim that $\theta_{\psi}(c)$ holds if and only if $\pmb{\delta}_F(\psi(x,c))=1$ for all $c\in F^{|y|}$.  Suppose $\theta_{\psi}(c)$ hold. Then there is a map from $(\psi(F,c))^4$ to $F$ defined by sending $(x_1,x_2,x_3,x_4)$ to $(x_1-x_2)(x_3-x_4)^{-1}$ if $x_3\neq x_4$, otherwise we map $(x_1,x_2,x_3,x_4)$ to $0$. The formula $\theta_{\psi}(c)$ holds means exactly that the map is surjective. Therefore, $\pmb{\delta}_F(\psi(x,c))\geq \frac{1}{4}\pmb{\delta}_F(F)=\frac{1}{4}$. By assumption, $\pmb{\delta}_F(\psi(x,c))\in\{0,1\}$. Hence, $\pmb{\delta}_F(\psi(x,c))=1$. On the other hand, if $\neg\theta_{\psi}(c)$ holds, there is $a\in F$ such that for any $x_1,x_2,x_3,x_4\in \psi(F,c)$ we have $a\neq (x_1-x_2)(x_3-x_4)^{-1}$ whenever $x_3\neq x_4$. Let $f:(\psi(F,c))^2\to F$ be defined as $f(x_1,x_2):=x_1+a x_2$. Then $f$ is an injection. Therefore, $\pmb{\delta}_F(\psi(x,c))\leq \frac{1}{2}$. We conclude that $\pmb{\delta}_F(\psi(x,c))=0$. 

Hence, the set $\displaystyle{\{c\in F^{|y|}:\pmb{\delta}_F(\psi(x,c))= 0\text{ and }\psi(F,c)\neq\emptyset\}}$ is defined by $\neg\theta_{\psi}(y)\land\exists x\psi(x,y)$, and $\theta_{\psi}(y)$ defines the set $\displaystyle{\{c\in F^{|y|}:\pmb{\delta}_F(\psi(x,y))= 1\}.}\qedhere$
\end{proof}

\begin{corollary}\label{cor-main}
For any pseudofinite difference field $(F,\mbox{Frob})\in\mathcal{S}$, the coarse dimension $\pmb{\delta}_F$ is definable and integer-valued for all $\mathcal{L}_\sigma$-definable sets. 
Moreover, $\pmb{\delta}_F$ is additive in the language $\mathcal{L}_{\sigma}$.
\end{corollary}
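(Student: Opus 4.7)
The plan is to assemble this corollary directly from the ingredients already proved earlier in the section, with essentially no new work required. Theorem \ref{th1} supplies the integer-valuedness: for every $\mathcal{L}_\sigma$-formula $\varphi(x,y)$ and every parameter $a\in F^{|y|}$, the proof there actually produces a specific $d\in\{0,1,\ldots,|x|\}$ with $\pmb{\delta}_F(\varphi(x,a))=d$. In particular, specialising to the case $|x|=1$, we get $\pmb{\delta}_F(\varphi(x,a))\in\{0,1\}$ for every unary $\mathcal{L}_\sigma$-formula with parameters. This is precisely the hypothesis of Lemma \ref{lem-deffield}, so invoking that lemma in the expanded language $\mathcal{L}_\sigma$ (which is a field with extra structure, as required) yields that $\pmb{\delta}_F$ is definable and the previously observed bound $\pmb{\delta}_F(\psi(x,c))\in\{0,\ldots,|x|\}$ is recovered uniformly in definable families.

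For the additivity clause I would argue as follows. Definability of $\pmb{\delta}_F$ in the sense of the definition given in the preliminaries implies in particular that $\pmb{\delta}_F$ is continuous: for any $\emptyset$-definable $\phi(x,y)$ and any $r_1<r_2$, the set of $b$ with $\pmb{\delta}_F(\phi(x,b))\le r_1$ is already $\emptyset$-definable and sits inside $\{b:\pmb{\delta}_F(\phi(x,b))<r_2\}$. Once continuity is in hand, the cited Fact \cite[Lemma 2.10]{hrushovski2013pseudo} gives additivity of $\pmb{\delta}_F$ on types, i.e.\ $\pmb{\delta}_F(a,b/A)=\pmb{\delta}_F(a/A,b)+\pmb{\delta}_F(b/A)$ for all $a,b$ and $A\subseteq F$.

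There is no real obstacle here; the only small point to be careful about is that Lemma \ref{lem-deffield} was stated for a pseudofinite field $\mathcal{M}=(F,+,\cdot,0,1,\ldots)$ with additional structure, and one should check that the proof goes through with the Frobenius added, but since the entire argument of that lemma uses only the field operations (the map $(x_1,x_2,x_3,x_4)\mapsto (x_1-x_2)(x_3-x_4)^{-1}$ and the injection $(x_1,x_2)\mapsto x_1+ax_2$), it applies verbatim to $(F,\mbox{Frob})$. The potentially more delicate point, that $\pmb{\delta}_F$ really does take only values in $\{0,1\}$ on unary formulas with parameters, is exactly the content of Theorem \ref{th1} applied with $|x|=1$, so nothing further is needed.
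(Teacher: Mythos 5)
Your proposal is correct and follows essentially the same route as the paper: apply Theorem \ref{th1} with $|x|=1$ to get $\pmb{\delta}_F(\varphi(x,a))\in\{0,1\}$, then invoke Lemma \ref{lem-deffield}. The extra detail you spell out for additivity (definability implies continuity by definition, and continuity gives additivity by Fact~\ref{fact0}'s neighbour, \cite[Lemma 2.10]{hrushovski2013pseudo}) is exactly what the paper's terse proof leaves implicit, and your check that Lemma \ref{lem-deffield} applies verbatim to the difference field is a reasonable sanity check.
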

\begin{proof}
By Theorem \ref{th1}, for any $\mathcal{L}_\sigma$-formula $\psi(x,y)$ with $|x|=1$, any $b\in F^{|y|}$ we have $$\pmb{\delta}_F(\psi(x,b))\in\{0,1\}.$$ Applying Lemma \ref{lem-deffield} we get the desired result.
\end{proof}

\noindent \emph{Remark:} In general, the coarse dimension does not have the property that a definable set has dimension 0 if only if it is finite. Similarly, in a pseudofinite group, a subgroup of infinite index does not necessarily have smaller dimension, as we show in the next example.

\begin{example}
Let $(F,\mbox{Frob})=\prod_{p\in\mathbb{P}}(\mathbb{F}_{p^{k_p}},\mbox{Frob}_p)/\mathcal{U}\in\mathcal{S}$. Define a function $f:F^{\times}\to F^{\times}$ as $$f(x):=x^{-1}\cdot\mbox{Frob}(x).$$ It is easy to see that $f$ is a group homomorphism. Therefore, the image $T:=f(F^{\times})$ is a definable subgroup of $F^{\times}$. There is a corresponding $f_p:(\mathbb{F}_{p^{k_p}})^{\times}\to (\mathbb{F}_{p^{k_p}})^\times$ and $T_p:=f_p((\mathbb{F}_{p^{k_p}})^\times)$ for any $p\in \mathbb{P}$. Since the kernel of $f_p$ is $(\mathbb{F}_{p})^{\times}$, we get $[(\mathbb{F}_{p^{k_p}})^\times:T_p]=p-1$. Hence, $T$ has infinite index in $F^\times$, though $\pmb{\delta}_F(T)=\pmb{\delta}_F(F^{\times})$.
\end{example}

\section{Coarse dimension and transformal transcendence degree}\label{sec3}

In the following, we will study some algebraic properties of difference fields that are intrinsic to the coarse dimension $\pmb{\delta}_F$. Our aim is to understand the theory of difference fields in $\mathcal{S}$ in terms of $\pmb{\delta}_F$.

In model theory, we always understand definable sets or definable structures ``up to a finite noise''. For example, strongly minimal theories are considered transparent since every definable subset is either finite or cofinite. And in groups, people always go to a definable subgroup of finite index freely. As we will see in the next section, in any member of $\mathcal{S}$ there is a uniformly definable family of sets of coarse dimension 0 that contains any internal subset of a fixed infinite definable set. Hence, up to a finite noise, the family still has all the wild phenomena that should not appear in a ``nice'' structure. However, it seems that coarse dimension 0 sets are the only true obstacle of tameness. In other words, it is possible that all definable sets and definable structures of $\mathcal{S}$ are tame ``up to a noise of coarse dimension 0''. This section will provide some positive evidence of this point of view. Basically, we want to associate the coarse dimension of a tuple with the transformal transcendence degree of it. And if we can do this, then the quantifier-free type of a tuple will determine the coarse dimension of this tuple, which will imply that for any definable set of dimension $n$, there is a quantifier-free definable set of the same dimension such that their intersection also has dimension $n$. Thus, definable sets can be understood by quantifier-free definable ones ``up to coarse dimension 0''. 
%In terms of definable structures, for example, definable groups, things get more tricky, as we also want to understand the definable group operation ``up to coarse dimension 0''. This is not totally inconceivable if we could build the connection between coarse dimension and transformal transcendence degree, as the latter is the infinite part of the SU-rank in ACFA, and the group configuration theorem is available in ACFA. 
%However, we could not build the full link, we can only prove that these two dimensions coincide for existential formulas. To extend this result in full generality, we expect a form of partial quantifier-elimination is needed.

Let us start with an observation. Given $(F,\mbox{Frob})=(\mathbb{F}_{p^{k_p}},\mbox{Frob}_p)/\mathcal{U}\in\mathcal{S}$. Let $$(\tilde{F},\mbox{Frob}):=\prod_{p\in \mathbb{P}}(\tilde{\mathbb{F}}_{p},\mbox{Frob}_p)/\mathcal{U},$$ then by \cite[Theorem 1.4]{hrushovski2004elementary} we have $(\tilde{F},\mbox{Frob})$ is a model of ACFA, which contains $(F,\mbox{Frob})$ as a substructure. 

In ACFA, there is a notion of dimension which is also integer-valued, and it is induced by SU-rank.

\begin{definition}
Let $\mathbf{k}$ be a saturated model of ACFA.
Let $a$ be a finite tuple in $\mathbf{k}$ and $A\subseteq \mathbf{k}$. Then $\mbox{SU}(a/A)=\omega\cdot k+n$ for some $0\leq k\leq |a|$.  Define the \textsl{rank-dimension} $\mbox{dim}_{rk}$ of $\mbox{tp}(a/A)$ as  $\mbox{dim}_{rk}(a/A):=k$. 

For a partial type $\pi(x)$ with parameters $A$. Let 
$$d:=\max\{\mbox{dim}_{rk}(a/A): \mathbf{k}\models\pi(a)\}
=\max\{n\leq |x|:\mbox{SU}(a/A)=\omega\cdot n+m,\mathbf{k}\models\pi(a)\}.$$
We define $\mbox{dim}_{rk}(\pi(x)):=d$.
\end{definition}

\noindent \emph{Remark:} $\mbox{dim}_{rk}(a/A)$ coincides with the transformal transcendence degree of $a$ over $A_\sigma$ (the difference field generated by $A$). By \cite[Chapter 5, Theorem 3]{cohn1965difference}, the transformal transcendence degree of $a$ over $(A_\sigma)^{alg}$ (the difference field algebraic closure) is the same as that over $A_\sigma$. Therefore, $\mbox{dim}_{rk}(a/A)$ is determined by the quantifier-free type of $a$ over $A$.

Now we have two integer-valued additive dimensions on types: the rank-dimension $\mbox{dim}_{rk}$ and the coarse dimension $\pmb{\delta}_F$. It is natural to ask whether they coincide. One of the inequalities is obvious.
\begin{lemma}\label{lem01}
Let $(F,\mbox{Frob})\in\mathcal{S}$. For any tuple $a\in F$ and subset $A\subseteq F$ we have $\pmb{\delta}_F(a/A)\leq\mbox{dim}_{rk}(a/A)$.
\end{lemma}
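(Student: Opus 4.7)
The plan is to reduce, via additivity of both dimensions, to the case $|a|=1$, and in that case exhibit a quantifier-free witness whose coarse dimension is zero.

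First, by Corollary~\ref{cor-main}, $\pmb{\delta}_F$ is definable and hence continuous, so additive in the sense $\pmb{\delta}_F(a,b/A) = \pmb{\delta}_F(a/Ab) + \pmb{\delta}_F(b/A)$. The rank-dimension $\dim_{rk}$ is additive as well, being the transformal transcendence degree, which telescopes along extensions of difference fields. Writing $a = (a_1, \ldots, a_n)$ and iterating both identities along the chain $A \subseteq Aa_1 \subseteq \cdots \subseteq Aa_1\cdots a_n$, it suffices to prove $\pmb{\delta}_F(b/B) \leq \dim_{rk}(b/B)$ for a single element $b \in F$ and a parameter set $B \subseteq F$.

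If $\dim_{rk}(b/B) = 1$ there is nothing to prove, since $\pmb{\delta}_F(b/B) \leq 1$ by Corollary~\ref{cor-main}. So assume $\dim_{rk}(b/B) = 0$, i.e.\ $b$ is transformally algebraic over $B_\sigma$; I must show $\pmb{\delta}_F(b/B) = 0$. Pick a non-zero difference polynomial $P(x)$ with coefficients in a finite subset of $B_\sigma$ satisfying $P(b) = 0$. The equation $\varphi(x) := \big(P(x) = 0\big)$ is a quantifier-free $\mathcal{L}_\sigma$-formula over finitely many parameters from $B$ that lies in $\mathrm{tp}(b/B)$, so $\pmb{\delta}_F(b/B) \leq \pmb{\delta}_F(\varphi(F))$, reducing the task to bounding the right-hand side.

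Let $k$ be the order of $P$ in $\sigma$ and $e$ its degree in $\sigma^k(x)$. Under the translation of Definition~\ref{def-phip}, $\varphi_p(x)$ is the univariate ring-polynomial $Q_p(x)$ obtained by substituting $\sigma^i(x) \mapsto x^{p^i}$. For $\mathcal{U}$-almost all primes $p$, the top monomial of $Q_p$ has degree $e\cdot p^k$, strictly dominating the degrees from lower-order terms (which are bounded by a constant times $p^{k-1}$), and it carries a coefficient that is non-zero in $\mathbb{F}_{p^{k_p}}$ because its ultraproduct is the non-zero top coefficient of $P$ in $F$. Hence $Q_p$ is a non-trivial univariate polynomial, so $\varphi_p(\mathbb{F}_{p^{k_p}})$ is a finite set and has ring-theoretic (Lang--Weil) dimension $0$. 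Theorem~\ref{th1} then yields $\pmb{\delta}_F(\varphi(F)) = 0$, whence $\pmb{\delta}_F(b/B) = 0$. The only delicate point is verifying that $Q_p$ is not annihilated by cancellation at the component level — one must track the leading data of $P$ (order, top degree, leading coefficient) across the ultraproduct so that the monomial $c_p x^{e p^k}$ survives on an $\mathcal{U}$-large set of primes; once this is in hand, Theorem~\ref{th1} supplies the conclusion.
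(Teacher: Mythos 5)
Your proposal is correct and follows essentially the same route as the paper: reduce via additivity to a single element, pass to a non-trivial difference polynomial witnessing $\deg_\sigma(a/A)<\infty$, and observe that its Frobenius specialization is a non-zero univariate polynomial of degree polynomial in $p$ (hence with a negligible number of roots). The only cosmetic difference is that the paper directly bounds the number of roots by $p^{Cm}$ and concludes $\pmb{\delta}_F=0$ without detouring through Theorem~\ref{th1}, and the non-degeneracy of the specialization is glossed over in both.
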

\begin{proof}
Note that by the additivity of both $\mbox{dim}_{rk}$ and $\pmb{\delta}_F$, we only need to prove the inequality when $a$ is a single element. We may assume that $A=A_{\sigma}$. By \cite{chatzidakis1999model}, we know that $SU(a/A)=\omega$ if and only if $a$ is transformally transcendental over $A$ if and only if $deg_{\sigma}(a/A)=\infty$. Therefore, we need to show that if $deg_{\sigma}(a/A)<\infty$ then $\pmb{\delta}_F(a/A)=0$.

Suppose $deg_{\sigma}(a/A)<\infty$. Then there is some $m$ and a non-trivial polynomial $f(x;y_1,\ldots,y_m)$ with coefficients in $A$, such that  $f(\sigma^m(a);\sigma^{m-1}(a),\ldots,a)=0$. Take any prime $p\in\mathbb{P}$ and let $g_p(x):=f(x^{p^m};x^{p^{m-1}},\ldots,x)$. Then $$|\{a'\in\mathbb{F}_{p^{k_p}}:g_p(a')=0\}|\leq p^{C\cdot m}$$ for some constant $C$ depending on $f$. Let $\varphi(x):=f(\sigma^m(x);\sigma^{m-1}(x),\ldots,x)=0$. Then $\varphi(x)$ defines exactly the set of zeros of $g_p$ in $(\mathbb{F}_{p^{k_p}},\mbox{Frob}_p)$. Therefore, $\pmb{\delta}_F(\varphi(x))=0$. As $a\in \varphi(F)$, we get $\pmb{\delta}_F(a/A)=0$.
\end{proof}

We conjecture that in general the two dimensions coincide. However, even in the case of quantifier-free types, it is not clear to the author. If it were true for quantifier-free types, we are able to extend the equivalence to existential types.

We first state the conjecture for quantifier-free types here:

\begin{conj}\label{conj}
Let $(F,\mbox{Frob})\in\mathcal{S}$. For any countable set $A\subseteq F$ and complete quantifier-free $\mathcal{L}_\sigma$-type $p(x)\in S^{qf}_n(A)$, the following holds $$\pmb{\delta}_F(p(x))=\mbox{dim}_{rk}(p(x)).$$
\end{conj}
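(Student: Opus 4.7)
The $\leq$ direction is Lemma \ref{lem01}, so my goal is $\pmb{\delta}_F(p) \geq \mbox{dim}_{rk}(p)$. First I would reduce to a single variable via additivity. For $\pmb{\delta}_F$ this is Corollary \ref{cor-main}; combined with the remark after the definition of $\pmb{\delta}_F$ on partial types, I may choose a complete type $q \supseteq p$ realising the infimum $\pmb{\delta}_F(p) = \pmb{\delta}_F(q)$, and $\mbox{dim}_{rk}(q) = \mbox{dim}_{rk}(p)$ since $\mbox{dim}_{rk}$ only sees the quantifier-free part. For a realisation $(a_1,\ldots,a_n) \models q$, both quantities decompose termwise along $A \subseteq A a_1 \subseteq \cdots \subseteq A a_1 \cdots a_n$, so it is enough to prove: for single $a \in F$ and countable $B \subseteq F$, $\pmb{\delta}_F(a/B) \geq \mbox{dim}_{rk}(a/B)$. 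Both quantities lie in $\{0,1\}$ (by Theorem \ref{th1} and by the definition of transformal transcendence degree), and the only nontrivial case is $a$ transformally transcendental over $B$, where I must show $\pmb{\delta}_F(a/B) = 1$.

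Taking the contrapositive, suppose $\psi(x,\bar{c})$ is an $\mathcal{L}_\sigma$-formula with parameters $\bar{c}$ in $B$, that $a \in \psi(F,\bar{c})$, and that $\pmb{\delta}_F(\psi(x,\bar{c})) = 0$; I want to deduce that $a$ is transformally algebraic over $B$. Applying Fact \ref{fact0} to the ring formula $\psi_p(x,\bar{c}_p)$ obtained via the Frobenius substitution $\sigma(t) \mapsto t^p$ (Definition \ref{def-phip}), the vanishing of the coarse dimension forces, for $\mathcal{U}$-many $p$, that $\psi_p(\mathbb{F}_{p^{k_p}},\bar{c}_p)$ is finite, of Lang-Weil size $\mu_p + O(p^{-k_p/2})$. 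In particular each element is a root of some polynomial $P_p(x) \in \mathbb{F}_{p^{k_p}}[x]$ derived from $\psi_p$, and the real task is to upgrade this sequence of $p$-dependent algebraic relations into a single nontrivial difference polynomial over $B$ satisfied by $a$.

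For this upgrade I would pursue two complementary routes. Route A is model-theoretic: embed $(F,\mbox{Frob})$ into $(\tilde{F},\mbox{Frob}) := \prod_{p\in\mathbb{P}}(\tilde{\mathbb{F}}_p,\mbox{Frob}_p)/\mathcal{U}$, a model of ACFA by \cite{hrushovski2004elementary}. Each $\psi_p(\tilde{\mathbb{F}}_p,\bar{c}_p)$ remains finite (the dimension of a constructible set does not change under algebraically closed extensions), so $\psi(\tilde{F},\bar{c})$ is internally hyperfinite; I would then try to show such sets lie in $\mbox{acl}_{\text{ACFA}}(B) = (B_\sigma)^{alg}$, which coincides with the transformal algebraic closure of $B$. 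Route B is combinatorial: use the definability of $\pmb{\delta}_F$ (Corollary \ref{cor-main}) together with the fixed $\sigma$-complexity of $\psi$ to extract a uniform integer $k$ and a polynomial $f(x_0,\ldots,x_k;\bar{y})$ of bounded degree such that $f(x,\sigma x,\ldots,\sigma^k x;\bar{c}) = 0$ holds throughout $\psi(F,\bar{c})$.

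The central obstacle, and indeed the reason this remains a conjecture, is precisely this passage from $p$-dependent witnesses to a single difference polynomial. In Route A, a hyperfinite internal set in an $\aleph_1$-saturated model of ACFA may have genuinely nonstandard cardinality and need not consist of model-theoretically algebraic elements: the sizes $\mu_p$ and the degrees of the minimal polynomials of $a_p$ over $\bar{c}_p$ may both grow with $p$, since each application of $\sigma$ contributes a factor of $p$ to the substituted degree. In Route B, definability of $\pmb{\delta}_F$ alone is too weak, because the theory of $(F,\mbox{Frob})$ has TP2 and the strict order property (Theorem \ref{thm-sop}), blocking any direct appeal to stability-theoretic stabilizer or dividing machinery. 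A successful attack will most likely need a more delicate use of the Chatzidakis--van den Dries--Macintyre classification inside each $\mathbb{F}_{p^{k_p}}$, tracked against the specific $p^k$-exponent structure produced by the Frobenius substitution, so as to extract a $\sigma$-polynomial relation uniform in $p$.
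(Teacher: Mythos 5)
The statement you are addressing is labelled \texttt{conj} in the source and is given as a \emph{Conjecture} in Section \ref{sec3}; the paper does not prove it, and the author explicitly says ``even in the case of quantifier-free types, it is not clear to the author.'' So there is no paper proof to compare against. Your write-up is, appropriately, not a proof but an analysis of where the difficulty lies, and on the main point you are squarely correct: the crux is the passage from $p$-dependent finiteness witnesses (the $\mathcal{L}$-formulas $\psi_p$, whose degrees grow with $p$ because each occurrence of $\sigma$ becomes a $p$-th power) to a single difference polynomial over $B$. Your ``Route A/Route B'' discussion is sensible, and your Route B is in the same spirit as the paper's own remark in Section \ref{sec4} (item 2), which reduces the conjecture to whether, for a difference equation $E(x)=0$, the Frobenius specialization $E_p(x)=0$ is absolutely irreducible for almost all $p$ --- and points out this can fail.

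Two things worth flagging. First, there is a gap in your reduction to a single variable. Passing from the complete quantifier-free type $p$ to a complete type $q\supseteq p$ with $\pmb{\delta}_F(q)=\pmb{\delta}_F(p)$ and then decomposing $\pmb{\delta}_F(q)$ and $\mbox{dim}_{rk}(q)$ along the chain $A\subseteq Aa_1\subseteq\cdots$ uses additivity of $\pmb{\delta}_F$ for \emph{full} types. The terms in that decomposition, $\text{tp}(a_i/Aa_{<i})$, are arbitrary complete types, so you would need $\pmb{\delta}_F(r)\geq \mbox{dim}_{rk}(r)$ for complete types $r$ in one variable, not just quantifier-free ones. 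That full-type statement is strictly stronger than the conjecture (the inf defining $\pmb{\delta}_F(r)$ ranges over more formulas than $\pmb{\delta}_F(\text{qftp}(r))$, so $\pmb{\delta}_F(r)\leq\pmb{\delta}_F(\text{qftp}(r))$, while $\mbox{dim}_{rk}$ coincides on both). Your reduction trades the conjecture for a harder one, which is fine if the aim is to show the full-type version is what one should really target, but it is not a genuine simplification.

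Second, the paper does prove a nontrivial piece of this conjecture that your proposal does not mention: Lemma \ref{lem-dimsigma} handles quantifier-free formulas in a single variable, using Ryten's version of Hrushovski's twisted Lang--Weil estimate (Fact \ref{fact1}) and the definability of $\sigma$-degree in ACFA. Its argument is precisely a successful instance of your Route A: the finiteness of $\varphi_p(\tilde{\mathbb{F}}_p,b_p)$ for $\mathcal{U}$-many $p$ forces, via Fact \ref{fact1}, a single bounded $d$ such that $\deg_\sigma(\varphi(x,b_p))=d$, whence $\deg_\sigma(\varphi(x,b))=d$ in $(\tilde{F},\mbox{Frob})$. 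The obstacle you correctly anticipate (that the Lang--Weil constants and $\mu_p$ may drift with $p$) is tamed there precisely because Fact \ref{fact1} has a bound uniform in $q$, rather than the characteristic-$p$ estimate of Fact \ref{fact0}. Whether this can be pushed beyond one variable is exactly what remains open.
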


In the following we will show that if Conjecture \ref{conj} is true, then the same holds for existential types.

\begin{lemma}\label{lem-conj}
Let $(F,\mbox{Frob})\in\mathcal{S}$ and $A\subseteq F$ a countable set. Let $r(x,y)$ be a complete quantifier-free $\mathcal{L}_\sigma$-type over $A$. Suppose $\pmb{\delta}_{F}(\exists y~\! r(x,y))=0$ then $\mbox{dim}_{rk}(\exists y~\! r(x,y))=0$.
\end{lemma}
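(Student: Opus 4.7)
The plan is to reduce the statement to Conjecture \ref{conj} applied to the complete quantifier-free type $r(x,y)$ itself. The crucial point is that because $r$ is complete quantifier-free, it pins down both the quantifier-free type of the projection in $x$ and that of the fibre over any given $a$, which makes the additivity of the transformal transcendence degree line up cleanly against the additivity of $\pmb{\delta}_F$.

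Let $r|_x(x)$ denote the restriction of $r(x,y)$ to quantifier-free formulas in the variables $x$; this is itself a complete quantifier-free type over $A$. For any $a$ realising $\exists y\, r(x,y)$ in a saturated elementary extension of $F$, the quantifier-free type of $a$ over $A$ contains $r|_x$ and hence equals it by completeness, so by the remark following the definition of $\mbox{dim}_{rk}$ we get $\mbox{dim}_{rk}(a/A)=\mbox{dim}_{rk}(r|_x)$. Consequently $\mbox{dim}_{rk}(\exists y\, r(x,y))=\mbox{dim}_{rk}(r|_x)$, and it suffices to show $\mbox{dim}_{rk}(r|_x)=0$.

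Fix a realisation $(a,b)\models r$ in a saturated extension. Since $r$ is complete quantifier-free, the quantifier-free type of $b$ over $Aa$ is determined by $r$, and so $d:=\mbox{dim}_{rk}(b/Aa)$ is independent of the choice of $(a,b)$. Additivity of the transformal transcendence degree then yields $\mbox{dim}_{rk}(r)=\mbox{dim}_{rk}(r|_x)+d$. On the coarse side, additivity of $\pmb{\delta}_F$ from Corollary \ref{cor-main} gives
\[
\pmb{\delta}_F(ab/A)=\pmb{\delta}_F(a/A)+\pmb{\delta}_F(b/Aa)\leq 0+d=d,
\]
using $\pmb{\delta}_F(a/A)\leq\pmb{\delta}_F(\exists y\, r(x,y))=0$ by hypothesis and $\pmb{\delta}_F(b/Aa)\leq\mbox{dim}_{rk}(b/Aa)=d$ from Lemma \ref{lem01}. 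This bound holds for every realisation of $r$, and by the remark that a partial type is realised by a completion of the same coarse dimension we conclude $\pmb{\delta}_F(r)\leq d$. Now applying Conjecture \ref{conj} to the complete quantifier-free type $r(x,y)$ gives $\pmb{\delta}_F(r)=\mbox{dim}_{rk}(r)=\mbox{dim}_{rk}(r|_x)+d$, and combined with $\pmb{\delta}_F(r)\leq d$ this forces $\mbox{dim}_{rk}(r|_x)=0$.

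I expect the only real technical point to be the bookkeeping around completeness: verifying that $d=\mbox{dim}_{rk}(b/Aa)$ is genuinely independent of $(a,b)$ (which is exactly where completeness of $r$ enters, via the fact that transformal transcendence degree is determined by quantifier-free type) and that Lemma \ref{lem01} and additivity of $\pmb{\delta}_F$ apply to tuples living in a saturated extension of $F$. Both are essentially routine given the remarks earlier in the paper; once Conjecture \ref{conj} is invoked as a black box, the remainder is a direct comparison of the two additive dimensions.
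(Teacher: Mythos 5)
Your proof is correct, and it is a genuine streamlining of the paper's argument. The paper splits into two cases according to whether $\mbox{dim}_{rk}(r)=\mbox{dim}_{rk}(p)+\mbox{dim}_{rk}(q)$ (where $p,q$ are the restrictions of $r$ to the $x$ and $y$ variables) or $\mbox{dim}_{rk}(r)<\mbox{dim}_{rk}(p)+\mbox{dim}_{rk}(q)$; in the first case it invokes Conjecture~\ref{conj} on $r$ together with subadditivity of $\pmb{\delta}_F$ over the fibres $r(a,y)$, and in the second case it invokes the conjecture on $q$ to derive a contradiction. Your argument dispenses with the dichotomy entirely: you fix $d=\mbox{dim}_{rk}(b/Aa)$ (constant by completeness of $r$ and additivity of $\mbox{dim}_{rk}$), derive $\pmb{\delta}_F(r)\leq d$ directly from the additivity of $\pmb{\delta}_F$, the hypothesis $\pmb{\delta}_F(\exists y\, r)=0$, and Lemma~\ref{lem01}, and then compare with the single application of Conjecture~\ref{conj} to $r$, which gives $\pmb{\delta}_F(r)=\mbox{dim}_{rk}(r)=\mbox{dim}_{rk}(r|_x)+d$; subtracting $d$ finishes. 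This replaces the paper's subadditivity-over-fibres step and the case-2 contradiction with one clean inequality and uses the conjecture only once. Your flagged technical points are handled correctly: constancy of $d$ follows since $\mbox{dim}_{rk}$ is determined by quantifier-free type and is additive (so $d=\mbox{dim}_{rk}(r)-\mbox{dim}_{rk}(r|_x)$), and the countability of $A$ together with $\aleph_1$-saturation of $F$ ensures the realisations you need sit inside $F$ itself rather than in a proper elementary extension, so Lemma~\ref{lem01} and Corollary~\ref{cor-main} apply as stated. One cosmetic remark: phrase the realisation argument inside $F$ directly rather than via ``a saturated elementary extension,'' since $\pmb{\delta}_F$ only makes sense on $F$-definable sets and $F$-types.
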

\begin{proof}
Let $p(x)$ and $q(y)$ be the restrictions of $r(x,y)$ to $x$ and $y$ variables respectively. Note that since $r(x,y)$ is a complete quantifier-free type, then for any $a,b$ such that $(\tilde{F},\mbox{Frob})\models r(a,b)$, we have $\mbox{dim}_{rk}(r(x,y))=\mbox{dim}_{rk}(a,b/A)$ and $$\mbox{dim}_{rk}(a,b/A)=\mbox{dim}_{rk}(b/a,A)+\mbox{dim}_{rk}(a/A)=\mbox{dim}_{rk}(r(a,y))+\mbox{dim}_{rk}(p(x)).$$
We distinguish two different cases. The first case is that $\mbox{dim}_{rk}(r(x,y))=\mbox{dim}_{rk}(p(x))+\mbox{dim}_{rk}(q(y))$. Then for any $a,b$ such that $(\tilde{F},\mbox{Frob})\models r(a,b)$, we have 
\begin{align*}
\mbox{dim}_{rk}(p(x))+\mbox{dim}_{rk}(q(y))=\mbox{dim}_{rk}(r(x,y))\\
=\mbox{dim}_{rk}(a,b/A)=\mbox{dim}_{rk}(r(a,y))+\mbox{dim}_{rk}(p(x))
\end{align*} We conclude $\mbox{dim}_{rk}(r(a,y))=\mbox{dim}_{rk}(q(y))$ for any $a$ such that there is $b$ with $(\tilde{F},\mbox{Frob})\models r(a,b)$.

The second case is that $\mbox{dim}_{rk}(r(x,y))<\mbox{dim}_{rk}(p(x))+\mbox{dim}_{rk}(q(y))$. Then $$\mbox{dim}_{rk}(r(a,y))<\mbox{dim}_{rk}(q(y))$$ for any $a$ such that there is $b$ with $(\tilde{F},\mbox{Frob})\models r(a,b)$.

In the first case, as $\mbox{dim}_{rk}(r(a,y))$ is a constant for any $a$ such that there is $b$ with $(\tilde{F},\mbox{Frob})\models r(a,b)$, by additivity of $\mbox{dim}_{rk}$, we have $$\mbox{dim}_{rk}(r(x,y))=\mbox{dim}_{rk}(r(a,y))+\mbox{dim}_{rk}(\exists y~\! r(x,y)).$$ 
On the other hand, by subadditivity of $\pmb{\delta}_F$, we have
\begin{align*}
\pmb{\delta}_F(r(x,y))&\leq \pmb{\delta}_F(\exists y~\! r(x,y))+\max\{\pmb{\delta}_F(r(a,y)):a\in F,\text{ exists }b\in F, (F,\mbox{Frob})\models r(a,b)\}\\
&=\max\{\pmb{\delta}_F(r(a,y)):a\in F,\text{ exists }b\in F, (F,\mbox{Frob})\models r(a,b)\}
\end{align*}
Take $a'\in F$ such that there is $b'\in F$ with $(F,\mbox{Frob})\models r(a',b')$ and $\pmb{\delta}_F(r(a',y))$ reaches the maximal value. Then $\pmb{\delta}_F(r(x,y))\leq\pmb{\delta}_F(r(a',y))$. Since the other direction of the inequality always holds, we get $\pmb{\delta}_F(r(x,y))=\pmb{\delta}_F(r(a',y))$. Now applying Conjecture \ref{conj} to the type $r(x,y)$, we have
\begin{align*}
\mbox{dim}_{rk}(r(a',y))+\mbox{dim}_{rk}(\exists y~\! r(x,y))=\mbox{dim}_{rk}(r(x,y))\\
=\pmb{\delta}_F(r(x,y))=\pmb{\delta}_F(r(a',y))\leq \mbox{dim}_{rk}(r(a',y)).
\end{align*}
Therefore, $\mbox{dim}_{rk}(\exists y~\! r(x,y))=0$ as desired.

Recall that in the second case we have $\mbox{dim}_{rk}(r(a,y))<\mbox{dim}_{rk}(q(y))$ for any $a,b$ with $(\tilde{F},\mbox{Frob})\models r(a,b)$. By applying conjecture \ref{conj} to $q(y)$, we have
\begin{align*}
\mbox{dim}(q(y))&=\pmb{\delta}_F(q(y))\leq \pmb{\delta}_F(r(x,y))=\max\{\pmb{\delta}_F(ab/A):(F,\mbox{Frob})\models r(a,b)\}\\
&=\max\{\pmb{\delta}_F(b/A,a)+\pmb{\delta}_F(a/A):(F,\mbox{Frob})\models r(a,b)\}\\
&=\max\{\pmb{\delta}_F(b/A,a):(F,\mbox{Frob})\models r(a,b)\} \text{ (because }\exists y~\! r(x,y)\in\mbox{tp}(a/A))\\
&\leq \max\{\mbox{dim}_{rk}(b/A,a): (\tilde{F},\mbox{Frob})\models r(a,b)\}\\
&\leq \mbox{dim}_{rk}(r(a,y)) ~(\text{where there is }b \text{ with } (\tilde{F},\mbox{Frob})\models r(a,b))\\
&<\mbox{dim}_{rk}(q(y)),
\end{align*}
a contradiction.
\end{proof}

\begin{theorem}\label{thm02}
Let $r(x,y)$ be a quantifier-free complete type over $A\subseteq F$. Suppose Conjecture \ref{conj} holds. Then $$\pmb{\delta}_F(\exists y~\! r(x,y))=\mbox{dim}_{rk}(\exists y~\! r(x,y)).$$
\end{theorem}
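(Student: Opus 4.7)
The plan is to prove both inequalities. The direction $\pmb{\delta}_F(\exists y~\! r(x,y)) \leq \mbox{dim}_{rk}(\exists y~\! r(x,y))$ will come essentially for free from Lemma~\ref{lem01}: using the Remark after the definition of $\pmb{\delta}_\alpha$ on partial types, I pick a complete type extending $\exists y~\! r(x,y)$ that achieves the infimum $\pmb{\delta}_F(\exists y~\! r(x,y))$, take a realisation $a$ of it, and apply Lemma~\ref{lem01} to conclude $\pmb{\delta}_F(\exists y~\! r) = \pmb{\delta}_F(a/A) \leq \mbox{dim}_{rk}(a/A) \leq \mbox{dim}_{rk}(\exists y~\! r)$.

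The real content is the reverse inequality. Write $p(x) := r \upharpoonright x$. I would first observe that $\mbox{dim}_{rk}(\exists y~\! r(x,y)) = \mbox{dim}_{rk}(p(x))$: the $\leq$ direction is immediate from $\exists y~\! r(x,y) \vdash p(x)$, while the $\geq$ direction uses that in a sufficiently saturated model of ACFA any realisation of $p$ extends to one of $r$, because $r$ is a consistent complete quantifier-free type. The key step is to apply Conjecture~\ref{conj} to the full quantifier-free type $r(x,y)$ itself, giving $\pmb{\delta}_F(r) = \mbox{dim}_{rk}(r)$.

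Next I would pick a complete type $r^*$ extending $r$ with $\pmb{\delta}_F(r^*) = \pmb{\delta}_F(r)$, together with a realisation $(a,b) \models r^*$ in a sufficiently saturated elementary extension of $F$; then automatically $\pmb{\delta}_F(a,b/A) = \pmb{\delta}_F(r^*) = \pmb{\delta}_F(r)$. Additivity of $\pmb{\delta}_F$ (which holds because $\pmb{\delta}_F$ is continuous on $F$ by Corollary~\ref{cor-main}) and additivity of $\mbox{dim}_{rk}$ in ACFA then give
\[ \pmb{\delta}_F(b/Aa) + \pmb{\delta}_F(a/A) = \pmb{\delta}_F(r) = \mbox{dim}_{rk}(r) = \mbox{dim}_{rk}(b/Aa) + \mbox{dim}_{rk}(a/A). \]
Applying Lemma~\ref{lem01} factorwise to the inequalities $\pmb{\delta}_F(b/Aa) \leq \mbox{dim}_{rk}(b/Aa)$ and $\pmb{\delta}_F(a/A) \leq \mbox{dim}_{rk}(a/A)$ forces equality in each summand; in particular $\pmb{\delta}_F(a/A) = \mbox{dim}_{rk}(a/A) = \mbox{dim}_{rk}(p)$ (the last equality because $a \models p$ and $\mbox{dim}_{rk}$ is determined by the quantifier-free type, by the Remark after the definition of $\mbox{dim}_{rk}$). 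Since $a$ realises $\exists y~\! r(x,y)$, I conclude $\pmb{\delta}_F(\exists y~\! r) \geq \pmb{\delta}_F(a/A) = \mbox{dim}_{rk}(p) = \mbox{dim}_{rk}(\exists y~\! r)$, which completes the proof.

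Granted Conjecture~\ref{conj}, the whole argument is a clean additivity squeeze and I do not foresee any serious obstacle. The only care needed is the bookkeeping that matches partial with complete types on both the coarse and rank-dimension sides, and the choice of a sufficiently saturated extension of $F$ in which to realise $r^*$; this parallels the two-case analysis already carried out in the proof of Lemma~\ref{lem-conj}, but is slicker here because applying the conjecture to $r(x,y)$ directly sidesteps the need to split into cases.
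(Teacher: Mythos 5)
Your proof is correct, and it takes a genuinely more streamlined route than the paper's. The paper first establishes Lemma~\ref{lem-conj} (the special case $\pmb{\delta}_F(\exists y\, r)=0 \Rightarrow \mbox{dim}_{rk}(\exists y\, r)=0$) via a two-case analysis depending on whether $\mbox{dim}_{rk}$ is additive on $r=p\otimes q$ or drops, and then proves Theorem~\ref{thm02} by picking a witness $a$ of maximal $\pmb{\delta}_F$, splitting it as $a=a_1a_2$ with $a_1$ of full coarse dimension, arguing by contradiction that $\pmb{\delta}_F(\exists y\, r(a_1,x_2,y))=0$, and feeding that into the lemma. Your argument sidesteps all of this: apply the Conjecture once to the full type $r(x,y)$ to get $\pmb{\delta}_F(r)=\mbox{dim}_{rk}(r)$, then realise $r$ by a tuple $(a,b)$ whose complete type has the same coarse dimension, and squeeze via additivity on both sides using Lemma~\ref{lem01} termwise. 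The squeeze immediately forces $\pmb{\delta}_F(a/A)=\mbox{dim}_{rk}(a/A)$, and you are done. The only sharp edge is your reduction $\mbox{dim}_{rk}(\exists y\, r)=\mbox{dim}_{rk}(p)$: the $(\geq)$ direction, if argued via ``every realisation of $p$ extends to one of $r$,'' requires the amalgamation/extension property for difference fields (true but worth a word). It can in fact be avoided entirely: every realisation $a'$ of $\exists y\, r$ realises $p$, so $\mbox{dim}_{rk}(a'/A)$ is constant on realisations of $\exists y\, r$ (it is determined by the quantifier-free type $p$), and your $a$ is one such realisation, giving $\mbox{dim}_{rk}(\exists y\, r)=\mbox{dim}_{rk}(a/A)$ directly. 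As in the paper, one must be mildly careful about where the realisation $(a,b)$ lives (the paper uses $\aleph_1$-saturation of $F$ to realise types over countable $A\cup\{a_1\}$; you pass to a saturated extension, which is fine so long as $\pmb{\delta}_F$ and Lemma~\ref{lem01} are read as statements about types over $A$), but this is a bookkeeping matter shared by both proofs, not a defect of your approach.
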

\begin{proof}
Note that $\pmb{\delta}_F(\exists y~\! r(x,y))=\max\{\pmb{\delta}_F(a): (F,\mbox{Frob})\models\exists y~\! r(a,y)\}$ by $\omega$-saturation of $F$. Let $a\in F$ satisfying $ \exists y~\! r(x,y)$ such that $\pmb{\delta}_F(a)$ reaches the maximal value.
We can write $a=a_1a_2$ where $\pmb{\delta}_F(a/A)=\pmb{\delta}_F(a_1/A)=|a_1|$. We may assume $\exists y~\! r(x,y)$ to be $\exists y~\! r(x_1,x_2,y)$ and that $(F,\mbox{Frob})\models\exists y~\! r(a_1,a_2,y)$. We claim that $\pmb{\delta}_F(\exists y~\! r(a_1,x_2,y))=0$.
If not, we can extend the partial type $\exists y~\! r(a_1,x_2,y)$ to a complete type $p(x_2)$ over $A\cup\{a_1\}$ with $\pmb{\delta}_F(p(x_2))=\pmb{\delta}_F(\exists y~\! r(a_1,x_2,y))>0$. Let  $b\in F^{|x_2|}$ such that $(F,\mbox{Frob})\models p(b)$. Then we have $\pmb{\delta}_F(b/a_1,A)=\pmb{\delta}_F(p(x_2))>0$.
Therefore $$\pmb{\delta}_F(a_1,b/A)=\pmb{\delta}_F(a_1/A)+\pmb{\delta}_F(b/a_1,A)>\pmb{\delta}_F(a_1/A)=\pmb{\delta}_F(\exists y~\! r(x,y)).$$
However, as $\exists y~\! r(x,y)\in\mbox{tp}(a_1,b/A)$, we have $\pmb{\delta}_F(a_1,b/A)\leq \pmb{\delta}_F(\exists y~\! r(x,y))$, a contradiction.
By Lemma \ref{lem-conj}, $\mbox{dim}_{rk}(\exists y~\! r(a_1,x_2,y))=\pmb{\delta}_F(\exists y~\! r(a_1,x_2,y))=0$. Therefore, $\mbox{dim}_{rk}(a_2/a_1,A)=0$.
By Lemma \ref{lem01}, we have $$|a_1|=\pmb{\delta}_F(a_1/A)\leq\mbox{dim}_{rk}(a_1/A)\leq |a_1|.$$ By additivity of $\mbox{dim}_{rk}$, we  have  $$\mbox{dim}_{rk}(a/A)=\mbox{dim}_{rk}(a_2/a_1,A)+\mbox{dim}_{rk}(a_1/A)=\mbox{dim}_{rk}(a_1/A)=\pmb{\delta}_F(a/A).$$
Since $\mbox{dim}_{rk}(a/A)$ is determined by the quantifier-free type of $a$ over $A$, we have $$\mbox{dim}_{rk}(a/A)=\mbox{dim}_{rk}(\mbox{qftp}(a/A))=\mbox{dim}_{rk}(\exists y~\! r(x,y)),$$ and we conclude that $$\mbox{dim}_{rk}(\exists y~\! r(x,y))=\mbox{dim}_{rk}(a/A)=\pmb{\delta}_F(a/A)=\pmb{\delta}_F(\exists y~\! r(x,y)).\qedhere$$
\end{proof}

\begin{corollary}\label{lem-03}
Suppose Conjecture \ref{conj} holds. Let $\varphi(x):=\exists y\psi(x,y)$ be an $\mathcal{L}_\sigma$-existential formula defined over $A$. Then for any $a\in F^{|x|}$ with $(F,\mbox{Frob})\models \varphi(a)$, we have $\mbox{dim}_{rk}(a/A)\leq \pmb{\delta}_F(\varphi(x))$.
\end{corollary}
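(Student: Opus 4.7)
The plan is to reduce the statement to a single complete quantifier-free type so that Theorem \ref{thm02} can be applied directly. Given $a\in F^{|x|}$ with $(F,\mbox{Frob})\models\varphi(a)$, I would first pick a witness $b\in F^{|y|}$ such that $(F,\mbox{Frob})\models \psi(a,b)$, and then set $r(x,y):=\mbox{qftp}(a,b/A)$. By construction $r(x,y)$ is a complete quantifier-free $\mathcal{L}_\sigma$-type over $A$ which contains $\psi(x,y)$, so the partial type $\exists y\, r(x,y)$ implies the formula $\varphi(x)$.

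The next step is to sandwich $\mbox{dim}_{rk}(a/A)$ between $\pmb{\delta}_F(\exists y\, r(x,y))$ and $\pmb{\delta}_F(\varphi(x))$ via Theorem \ref{thm02}. Since $a$ itself realises $\exists y\, r(x,y)$ (witnessed by $b$), we have $\mbox{dim}_{rk}(a/A)\leq \mbox{dim}_{rk}(\exists y\, r(x,y))$ directly from the definition of $\mbox{dim}_{rk}$ on partial types; in fact, because any $a'$ satisfying $\exists y\, r(x,y)$ must have the same quantifier-free type as $a$ over $A$, this inequality is an equality, but only the ``$\leq$'' direction is needed. Theorem \ref{thm02}, whose hypothesis Conjecture \ref{conj} is assumed, then gives
\[
\mbox{dim}_{rk}(\exists y\, r(x,y))=\pmb{\delta}_F(\exists y\, r(x,y)).
\]

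Finally, because $\exists y\, r(x,y)\models \varphi(x)$ and $\pmb{\delta}_F$ on a partial type is defined as the infimum over implied formulas, we get $\pmb{\delta}_F(\exists y\, r(x,y))\leq \pmb{\delta}_F(\varphi(x))$. Chaining these three inequalities yields
\[
\mbox{dim}_{rk}(a/A)\leq \mbox{dim}_{rk}(\exists y\, r(x,y))=\pmb{\delta}_F(\exists y\, r(x,y))\leq \pmb{\delta}_F(\varphi(x)),
\]
which is exactly the desired conclusion. There is no substantial obstacle here: the entire content of the corollary is packaged inside Theorem \ref{thm02}, and the only real task is to select the correct quantifier-free type $r(x,y)$ so that this theorem applies and then to verify the trivial monotonicity steps. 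The point worth being slightly careful about is the distinction between the formula $\varphi(x)$ and the partial type $\exists y\, r(x,y)$, but this is handled by the standard definition of $\pmb{\delta}_F$ on partial types given earlier in the paper.
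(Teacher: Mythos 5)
Your proposal is correct and follows essentially the same route as the paper: pick a witness $b$, form $r(x,y):=\mbox{qftp}(a,b/A)$, apply Theorem \ref{thm02} to $\exists y\, r(x,y)$, and conclude by monotonicity of $\pmb{\delta}_F$ under $\exists y\, r(x,y)\models\varphi(x)$. The only cosmetic difference is that you state $\mbox{dim}_{rk}(a/A)\leq\mbox{dim}_{rk}(\exists y\, r(x,y))$ where the paper uses the equality $\mbox{dim}_{rk}(a/A)=\mbox{dim}_{rk}(\mbox{qftp}(a/A))=\mbox{dim}_{rk}(\exists y\, r(x,y))$, which you also correctly observe.
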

\begin{proof}
%Let $$\Delta:=\{\xi(x,y)\text{ quantifier-free over }A: \pmb{\delta}_F(\exists y\neg\xi(x,y))<\pmb{\delta}_F(\varphi(x))\}.$$ We claim that $\Delta\cup\{\psi(x,y)\}$ is consistent. Suppose not then there are $\xi_1(x,y),\ldots,\xi_n(x,y)$ such that $\psi(x,y)\models\neg\left(\bigwedge_{1\leq i\leq n}\xi_i(x,y)\right)$. Therefore, $\exists y\psi(x,y)\models \exists y\left(\bigvee_{1\leq i\leq n}\neg\xi_i(x,y)\right)$. However, 
%\begin{align*}
%\pmb{\delta}_F\left(\exists y\left(\bigvee_{1\leq i\leq n}\neg\xi_i(x,y)\right)\right)=\pmb{\delta}_F\left(\bigvee_{1\leq i\leq n}\exists y\neg\xi_i(x,y)\right)\\
%=\max\{\pmb{\delta}_F(\exists y\neg\xi_i(x,y)),1\leq i\leq n\}<\pmb{\delta}_F(\varphi(x)),
%\end{align*}
%a contradiction. Let $r(x,y)$ be a complete quantifier-free type containing $\Delta\cup\{\psi(x,y)\}$. Note that $r(x,y)$ is closed under conjunction and for any $\xi(x,y)\in r(x,y)$, if $\pmb{\delta}_F(\exists y \xi(x,y))<\pmb{\delta}_F(\varphi(x))$, then $\neg\xi(x,y)\in\Delta$ by definition. Therefore, $\pmb{\delta}_F(\exists y r(x,y))=\pmb{\delta}_F(\varphi(x))$. By Theorem \ref{thm02}, $\pmb{\delta}_F(\exists y r(x,y))=\mbox{dim}_{rk}(\exists y r(x,y))$.

Suppose $a\in F^{|x|}$ and $(F,\mbox{Frob})\models \exists y \psi(a,y)$. Let $b\in F^{|y|}$ such that $(F,\mbox{Frob})\models \psi(a,b)$. Let $r(x,y)$ be the quantifier-free type of $a,b$ over $A$. Then $\pmb{\delta}_F(\exists y~\! r(x,y))\leq \pmb{\delta}_F(\exists y \psi(x,y))=\pmb{\delta}_F(\varphi(x))$. By Theorem \ref{thm02}, we have $\pmb{\delta}_F(\exists y~\! r(x,y))=\mbox{dim}_{rk}(\exists y~\! r(x,y))$. We conclude $$\mbox{dim}_{rk}(a/A)=\mbox{dim}_{rk}(\mbox{qftp}(a/A))=\mbox{dim}_{rk}(\exists y~\! r(x,y))=\pmb{\delta}_F(\exists y~\! r(x,y))\leq \pmb{\delta}_F(\varphi(x)).\qedhere$$

\end{proof}

In the following, we will discuss a positive evidence of Conjecture \ref{conj}. We will show that the conjecture is true for quantifier-free types in one variable.
To prove this, we will use the estimates on the number of solutions of formulas in ACFA, which is given in \cite{ryten2006acfa} based on Hrushovski's twisted Lang-Weil estimate.

\begin{definition}
Let $E$ be a model of ACFA and $A\subseteq E$. Let $\varphi(x)$ be a difference formula with parameters $A$. We define $$deg_\sigma(\varphi(x)):=\max\{deg_\sigma(a/A_\sigma):E\models \varphi(a)\}.$$
\end{definition}

\noindent \emph{Remark:} Given a formula $\varphi(x,y)$, seen as a family of definable sets parametrised by the variable $y$, by \cite[Section 7]{chatzidakis1999model}, the set $\{y:deg_{\sigma}(\varphi(x,y))=d\}$ is definable.

\begin{fact}\label{fact1}\cite[Theorem 1.1]{ryten2006acfa} and \cite[Theorem 2.1.1]{ryten2007model}
Let $K_q:=(\tilde{\mathbb{F}}_p,\mbox{Frob}_q:x\mapsto x^q)$ where $q$ is a power of the prime number $p$. Let $\varphi(x,y)$ be a formula in the language of difference rings, with $x=(x_1,\ldots,x_n)$ and $y=(y_1,\ldots,y_m)$. Then there is a positive constant $C$ and a finite set $D$ of pairs $(d,\mu)$ with $D\subseteq \mathbb{Z}$ and $\mu\in \mathbb{Q}^+$, such that in each field $K_q$ and each $y_0\in K_q^m$, one of the following happens:
\begin{enumerate}
\item
There is some $(d,\mu)\in D$ such that $deg_\sigma(\varphi(x,y_0))=d$, and we have the estimate
$$||\varphi(K_q^n,y_0)|-\mu q^d|\leq Cq^{d-\frac{1}{2}}.$$ 
\item
$deg_\sigma(\varphi(x,y_0))=\infty$ and $|\varphi(K_q^n,y_0)|=\infty.$
\end{enumerate}
\end{fact}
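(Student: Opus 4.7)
The plan is to reduce the statement to Hrushovski's \emph{twisted Lang-Weil estimate}, which extends the ordinary Lang-Weil bound to fixed-point counts on algebraic correspondences over finite fields equipped with a power of Frobenius. First I would partition the parameter space by $\sigma$-degree: for each formula $\varphi(x,y)$ and each integer $d$, the set $\{y_0 : deg_\sigma(\varphi(x,y_0)) = d\}$ is definable in ACFA, as recalled in the remark preceding the Fact; by compactness the finite values of $deg_\sigma(\varphi(x,y_0))$ that occur form a bounded set $\{0, 1, \ldots, d_{\max}\}$ depending only on $\varphi$. The infinite-$\sigma$-degree stratum requires no separate asymptotic argument, since transformal transcendence of some coordinate of a solution already forces the fibre $\varphi(K_q^n, y_0)$ to be infinite for each $q$. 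So it suffices to work separately on each stratum of fixed $d \leq d_{\max}$.

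On such a stratum, I would use the near-quantifier elimination in ACFA (every formula is equivalent to an existential one over a $\sigma$-quantifier-free formula) together with standard difference-algebra manipulations to rewrite the fibre $\varphi(K_q^n, y_0)$ as the projection of the set of $\mbox{Frob}_q$-fixed points of an algebraic correspondence $X_{y_0} \subseteq \mathbb{A}^N \times \mathbb{A}^N$ of total transformal dimension $d$, varying in an algebraic family over $y_0$. Hrushovski's twisted Lang-Weil estimate, applied uniformly to this family, yields the desired asymptotic $|\varphi(K_q^n, y_0)| = \mu q^d + O(q^{d-1/2})$ with both the implicit constant and the multiplicity $\mu \in \mathbb{Q}^+$ controlled by the combinatorial complexity of the family alone, not by $q$ or by $y_0$.

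The main obstacle is making the estimate uniform in $q$ and in $y_0$ simultaneously. Twisted Lang-Weil provides an estimate for a fixed correspondence; to upgrade it to the entire family $\{X_{y_0}\}_{y_0 \in K_q^m}$ one must decompose the parameter space into finitely many constructible strata on which both $d$ and the virtual multiplicity $\mu$ are constant, and then aggregate the error terms carefully so as to preserve the sharp $O(q^{d-1/2})$ bound. Since this stratification is first-order definable in $y_0$, only finitely many pairs $(d,\mu)$ can occur, and this gives the finite set $D$ together with the single constant $C$. Carrying out the stratification while keeping track of how the projection from the second step interacts with the fixed-point counting is the technical heart of the argument, and it is precisely here that Hrushovski's intersection-theoretic machinery, together with Ryten's refinement needed to make the multiplicities rational and the dimensions integer-valued in this setting, becomes indispensable.
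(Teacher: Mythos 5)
This statement is labeled as a \emph{Fact} in the paper: it is imported verbatim from Ryten's work (building on Hrushovski's twisted Lang--Weil estimate) and the paper gives no proof of it, so there is no ``paper's own proof'' to compare against. Your sketch is a reasonable high-level reconstruction of what the cited sources actually do: stratify the parameter space by $\sigma$-degree using definability of $\sigma$-degree in ACFA, reduce each finite-degree stratum to fixed-point counting on an algebraic correspondence, apply Hrushovski's twisted Lang--Weil estimate, and then engineer uniformity over the family to extract a single constant $C$ and a finite set $D$ of $(d,\mu)$ pairs. That is indeed the architecture of Ryten's Theorem~1.1.

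One place where your sketch is too glib is the infinite-$\sigma$-degree case. You assert that ``transformal transcendence of some coordinate of a solution already forces the fibre $\varphi(K_q^n,y_0)$ to be infinite for each $q$.'' But $K_q=\tilde{\mathbb{F}}_p$ contains no transcendental element over the prime field, hence certainly no transformally transcendental element, so there is no witness of transformal transcendence living inside $K_q$ itself. The condition $\deg_\sigma(\varphi(x,y_0))=\infty$ is computed in an ACFA-extension; translating it into the statement $|\varphi(K_q^n,y_0)|=\infty$ requires an actual argument --- typically that the difference variety has a projection which is (generically) Zariski-dense in $\mathbb{A}^1$, and hence picks up infinitely many points over the infinite field $\tilde{\mathbb{F}}_p$. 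This is a real step and is part of what Ryten proves, not a freebie. The rest of your outline correctly identifies the main technical burden (uniformity of the twisted Lang--Weil bound over a definable family, with rational multiplicities), which you rightly defer to Hrushovski's and Ryten's machinery.
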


\begin{lemma}\label{lem-dimsigma}
Let $\varphi(x,b)$ (with $b\in F^n$ for some $n\geq 1$) be an $\mathcal{L}_\sigma$-quantifier-free formula such that $|x|=1$ and $\pmb{\delta}_F(\varphi(x,b))=0$. Then $deg_{\sigma}(\varphi(x,b))<\infty$. In particular, $\pmb{\delta}_F(\varphi(x,b))=\mbox{dim}_{rk}(\varphi(x,b))$.
\end{lemma}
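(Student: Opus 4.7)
The plan is to argue by contraposition: assuming $\mbox{deg}_\sigma(\varphi(x,b)) = \infty$ in the ambient ACFA model, derive $\pmb{\delta}_F(\varphi(x,b)) \geq 1$. I would work inside $(\tilde{F}, \mbox{Frob}) := \prod_{p\in\mathbb{P}}(\tilde{\mathbb{F}}_p, \mbox{Frob}_p)/\mathcal{U}$, which by \cite[Theorem 1.4]{hrushovski2004elementary} is a model of ACFA containing $(F, \mbox{Frob})$ as a substructure, and write $b = (b_p)_{p\in\mathbb{P}}/\mathcal{U}$.

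The first step is to translate $\mbox{deg}_\sigma(\varphi(x,b)) = \infty$ into a statement holding for $\mathcal{U}$-almost every $p$. Fact \ref{fact1} provides a finite set $D$ depending only on $\varphi$ such that every \emph{finite} value of $\mbox{deg}_\sigma(\varphi(x,b_p))$ across all $K_p := (\tilde{\mathbb{F}}_p, \mbox{Frob}_p)$ is bounded by $N := \max\{d : (d,\mu) \in D\}$. Since for each $d \leq N$ the set $\{y : \mbox{deg}_\sigma(\varphi(x,y)) = d\}$ is definable in ACFA (remark following the definition of $\mbox{deg}_\sigma$), applying \L o\'s's theorem to the first-order formula $\bigwedge_{d \leq N} \neg(\mbox{deg}_\sigma(\varphi(x,y)) = d)$ at $b$ shows that $I := \{p \in \mathbb{P} : \mbox{deg}_\sigma(\varphi(x,b_p)) = \infty\}$ lies in $\mathcal{U}$. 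For each $p \in I$, Fact \ref{fact1} gives $|\varphi(K_p, b_p)| = \infty$.

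The second, crucial step exploits quantifier-freeness: replacing every occurrence of $\sigma$ in $\varphi$ by $\mbox{Frob}_p$ produces the ring formula $\varphi_p(x, y)$ of Definition \ref{def-phip}, and since $\sigma$ acts as $\mbox{Frob}_p$ on $\tilde{\mathbb{F}}_p$, we have the identification $\varphi(K_p, b_p) = \varphi_p(\tilde{\mathbb{F}}_p, b_p)$. For $p \in I$ this constructible subset of $\mathbb{A}^1_{\tilde{\mathbb{F}}_p}$ is infinite, hence has Zariski dimension $1$. Applying Fact \ref{fact0} to $\varphi_p(x, y)$ together with the defining inequality $k_p \geq f(p, p)$, the very computation carried out in the proof of Theorem \ref{th1} yields $|\varphi_p(\mathbb{F}_{p^{k_p}}, b_p)| \geq \tfrac{1}{2k_p} p^{k_p}$ for $p$ large enough in $I$. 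Taking logarithms and the ultralimit then forces $\pmb{\delta}_F(\varphi(x,b)) \geq 1$, contradicting the hypothesis $\pmb{\delta}_F(\varphi(x,b)) = 0$.

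The main obstacle is the translation of the infinitary statement ``$\mbox{deg}_\sigma = \infty$'' into a claim that is \L o\'s-accessible; this is made possible precisely by the uniform upper bound $N$ on finite $\sigma$-degrees supplied by Fact \ref{fact1}. For the ``in particular'' clause: once $\mbox{deg}_\sigma(\varphi(x,b)) < \infty$ is established, every $a \in \tilde{F}$ with $\varphi(a,b)$ satisfies $\mbox{deg}_\sigma(a/b) < \infty$, so $a$ is transformally algebraic over $b$, whence $\mbox{dim}_{rk}(a/b) = 0$. Taking the maximum over such $a$ gives $\mbox{dim}_{rk}(\varphi(x,b)) = 0 = \pmb{\delta}_F(\varphi(x,b))$.
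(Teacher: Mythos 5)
Your proof is correct and uses essentially the same ingredients as the paper's --- Fact \ref{fact0} (via the translation $\varphi \mapsto \varphi_p$), Fact \ref{fact1} with the uniform bound from the finite set $D$, and the ACFA-definability of $\sigma$-degree combined with \L o\'s --- but you run the argument in the contrapositive direction: the paper starts from $\pmb{\delta}_F = 0$, deduces via Fact \ref{fact0} that $\varphi_p(\tilde{\mathbb{F}}_p,b_p)$ is finite for almost all $p$, then invokes Fact \ref{fact1} and \L o\'s to get finite $\sigma$-degree; you start from infinite $\sigma$-degree, transfer it down to almost all $K_p$ by \L o\'s using the bound $N$, then feed the resulting infinitude of $\varphi_p(\tilde{\mathbb{F}}_p,b_p)$ back into the point-count. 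One small economy you missed: at the last step you invoke ``Zariski dimension $1$'' and rerun the Theorem \ref{th1} computation to identify $d=1$ in the Fact \ref{fact0} estimate, but since $\varphi_p$ is quantifier-free and $|x|=1$, the constructible set $\varphi_p(\tilde{\mathbb{F}}_p,b_p)\subseteq\mathbb{A}^1$ being infinite means it is \emph{cofinite} with a complement of size bounded by a constant $c_p$ depending only on $\varphi_p$, so $|\varphi_p(\mathbb{F}_{p^{k_p}},b_p)|\geq p^{k_p}-c_p$ directly forces $\pmb{\delta}_F(\varphi(x,b))\geq 1$ without having to match the dimension $d$ of Fact \ref{fact0} to Zariski dimension; this finite-or-cofinite dichotomy is exactly what the paper exploits (in the forward direction) to pass from $\pmb{\delta}_F=0$ to finiteness over $\tilde{\mathbb{F}}_p$.
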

\begin{proof}
Suppose $b=(b_p)_{p\in\mathbb{P}}/\mathcal{U}$. Let $\varphi_p(x,y)$ be defined as in Definition \ref{def-phip}.  As $\varphi_p(x,y)$ is quantifier-free and $|x|=1$, by Fact \ref{fact0}, there is a constant $C_p$ such that for all $\mathbb{F}_{p^k}$ and $d\in (\mathbb{F}_{p^k})^{|y|}$, either $|\varphi_p(\mathbb{F}_{p^k},d)|<C_p$ or $p^k-|\varphi_p(\mathbb{F}_{p^k},d)|<C_p\cdot p^{\frac{k}{2}}$. By our construction $k_p>C_p$. As $\pmb{\delta}_F(\varphi(x,b))=0$, there is some $V$ in the ultrafilter $\mathcal{U}$ which has the following property: 
$|\varphi_p(\mathbb{F}_{p^{k_p}},b_p)|<C_p$ for all $p\in V$. 
Note that $\varphi_p(\tilde{\mathbb{F}}_p,b_p)$ is either finite or co-finite. We may assume $\min\{|\varphi_p(\tilde{\mathbb{F}}_p,b_p)|,|\neg\varphi_p(\tilde{\mathbb{F}}_p,b_p)|\}<C_p$ and $C_p>2$. If $|\neg\varphi_p(\tilde{\mathbb{F}}_p,b_p)|<C_p$, then $|\varphi(\mathbb{F}_{p^{k_p}},b_p)|\geq p^{k_p}-C_p>p^{C_p}-C_p>C_p$. Therefore, $|\varphi_p(\tilde{\mathbb{F}}_p,b_p)|< C_p$.

Note that $\varphi_p(\tilde{\mathbb{F}}_p,b_p)$ is exactly the set $\varphi(K_p,b_p)$, where $K_p=(\tilde{\mathbb{F}}_p,\mbox{Frob}_p)$. Then, since $|\varphi(K_p,b_p)|=|\varphi_p(\tilde{\mathbb{F}}_p,b_p)|<\infty$ for each $p\in V$, we get by Fact \ref{fact1} a finite set $D\subseteq\mathbb{N}\times \mathbb{Q}^+$ such that for any $p\in V$, there is some $(d,\mu)\in D$ and the following holds: $$||\varphi(K_p,b_p)|-\mu p^d|\leq C p^{d-\frac{1}{2}}.$$ Therefore, there is some $J\in\mathcal{U}$, $J\subseteq V$ and one particular pair $(d,\mu)\in D$ such that for any $p\in J$, we have $||\varphi(K_p,b_p)|-\mu p^d|\leq C p^{d-\frac{1}{2}}$. By Fact \ref{fact1} we know that $deg_{\sigma}(\varphi(x,b_p))=d$ for any $b_p\in J$. By the previous remark, we know there is some formula $\varphi_d(y)$, such that $\varphi_d(y)$ holds in a difference field if and only if $deg_{\sigma}(\varphi(x,y))=d$. Therefore, $\varphi_d(b_p)$ holds in each $K_p$ with $p\in J$, hence $\varphi_d(b)$ holds in $(\tilde{F},\mbox{Frob})$. We conclude that $$deg_{\sigma}(a/b)\leq deg_{\sigma}(\varphi(x,b))=d.\qedhere$$
\end{proof}

If we can establish the connection between $\mbox{dim}_{rk}$ and $\pmb{\delta}_F$, then it will help us to explore more properties of $(F,\mbox{Frob})\in\mathcal{S}$. Let $X$ be a definable object in $(F,\mbox{Frob})$. If we have the control of $\mbox{dim}_{rk}$ of $X$, then we work in $(\tilde{F},\mbox{Frob})$. As it is a model of ACFA, we can use all the model-theoretic tools there. Finally, we transfer the results from $(\tilde{F},\mbox{Frob})$ back to $(F,\mbox{Frob})$. In the following, we will give an example which is about understanding definable subgroups of algebraic groups.

\begin{fact}\label{fact3}\cite[Section 6.5]{zoeND}
Let $(k,\sigma)$ be a model of ACFA. Let $G$ be a definable subgroup of some algebraic group $H(k)$. Let $\mbox{acl}_{\sigma}$ denote the algebraic closure in ACFA. Suppose $G$ is definable over $E=\mbox{acl}_{\sigma}(E)$. Then $G$ is contained in a group $\tilde{G}$ which is quantifier-free definable over $E$ and has the same SU-rank as $G$.
\end{fact}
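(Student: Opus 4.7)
The plan is to construct $\tilde{G}$ as the quantifier-free $E$-locus of the generic types of $G$ inside $H(k)$, then to promote this locus to a subgroup by a Weil-style group-chunk argument. Fix a generic $g\in G$ over $E$, meaning $SU(g/E)=SU(G)$. Let $q(x)$ denote the quantifier-free $\mathcal{L}_\sigma$-type of $g$ over $E$ and let $V_q\subseteq H(k)$ be the quantifier-free $E$-definable set it cuts out; since a definable set in ACFA has only finitely many generic types, I take $\tilde{G}$ to be the (finite) union of the loci $V_q$ as $q$ ranges over the quantifier-free parts of the generic types of $G$ over $E$. Because $E=\text{acl}_\sigma(E)$, complete types over $E$ are determined by their quantifier-free parts up to a choice of $\sigma$-extension on the algebraic closure, so the maximal $SU$-rank among complete extensions of $q$ equals $SU(g/E)=SU(G)$.

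To see $\tilde{G}$ is closed under the group operations, I argue via generic multiplication. Take $v_1,v_2\in \tilde{G}$ two independent generics over $E$, each realising some chosen quantifier-free type $q$. Using the control of complete types by quantifier-free ones, realise $v_1,v_2$ by independent generics $g_1,g_2$ of $G$ over $E$. Then $g_1g_2^{-1}\in G$ is again generic over $E$ by additivity of SU-rank, so $\text{qftp}(g_1g_2^{-1}/E)$ is one of the $q$'s used to define $\tilde{G}$, and hence $v_1v_2^{-1}\in\tilde{G}$. This gives a generic group-chunk structure on each irreducible $\sigma$-component $V_q$, sitting inside the algebraic group $H(k)$. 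A Weil-style group chunk theorem, adapted to $\sigma$-varieties in ACFA, then integrates this generic operation to a global group law on $\tilde{G}$, exhibiting it as a quantifier-free $E$-definable subgroup of $H(k)$.

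For the containment $G\subseteq \tilde{G}$, given $h\in G$ I pick a generic $g\in G$ with $g\ind_E h$. By additivity of $SU$, both $g$ and $hg$ are generics of $G$ over $E$, so both lie in $\tilde{G}$, whence $h=(hg)g^{-1}\in \tilde{G}$ by the group law just established. For the rank: $SU(G)\leq SU(\tilde{G})$ is immediate from $G\subseteq\tilde{G}$, and conversely every generic of $\tilde{G}$ over $E$ realises one of the chosen quantifier-free types $q$, which has the same maximal $SU$-rank as the corresponding generic type of $G$, yielding $SU(\tilde{G})\leq SU(G)$.

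The main obstacle, and essentially the content of the fact, is the \emph{group-chunk step}: one must verify that the generic closure under multiplication on the quantifier-free locus of a definable group in ACFA genuinely integrates to a global group law sitting inside the ambient algebraic group $H(k)$, rather than only on a generic open piece. This hinges on the theory of absolutely irreducible $\sigma$-varieties in ACFA and on the delicate fact that, over an $\text{acl}_\sigma$-closed base, the quantifier-free type of a tuple determines its complete type up to a choice of extension of $\sigma$ — a principle which is what allows the passage from generic combinatorics (available for the quantifier-free locus) to genuine algebraic closure under the group operations.
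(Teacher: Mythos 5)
This statement is cited from Chatzidakis's ACFA notes rather than proved in the paper, so there is no proof here to compare against line by line; but the way the paper uses it in the proof of Theorem~\ref{cor-defgp} (``$G_E$ is the smallest closed set containing $G_{\tilde F}$ in the $\sigma$-Zariski topology'') makes clear that the intended $\tilde G$ is the $\sigma$-Zariski closure of $G$, and that is the object you have built: over $E=\mathrm{acl}_\sigma(E)$ the union of the quantifier-free loci of the generic types of $G$ is exactly the $\sigma$-closure of $G$ in $H$.

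Where your proposal goes astray is in the step you yourself flag as ``the main obstacle.'' Invoking a Weil-style group-chunk theorem is both unnecessary and, as written, a genuine gap. The group-chunk theorem manufactures a definable group together with a \emph{generic} definable bijection between a large subset of it and a large subset of the chunk; it does not by itself tell you that the resulting group is $\tilde G$ as a subset of $H(k)$ with the restricted group law of $H$. Your later steps --- e.g.\ writing $h=(hg)g^{-1}$ for arbitrary $h\in G$ and appealing to ``the group law just established'' --- silently assume exactly the conclusion that the chunk argument has not delivered, namely that $\tilde G\cdot\tilde G^{-1}\subseteq\tilde G$ as subsets of $H(k)$, not merely for generic pairs. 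There is also a minor imprecision earlier: over $E=\mathrm{acl}_\sigma(E)$ the quantifier-free type determines the complete type outright in ACFA (no residual ``choice of $\sigma$-extension'' remains), which is precisely what makes the locus $E$-definable and rank-controlled; this should be used cleanly rather than hedged.

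The standard route avoids group chunks entirely. For each $g\in G$, left (resp.\ right) translation by $g$ and inversion are $\sigma$-Zariski homeomorphisms of $H(k)$ carrying $G$ to $G$, hence carrying the closure $\tilde G$ to itself. Thus $G\cdot\tilde G\subseteq\tilde G$ and $\tilde G^{-1}\subseteq\tilde G$. Now fix $h\in\tilde G$; the continuous map $x\mapsto xh$ sends $G$ into $\tilde G$, and since $\tilde G$ is closed it sends $\overline G=\tilde G$ into $\tilde G$. So $\tilde G\cdot\tilde G\subseteq\tilde G$, and $\tilde G$ is a subgroup of $H(k)$, quantifier-free definable over $E$ because $E$ is acl-closed and prime $\sigma$-ideals are finitely generated. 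Equality of SU-ranks then follows from your (correct) observation that generics of $\tilde G$ realize the same quantifier-free types over $E$ as generics of $G$, and over an acl-closed base these determine the complete type and hence the rank. Your construction of the set $\tilde G$ is right; replace the group-chunk detour with the translation argument and the proof closes.
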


\begin{lemma}\label{lem-polynom}
Let $(F,\mbox{Frob})\in\mathcal{S}$, $a\in F^n$ and $A\subseteq F$. Suppose $\mbox{dim}_{rk}(a/A)=k$. Then there is a finite set $\{P_1(x),\ldots,P_m(x)\}$ of difference polynomials with parameters in $A$ such that $(F,\mbox{Frob})\models \bigwedge_{i\leq m}P_i(a)=0$ and $\mbox{dim}_{rk}(\bigwedge_{i\leq m }P_i(x)=0)=k$.
\end{lemma}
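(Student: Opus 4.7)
The plan is to exploit the fact (noted in the remark following the definition of $\mbox{dim}_{rk}$) that $\mbox{dim}_{rk}(b/A)$ coincides with the transformal transcendence degree of $b$ over $A_\sigma$, which is additive and therefore defines a pregeometry on tuples. Consequently $\mbox{dim}_{rk}(b/A) \leq k$ holds if and only if every $(k+1)$-element subtuple of $b$ is transformally dependent over $A_\sigma$. I shall witness each such dependence for $a$ by a single difference polynomial and take the resulting finite collection as my list.

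For every $(k+1)$-subset $S = \{i_1,\ldots,i_{k+1}\} \subseteq \{1,\ldots,n\}$, from $\mbox{dim}_{rk}(a_S/A) \leq \mbox{dim}_{rk}(a/A) = k < |S|$ the subtuple $a_S := (a_{i_1},\ldots,a_{i_{k+1}})$ is transformally dependent over $A_\sigma$, so I pick a nonzero difference polynomial $P_S$ in the variables $x_{i_1},\ldots,x_{i_{k+1}}$ with coefficients in $A$ such that $P_S(a_S) = 0$. Regard each $P_S$ as a difference polynomial in $x = (x_1,\ldots,x_n)$ that happens to involve only the $S$-coordinates. These $\binom{n}{k+1}$ polynomials form the desired finite list, and by construction $(F,\mbox{Frob}) \models \bigwedge_S P_S(a) = 0$.

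It remains to compute the $\mbox{dim}_{rk}$ of the definable set $V := \{b : \bigwedge_S P_S(b) = 0\}$. The lower bound $\mbox{dim}_{rk}(V) \geq k$ is immediate from $a \in V$ together with $\mbox{dim}_{rk}(a/A) = k$. For the upper bound, take any $b \in V$ in the ambient ACFA model $(\tilde{F},\mbox{Frob})$: each subtuple $b_S$ satisfies the nonzero difference polynomial $P_S$ and is therefore transformally dependent over $A_\sigma$; by the pregeometry property recalled above, this forces $\mbox{dim}_{rk}(b/A) \leq k$, whence $\mbox{dim}_{rk}(V) \leq k$. The only substantive input is the matroid behaviour of transformal transcendence degree, which follows directly from additivity of $\mbox{dim}_{rk}$; the rest is bookkeeping.
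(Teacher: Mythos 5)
Your proof is correct, and it takes a genuinely different route from the one in the paper. The paper's argument fixes a transformal transcendence basis inside the tuple: it writes $a = a_1 a_2$ with $\mbox{dim}_{rk}(a_1/A) = |a_1| = k$ and $\mbox{dim}_{rk}(a_2/Aa_1) = 0$, expresses each coordinate of $a_2$ as satisfying a single difference polynomial over $(Aa_1)_\sigma$, and then substitutes $a_1$-variables back in to obtain a conjunction of $|a_2|$ difference-polynomial equations over $A$. By construction the resulting variety has rank $\leq k$ because the $a_2$-coordinates are forced to be $\sigma$-algebraic over whatever the $a_1$-coordinates evaluate to. Your construction instead witnesses a difference-polynomial relation for every $(k{+}1)$-element subtuple and relies on the pregeometry (matroid) property of transformal transcendence degree to conclude that any tuple satisfying all of these relations has rank $\leq k$. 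Both arguments hinge on the same structural fact (the matroid/additivity behaviour of transformal transcendence degree, from Cohn), but yours avoids the choice of a basis and gives a manifestly symmetric system of $\binom{n}{k+1}$ relations, while the paper's gives a leaner system of $n-k$ relations whose shape is tied to the chosen basis. One cosmetic point worth pausing on in either proof: witnesses of transformal dependence a priori have coefficients in $A_\sigma$ rather than in the sub-difference-ring generated by $A$ under $\sigma$ alone, but this is resolved by clearing denominators and, if needed, applying a power of $\sigma$, an issue the paper's proof quietly treats the same way.
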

\begin{proof}
We may write $a$ into two parts $a_1$ and $a_2$ where $\mbox{dim}_{rk}(a_1/A)=|a_1|=k$, and $\mbox{dim}_{rk}(a_2/Aa_1)=0$.  Let $(Aa_1)_\sigma$ be the difference field generated by $A\cup\{a_1\}$. Suppose $a_2:=a_2^1\cdots a_2^m$ with each $|a_2^i|=1$. Since $\mbox{dim}_{rk}(a_2^i/Aa_1)=0$ for each $i\leq m$, we get $deg_{\sigma}(a_2^i/(Aa_1)_{\sigma})<\infty$. Therefore, there is a difference polynomial $P_i(y_i,b_i)$ with $b_i\subseteq (Aa_1)_\sigma$ such that $a_2^i$ vanishes on it. Write $b_i=f_i(a_1)$ where $f_i$ is a difference polynomial with parameters in $A$. We should rearrange the order of variables such that $x_0,\ldots,x_{|a|-1}$ corresponds to the order of $a$. Suppose $a_1=a^{\ell_1}\cdots a^{\ell_{|a_1|}}$ and $a_2=a^{t_1}\cdots a^{t_{|a_2|}}$ where $a^j$ is the $j^{th}$ component of the tuple $a$. Now it is easy to see that $a$ satisfies the formula $$\varphi(x):=\bigwedge_{i\leq m}P_i(x_{t_i},f_i(x_{\ell_1},\ldots,x_{\ell_{|a_1|}}))=0,$$ and $\mbox{dim}_{rk}(\varphi(x))=k$.
\end{proof}

\begin{theorem}\label{cor-defgp} 
Let $(F,\mbox{Frob})\in\mathcal{S}$. Suppose $G$ is a definable subgroup of some algebraic group $H(F)\subseteq F^n$, both defined over a finite set $A\subseteq F$. If for any $g\in G$ we have $\mbox{dim}_{rk}(g/A)\leq \pmb{\delta}_F(G)$, then there is a quantifier-free definable group $\bar{G}\geq G$ (defined with parameters in $F$ which possibly extends $A$), such that $\pmb{\delta}_F(\bar{G})=\pmb{\delta}_F(G)$.

In particular, if Conjecture \ref{conj} holds and $G$ is a definable subgroup of an algebraic group and $G$ is defined by an existential formula, then there is a quantifier-free definable group $\bar{G}\geq G$ such that $\pmb{\delta}_F(\bar{G})=\pmb{\delta}_F(G)$.
\end{theorem}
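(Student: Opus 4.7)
Set $k:=\pmb{\delta}_F(G)$. The strategy is to pass to the ACFA envelope $(\tilde F,\mbox{Frob}):=\prod_{p\in\mathbb P}(\tilde{\mathbb F}_p,\mbox{Frob}_p)/\mathcal U$ (a model of ACFA containing $(F,\mbox{Frob})$), locate an ACFA-definable subgroup $\tilde G\le H(\tilde F)$ with $G\subseteq\tilde G$ and $\mbox{dim}_{rk}(\tilde G)=k$, invoke Fact \ref{fact3} to upgrade $\tilde G$ to a quantifier-free definable group $\bar G^\ast$ of the same $\mbox{dim}_{rk}$, and descend the resulting qf formula back to $F$. Combining the hypothesis with Lemma \ref{lem01} yields $\pmb\delta_F(g/A)\le\mbox{dim}_{rk}(g/A)\le k$ for every $g\in G$; integer-valuedness and additivity of $\pmb\delta_F$ (Corollary \ref{cor-main}) then guarantee some $g_0\in G$ with $\mbox{dim}_{rk}(g_0/A)=\pmb\delta_F(g_0/A)=k$.

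For the construction of $\tilde G$, first apply Lemma \ref{lem-polynom} to each $g\in G$ to obtain a qf $A$-definable set $\varphi_g\ni g$ with $\mbox{dim}_{rk}(\varphi_g)\le k$; working inside an $\aleph_1$-saturated elementary extension, compactness produces a single qf $A$-definable $V\supseteq G$ with $\mbox{dim}_{rk}(V)\le k$. Next, in $(\tilde F,\mbox{Frob})$ I would exploit simplicity of ACFA: the (left) stabilizer of the ACFA-type $p:=\mbox{tp}(g_0/A)$ inside $H(\tilde F)$ — or a suitable variant thereof — is a type-definable subgroup of $\mbox{dim}_{rk}\le k$, and the subgroup structure of $G$ together with the uniform bound $\mbox{dim}_{rk}(g/A)\le k$ forces $G\subseteq\mathrm{Stab}(p)$ (for any $g\in G$ the translate $gg_0$ again belongs to $G$ and so realises a type of $\mbox{dim}_{rk}\le k$ sitting over $p$). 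Finally, pass to a definable approximation of $\mathrm{Stab}(p)$ of the same $\mbox{dim}_{rk}$; such approximations exist in simple theories of finite $\mbox{dim}_{rk}$ by standard facts on connected components, and this yields the desired $\tilde G$.

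Applying Fact \ref{fact3} to $\tilde G\le H(\tilde F)$, definable over $\mbox{acl}_\sigma(A\cup B)$ for some finite $B\subseteq\tilde F$, produces a qf $\mathcal L_\sigma$-definable group $\bar G^\ast\supseteq\tilde G$ with $\mbox{SU}(\bar G^\ast)=\mbox{SU}(\tilde G)$, hence $\mbox{dim}_{rk}(\bar G^\ast)=k$. The coefficients of the finitely many difference polynomials defining $\bar G^\ast$ lie in a difference-algebraic extension of $A\cup B$, and $\aleph_1$-saturation of $(F,\mbox{Frob})$ lets us realise their complete qf type over $A$ inside $F$, producing a qf formula $\bar\theta(x)$ over some finite $A'\subseteq F$ extending $A$. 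Setting $\bar G:=\bar\theta(F^n)$, quantifier-free transfer between $F$ and $\tilde F$ ensures that $\bar G$ is a subgroup of $H(F)$ containing $G$; trivially $\pmb\delta_F(\bar G)\ge\pmb\delta_F(G)=k$, while every $h\in\bar G$ satisfies $\mbox{dim}_{rk}(h/A')\le\mbox{dim}_{rk}(\bar G)=k$, so Lemma \ref{lem01} gives $\pmb\delta_F(\bar G)\le k$. The \emph{in particular} clause is then immediate: under Conjecture \ref{conj}, Corollary \ref{lem-03} exactly supplies the hypothesis $\mbox{dim}_{rk}(g/A)\le\pmb\delta_F(G)$ for every $g$ satisfying an existentially definable $G$, and the main statement applies. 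The genuine obstacle lies in the middle paragraph: pointwise $\mbox{dim}_{rk}$-control on $G$ does not by itself produce an ACFA-definable (rather than merely type-definable) hull of $G$ of the right $\mbox{dim}_{rk}$, and the simplicity / finite-rank machinery of ACFA has to bridge exactly this gap — everything before it is bookkeeping and everything after it is routine.
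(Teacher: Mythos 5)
Your overall skeleton agrees with the paper at the endpoints (Lemma \ref{lem-polynom} plus compactness to get a qf hull of bounded $\mbox{dim}_{rk}$; Fact \ref{fact3} to turn an ACFA-definable group into a qf-definable one of the same SU-rank; the easy two-sided inequality for $\pmb\delta_F(\bar G)$ at the end), and your use of Corollary \ref{lem-03} for the \textit{in particular} clause is exactly the paper's. But the two middle steps you sketch diverge from the paper's proof and both have real gaps.

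The first gap is the construction of $\tilde G$ via a stabilizer. The set $G$ is a subgroup of $H(\tilde F)$, but it is not ACFA-definable (the formula defining it in $(F,\mbox{Frob})$ need not define a subgroup in $\tilde F$, as $(F,\mbox{Frob})\prec(\tilde F,\mbox{Frob})$ fails), so the simple-theory stabilizer machinery does not see $G$ directly. More to the point, the inclusion $G\subseteq\mathrm{Stab}(p)$ that you assert does not follow from the pointwise bound $\mbox{dim}_{rk}(g/A)\le k$: for $g\in G$ independent from $g_0$ over $A$, the product $gg_0$ lies in $G$ and has $\mbox{dim}_{rk}=k$, but nothing forces $\mbox{tp}(gg_0/A)$ to equal $p$ rather than some other type of full dimension; already for finite $G$ one sees that translates can move $p$ off itself. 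The paper sidesteps this entirely by taking the $\sigma$-Zariski closure $G_{\tilde F}$ of $G$ in $H(\tilde F)$: this is automatically a quantifier-free definable subgroup containing $G$ (Ritt--Raudenbush gives finite generation of the $\sigma$-ideal, and the closure of a subgroup is a subgroup), and the compactness step already places $I_{\tilde F}(G)$ above enough difference polynomials to guarantee $\mbox{dim}_{rk}(G_{\tilde F})\le k$. No stabilizer or approximation-by-definables is needed.

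The second gap is the descent of parameters to $F$. Realising the quantifier-free type of the parameters of $\bar G^\ast$ over $A$ inside $F$ by saturation gives a qf formula that still defines a group, but it does \emph{not} preserve the inclusion $G\subseteq\bar G^\ast$: that inclusion is a condition on the parameters relative to $G$, not just relative to $A$, and is generally lost when you replace parameters by other realisations of the same type over $A$. The paper instead observes that $G_{\tilde F}$ is invariant under all automorphisms of $(\tilde F,\mbox{Frob})$ fixing $F$, hence definable over $F$, and then descends $G_E$ from $F^{alg}$ to $F$ concretely using elementary symmetric polynomials in the field-conjugate parameter tuples (exploiting perfectness of $F$). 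That Galois-theoretic step is what actually produces a qf formula with parameters in $F$ that still contains $G$; saturation alone does not do it.
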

\begin{proof}
Suppose $G$ is defined by the formula $\varphi_G$. Let $k:=\pmb{\delta}_F(G)$.

Let $\Pi_A$ denote the set of difference polynomials in $n$-variables with coefficients in $A$.

By Lemma \ref{lem-polynom}, for any element $a\in G$, there are some $\{P_{a,i}(x):1\leq i\leq m_a\}\subset\Pi_A$ such that $(F,\mbox{Frob})\models \bigwedge_{i\leq m_a}P_{a,i}(a)=0$ and $\mbox{dim}_{rk}(\bigwedge_{i\leq m_a}P_{a,i}(x)=0)=\mbox{dim}_{rk}(a/A)$. By assumption, $\mbox{dim}_{rk}(a/A)\leq \pmb{\delta}_F(G)=k$.
Therefore, $\varphi_G(x)$ is covered by the collection of formulas $\{\bigwedge_{i\leq m_a}P_{a,i}(x)=0:a\in G\}$. Since $[\varphi_G]$ is closed in the compact space $S_n(F)$, we have by compactness, there is some finite set $a_0,\ldots,a_\ell$ such that $\varphi_G(x)\models \bigvee_{j\leq \ell}\left(\bigwedge_{i\leq m_{a_j}} P_{a_j,i}(x)=0\right)$. Let $\Phi(x):=\bigvee_{j\leq \ell}\left(\bigwedge_{i\leq m_{a_j}} P_{a_j,i}(x)=0\right)$. As $\mbox{dim}_{rk}(\bigwedge_{i\leq m_{a_j}}P_{{a_j},i}(x)=0)\leq k$ for each $j\leq \ell$, we get $\mbox{dim}_{rk}(\Phi(x))\leq k$.

Write $\Phi(x)$ into the conjunctive normal form $\displaystyle{\bigwedge_{u\leq N}\bigvee_{v\leq M_u}(P_{u,v}(x)=0)}$ for some natural numbers $N,M_u$, and each $P_{u,v}(x)\in\{P_{a_j,i}(x):j\leq \ell,i\leq m_{a_j}\}$. Hence, for each $u\leq N$, we have $\varphi_G(x)\models (\prod_{v\leq M_u}P_{u,v}(x))=0.$ 

Let $G_{\tilde{F}}$ be the $\sigma$-Zariski closure of $G$ in $H(\tilde{F})$, that is, if we define $I_{\tilde{F}}(G)=\{p\in \tilde{F}[x]_{\sigma}:p(g)=0 \text{ for all }g\in G\},$ then $$G_{\tilde{F}}:=\{h\in H(\tilde{F}):p(h)=0\text{ for all  }p\in I_{\tilde{F}}(G)\}.$$ As prime $\sigma$-ideals are finitely generated, $G_{\tilde{F}}$ is quantifier-free definable. Note that $\prod_{v\leq M_u}P_{u,v}(x)\in I_{\tilde{F}}(G)$ for each $u\leq N$. Since $$\mbox{dim}_{rk}\left(\bigwedge_{u\leq N}\left(\prod_{v\leq M_u}P_{u,v}(x)~\right)=0~\right)=\mbox{dim}_{rk}\left(\bigvee_{j\leq \ell}\bigwedge_{i\leq m_{a_j}}P_{{a_j},i}(x)=0~\right)\leq k,$$ we get $\mbox{dim}_{rk}(G_{\tilde{F}})\leq k$. 

Take an automorphism $\alpha$ of $(\tilde{F},\mbox{Frob})$ fixing $F$. Then $G=\alpha(G)\subseteq \alpha(G_{\tilde{F}})$. As $\alpha(G_{\tilde{F}})$ is also closed under the $\sigma$-Zariski topology in $(\tilde{F},\mbox{Frob})$, we get $G_{\tilde{F}}\subseteq \alpha(G_{\tilde{F}})$ which implies $G_{\tilde{F}}= \alpha(G_{\tilde{F}})$. Therefore, $G_{\tilde{F}}$ is invariant under automorphisms fixing $F$, hence it is definable over $F$. 
Let $E=\mbox{acl}_{\sigma}(F)= F^{alg}$, then by Fact \ref{fact3}, there is $G_E$ which contains $G_{\tilde{F}}$, has the same SU-rank as $G_E$ and is quantifier-free definable over $E$. In fact, $G_E$ is the smallest closed set containing $G_{\tilde{F}}$ in the $\sigma$-Zariski topology in $(F^{alg},\mbox{Frob}\upharpoonright_{F^{alg}})$.

Suppose $G_{E}$ is defined by $$\bigwedge_{0\leq j\leq \ell'}P'_j(x,\sigma(x),\ldots,\sigma^m(x),c_j)=0,$$ where $P'_j$ are polynomials in the language of rings and $c_j\subseteq F^{alg}$. For any $0\leq j\leq \ell'$, let $\{c_j^0,\ldots ,c_j^{N_j}\}\subseteq (F^{alg})^{|c_j|}$ be the set of all field conjugates of $c_j$ over $F$. Note that for any $g\in G$ we have $g,\sigma(g),\ldots,\sigma^m(g)\subseteq F$. Hence, $P'_j(g,\sigma(g),\ldots,\sigma^m(g),c_j)=0$ if and only if $P'_j(g,\sigma(g),\ldots,\sigma^m(g),c_j^i)=0$ for any $g\in G$ and $0\leq i\leq N_j$.

Let $B_j$ be the set in $H(\tilde{F})$ vanishing on $\{P'_j(x,\sigma(x),\ldots,\sigma^m(x),c_j^i):0\leq i\leq N_j\}$. Then from the above argument, we know $B_j\supseteq G$. As $B_j$ is closed under the $\sigma$-Zariski topology in $(\tilde{F},\mbox{Frob})$, we get $B_j\supseteq G_{\tilde{F}}$. Similarly, by $B_j$ being closed under the $\sigma$-Zariski topology in $(F^{alg},\mbox{Frob}\upharpoonright_{F^{alg}})$, we get $B_j\supseteq G_E$.

Now consider the formula $$\bigwedge_{0\leq j\leq \ell'}~\bigwedge_{0\leq i\leq N_j}P'_j(x,\sigma(x),\ldots,\sigma^m(x),c^i_j)=0.$$ It defines $\bigcap_{j\leq \ell'}B_j$. As before, we know that $\bigcap_{j\leq\ell'}B_j\supseteq G_E$. Clearly, we also have $\bigcap_{j\leq\ell'}B_j\subseteq G_E$. Hence, the formula above also defines $G_E$ in $H(\tilde{F})$. Now we show that $G_E$ can be made quantifier-free definable over $F$.

Fix $0\leq j\leq\ell'$ and consider the formula $$\bigwedge_{0\leq i\leq N_j}P'_j(x,x_1,\ldots,x_m,c^i_j)=0,$$ where $x_1,\ldots,x_m$ are distinct tuples of variables all have the same length as $x$. For $1\leq k\leq N_j+1$, let $e_k(t_0,\ldots,t_{N_j})$ be the $k$-elementary symmetric polynomials in $N_j+1$-variables, i.e.\ $$e_k(t_0,\ldots,t_{N_j}):=\sum_{0\leq i_1<\cdots<i_k\leq N_j}t_{i_1}\cdots t_{i_k}.$$ Then we have $\bigwedge_{0\leq i\leq N_j}P'_j(x,x_1,\ldots,x_m,c^i_j)=0$ if and only if $$\bigwedge_{1\leq k\leq N_j+1}e_k(P'_j(x,x_1,\ldots,x_m,c_j^0),\ldots,P'_j(x,x_1,\ldots,x_m,c_j^{N_j}))=0.$$ For each $1\leq k\leq N_j+1$, as $\{c^i_j:0\leq j\leq N_j\}$ is the set of all field conjugates of $c_j$ in $F^{alg}$ over $F$ and that $e_k$ is symmetric, we get $$Q^k_j(x,\ldots,x_m,b_j^k):=e_k(P'_j(x,x_1,\ldots,x_m,c_j^0),\ldots,P'_j(x,x_1,\ldots,x_m,c_j^{N_j}))$$ is invariant under field automorphisms in $\mbox{Gal}(F^{alg}/F)$. Therefore, since $F$ is a pseudofinite field, $F$ is perfect and we have $b_j^k\subseteq F$ for all $1\leq j\leq\ell'$ and $1\leq k\leq N_j+1$.

Let $\varphi_H(x)$ be the quantifier-free formula with parameters in $A$ that defines the algebraic group $H$. Now consider $$\psi(x):=\varphi_H(x)\land \left(\bigwedge_{0\leq j\leq\ell'}~\bigwedge_{1\leq k\leq N_j+1}Q^k_j(x,\sigma(x),\ldots,\sigma^m(x),b_j^k)=0\right).$$ It is easy to see that $\psi(x)$ defines $G_E$ in $(\tilde{F},\mbox{Frob})$. Note that $\psi(x)$ is quantifier-free and defined over $F$, so we can consider $\bar{G}:=\{g\in F^t:(F,\mbox{Frob})\models \psi(g)\}$. Since $H(F)$ is an algebraic group and $F$ is definably closed in $\tilde{F}$ in the language of rings, $\bar{G}$ is a quantifier-free definable group in $(F,\mbox{Frob})$ and contains $G$. Note that $\mbox{dim}_{rk}(G_E)=\mbox{dim}_{rk}(G_{\tilde{F}})\leq k$. Hence, $\pmb{\delta}_F(\bar{G})\leq \mbox{dim}_{rk}(\psi(x))=\mbox{dim}_{rk}(G_E)\leq k$. On the other hand, since $\bar{G}\supseteq G$ and $\pmb{\delta}_F(G)=k$, we get $\pmb{\delta}_F(\bar{G})\geq k$. Therefore, $\pmb{\delta}_F(\bar{G})=\pmb{\delta}_F(G)=k$, which concludes the proof of Theorem \ref{cor-defgp}.
\end{proof}

\section{Wildness of $\mathcal{S}$}\label{sec4}
This section will be some discussions about negative model-theoretic properties of the class $\mathcal{S}$ defined in Section \ref{sec2}. We will first investigate whether this family $\mathcal{S}$ is \emph{tame} in terms of the properties in Shelah's classification theory \cite{shelah1990classification}. It turns out that the answer is negative. As we have mentioned before, we will show that if a structure expands a pseudofinite field with a ``logarithmically small'' definable subset, then all the internal subsets of this definable set will be uniformly definable.\footnote{This result is known among experts. As we could not find a proof in the literature, we include it here for completeness.} Therefore, theories of structures in $\mathcal{S}$ have TP2 and the strict order property and is not decidable. We proceed by an example in $\mathcal{S}$ where the model-theoretic algebraic closure does not coincide with the algebraic closure in the sense of difference algebra. We conclude with some general remarks and questions.

\subsection{Non-tameness}
In this subsection we will show that the theory of any member of $\mathcal{S}$ has TP2 and the strict order property and is not decidable.

The proof is based on the result that the theory of pseudofinite fields has the independence property in \cite{duret}. The strategy is to modify Duret's proof to show that when an internal set is very small compared to the size of the field, then every internal subset of it can also be coded uniformly.

\begin{fact} (\cite[Proposition 4.3]{duret}) \label{duret4.3}
Let $k$ be a field and $p$ a prime different from $\mbox{char}(k)$ such that $k$ contains a $p^{th}$-root of unity.  Let $\tilde{k}$ be the algebraic closure of $k$. Suppose $f_i\in k[Y_1,\ldots,Y_m]$ and $F_i=X^p-f_i\in k[Y_1,\ldots,Y_m,X]$ for $1\leq i\leq n$. If there exist $g_i,h_i\in \tilde{k}[Y_1,\ldots,Y_m]$ and $q_i\in\mathbb{N}$ such that:
\begin{itemize}
\item
for all $i,~ f_i=g_i^{q_i} h_i$;
\item
for all $i,~g_i\text{ is prime in }\tilde{k}[Y_1,\ldots,Y_m]$
\item
for all $i \neq j,~g_i\neq g_j$
\item
for all $i\text{ and } j,~g_i\text{ does not divide }h_j$
\item
for all $i,~p\text{ does not divide }q_i.$ 
\end{itemize}

Then the ideal $J$ in $k[Y_1,\ldots,Y_m,X_1,\ldots,X_n]$ generated by $\{F_i(X_i):~1\leq i\leq n\}$ is absolutely prime, and does not contain any non-zero element in $k[Y_1,\ldots,Y_m]$.
\end{fact}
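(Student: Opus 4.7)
The plan is to interpret the quotient ring
$$R := \tilde{k}[Y_1,\ldots,Y_m,X_1,\ldots,X_n]/(F_1(X_1),\ldots,F_n(X_n))$$
as an iterated Kummer extension of $\tilde{k}[Y]$ and to verify directly that it is a domain. Since $J$ being absolutely prime means exactly that $R$ is a domain, and since $J\cap k[Y]=0$ will follow once the structural map $\tilde{k}[Y]\to R$ is known to be injective, the whole statement reduces to understanding $R$.

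First I would pass to the fraction field $K:=\tilde{k}(Y_1,\ldots,Y_m)$ and consider
$$R_K:=K[X_1,\ldots,X_n]/(X_1^p-f_1,\ldots,X_n^p-f_n).$$
This is a $K$-algebra spanned by the monomials $X_1^{a_1}\cdots X_n^{a_n}$ with $0\le a_i<p$, so $\dim_K R_K\le p^n$. On the other hand, $R_K$ surjects onto the compositum $L:=K(\sqrt[p]{f_1},\ldots,\sqrt[p]{f_n})$ via $X_i\mapsto \sqrt[p]{f_i}$. Thus if one can show $[L:K]=p^n$, dimension counting forces $R_K\cong L$, and in particular $R_K$ is a field.

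The key step is the Kummer calculation $[L:K]=p^n$; I expect the main technical work to lie here. Since $\mathrm{char}(K)\ne p$ and $K$ contains a primitive $p$-th root of unity (inherited from $k$), Kummer theory identifies this degree with the order of the subgroup of $K^{\times}/(K^{\times})^p$ generated by $[f_1],\ldots,[f_n]$, so I need those classes to be $\mathbb{F}_p$-linearly independent. Suppose a nontrivial relation $\prod_{j=1}^{n}f_j^{e_j}\in(K^{\times})^p$ with $0\le e_j<p$ not all zero. Since $\tilde{k}[Y]$ is a UFD and each $g_i$ is a prime element, one can take the $g_i$-adic valuation $v_{g_i}$ on $K^{\times}$. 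Using $f_j=g_j^{q_j}h_j$, the hypotheses ``$g_i\ne g_j$ for $i\ne j$'' and ``$g_i\nmid h_j$ for all $j$'' give $v_{g_i}(f_j)=q_i\delta_{ij}$, so $v_{g_i}\bigl(\prod_j f_j^{e_j}\bigr)=e_i q_i$. For this to be divisible by $p$, combined with $\gcd(p,q_i)=1$, forces $p\mid e_i$, hence $e_i=0$. Running this for every $i$ gives the required independence.

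To return from $R_K$ to $R$, observe that the same monomials $X^{\underline{a}}$ with $0\le a_i<p$ span $R$ as a $\tilde{k}[Y]$-module, and any $\tilde{k}[Y]$-linear relation among them would, after tensoring with $K$, produce a $K$-linear relation in $R_K$ and contradict $\dim_K R_K=p^n$. Thus $R$ is a free $\tilde{k}[Y]$-module of rank $p^n$. Freeness yields both $\tilde{k}[Y]\hookrightarrow R$ (so $J\cap k[Y]\subseteq J\cap\tilde{k}[Y]=0$) and $R\hookrightarrow R\otimes_{\tilde{k}[Y]}K=R_K$, which is a field, so $R$ is a domain. The main obstacle, as flagged, is the Kummer independence; the subtlety is that all five hypotheses on $(g_i,q_i,h_i)$ are used precisely to make the valuation computation $v_{g_i}(f_j)=q_i\delta_{ij}$ work, and dropping any of them—for instance allowing $g_i\mid h_i$ or $p\mid q_i$—lets the product become a $p$-th power.
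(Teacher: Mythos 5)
This statement is quoted by the paper as a Fact, cited from Duret's work, and the paper does not supply a proof; so there is no in-paper argument to compare against. Judged on its own, your argument is correct and self-contained. The passage to $R_K = K[X]/(X_i^p - f_i)$ with $K = \tilde{k}(Y)$, the upper bound $\dim_K R_K \le p^n$ from the monomials $X^{\underline a}$ with $0\le a_i<p$, the surjection onto $L=K(\sqrt[p]{f_1},\ldots,\sqrt[p]{f_n})$, and the Kummer computation $[L:K]=p^n$ together force $R_K\cong L$ to be a field. The valuation calculation $v_{g_i}(f_j)=q_i\delta_{ij}$ is exactly where all five hypotheses enter, and together with $\gcd(p,q_i)=1$ it kills any relation $\prod_j f_j^{e_j}\in (K^\times)^p$. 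The descent back to $R$ via freeness over $\tilde{k}[Y]$ (spanning by the same monomials, independence by tensoring with $K$) gives both $R\hookrightarrow R_K$ and $\tilde{k}[Y]\hookrightarrow R$, yielding absolute primeness and $J\cap k[Y]=0$ at the same time. One small remark: you invoke the hypothesis that $k$ contains a primitive $p$-th root of unity, but since you work entirely over $\tilde{k}$ (algebraically closed of characteristic $\neq p$), that root of unity is automatically available there; the hypothesis on $k$ is not actually load-bearing in your version of the argument, which makes it slightly stronger than the quoted statement. Duret's own proof is an inductive one showing each $X_i^p-f_i$ stays irreducible over the successive fraction fields, which is morally the same Kummer computation but unrolled; your formulation is a clean repackaging of it.
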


\begin{fact}(\cite[Theorem 7.1]{cafure})\label{lem-counting}
Let $V\subseteq (\tilde{\mathbb{F}}_q)^n$ be an absolutely irreducible $\mathbb{F}_q$-variety of dimension $r>0$ and degree $\ell$. If $q>2(r+1)\ell^2$, then the following estimate holds:
$$||(V\cap (\mathbb{F}_q)^n)|-q^r|\leq(\ell-1)(\ell-2)q^{r-\frac{1}{2}}+5\ell^{\frac{13}{3}}q^{r-1}.$$
\end{fact}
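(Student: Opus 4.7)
The plan is to prove this effective form of the Lang--Weil point-counting estimate by the classical fibration method combined with explicit Bezout-style degree tracking. First I would reduce to a situation where $V$ is generically finite over affine $r$-space via a Noether normalization: after a linear change of coordinates in $\mathbb{A}^n$, one arranges a projection $\pi : V \to \mathbb{A}^r$ that is finite of degree $\ell$ and generically \'etale, with branch locus contained in a proper closed subset $B \subset \mathbb{A}^r$ of dimension at most $r-1$ and degree polynomial in $\ell$ (the degree bound coming from an application of Bezout to the Jacobian ideal).

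The main term $q^r$ arises from counting base points $x \in \mathbb{A}^r(\mathbb{F}_q) \setminus B(\mathbb{F}_q)$ whose geometric fiber $\pi^{-1}(x)$, consisting of $\ell$ distinct $\tilde{\mathbb{F}}_q$-points, is Galois-stable and hence contributes its $\mathbb{F}_q$-rational points. The first-order error $(\ell-1)(\ell-2) q^{r-\frac{1}{2}}$ I would obtain by slicing $V$ with a generic affine plane of complementary dimension to isolate an absolutely irreducible curve $C$ of degree $\ell$; by Castelnuovo's inequality $C$ has geometric genus at most $\binom{\ell-1}{2}$, and Weil's Riemann hypothesis for curves gives $||C(\mathbb{F}_q)|-q| \leq 2g\sqrt{q} \leq (\ell-1)(\ell-2)\sqrt{q}$, which then propagates through the fibration to yield the claimed leading error.

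For the lower-order term $5\ell^{13/3}\, q^{r-1}$, I would argue inductively on dimension, estimating the number of $\mathbb{F}_q$-points contributed by the branch locus $B$ and by the misbehaving fibers over $B$ using Bezout-based degree bounds together with the inductive hypothesis applied in dimension $r-1$. The specific exponent $13/3$ reflects the accumulated degree growth: Noether normalization yields degree bounds of order $\ell^{O(1)}$ at each step, and summing these through the induction on $r$ produces the final fractional exponent.

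The main obstacle is the tight tracking of constants: producing exactly the coefficient $5$ and the fractional exponent $13/3$ requires very careful bookkeeping of degrees, especially in the analysis of the branch locus and in the effective Bertini statement ensuring that a generic hyperplane section remains absolutely irreducible over $\mathbb{F}_q$ itself. Securing this Bertini step over $\mathbb{F}_q$, rather than merely over $\tilde{\mathbb{F}}_q$, is the other delicate point, and it is precisely where the numerical hypothesis $q > 2(r+1)\ell^2$ is consumed: it guarantees that a positive proportion of $\mathbb{F}_q$-rational linear projections are geometrically generic in the required sense, so that one can actually realise the fibration and the curve slice over the prime field.
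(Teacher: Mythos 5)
This statement is a \emph{Fact} cited from Cafure--Matera (\cite[Theorem 7.1]{cafure}); the paper itself offers no proof of it, so there is no in-paper argument to compare against. Your sketch is therefore being assessed against the external source, and there it diverges in strategy: Cafure--Matera do not set up a finite projection $\pi\colon V\to\mathbb{A}^r$ of degree $\ell$ and argue via the branch locus. Their proof proceeds by induction on $\dim V$ through \emph{hyperplane sections}, using an effective form of Bertini's first theorem (over $\mathbb{F}_q$, via elimination-theoretic degree bounds) to cut $V$ down to an absolutely irreducible plane curve, to which the Weil bound with genus $\leq\binom{\ell-1}{2}$ is applied. The fractional exponent $13/3$ and the coefficient $5$ come from tracking, at each inductive step, the degree of the exceptional locus where Bertini fails and the count of $\mathbb{F}_q$-rational hyperplanes that avoid it; the hypothesis $q>2(r+1)\ell^2$ is used precisely to guarantee that a nonempty set of $\mathbb{F}_q$-rational hyperplanes yields an absolutely irreducible section, and also to keep the inductive error terms subordinate to the main term.

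The substantive gaps in your proposal, if one wanted it to stand as a proof: (i) you assert that the $(\ell-1)(\ell-2)q^{r-1/2}$ term ``propagates through the fibration,'' but in a Noether-normalization setup the error from bad fibers over the branch locus need not stay at order $q^{r-1/2}$ without a careful decomposition; Cafure--Matera's hyperplane-section induction is organized specifically so the curve-level error multiplies against an explicit count of good slices. (ii) The $5\ell^{13/3}q^{r-1}$ term is not ``accumulated Bezout growth'' in a generic sense; it is a concrete output of their effective Bertini exceptional-locus bounds, and a different decomposition would produce different (likely worse, or at least different) exponents. (iii) You would still need an effective Bertini theorem over $\mathbb{F}_q$ with explicit degree bounds on the locus of bad hyperplanes --- this is the genuine technical core and is not supplied by Noether normalization plus Bezout alone. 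So the outline is a reasonable impression of Lang--Weil--type arguments, but it is not a faithful reconstruction of \cite{cafure}, and it leaves the parts that produce the stated constants unaddressed.
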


\begin{theorem}\label{thm-sop}
Let $F=\prod_{i\in I}\mathbb{F}_{q_i}/\mathcal{U}$ be a pseudofinite field and $A=\prod_{i\in I}A_i/\mathcal{U}$ an infinite internal subset of $F$. Suppose there is a positive constant $C$ such that $\{i\in I:|A_i|\leq C\log_2q_i\}\in\mathcal{U}$. Then all internal subsets of $A$ are uniformly definable.
\end{theorem}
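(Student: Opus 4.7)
My plan is to adapt Duret's Kummer-theoretic construction from his independence-property proof for pseudofinite fields, inflating the parameter-space of the coding formula so that a single formula can realise all $2^{|A_i|} \le q_i^{C}$ subsets of $A_i$ in each factor simultaneously. By \L o\'{s}'s theorem this reduces the theorem to the following: for $\mathcal{U}$-many $i$, every subset of $A_i$ is of the form $\{a \in A_i : \mathbb{F}_{q_i} \models \varphi(a, \bar b)\}$ for some parameter $\bar b \in \mathbb{F}_{q_i}^{k+1}$.

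I focus on the case $\mathrm{char}(F) \ne 2$ and use the quadratic character; the characteristic-two case admits an entirely analogous treatment via Artin--Schreier. Fix an integer $k \ge 2C$, let $P(x, y_0, \ldots, y_k) := y_0 + y_1 x + \cdots + y_k x^k$, and take
\[
\varphi(x, y_0, \ldots, y_k) \equiv \exists z\,\bigl(z \ne 0 \wedge z^2 = P(x, \bar y)\bigr).
\]
For each $i$ with $n := |A_i| \le C \log_2 q_i$, enumerate $A_i = \{a_1, \ldots, a_n\}$; for each $S \subseteq A_i$ set $\epsilon_j = +1$ if $a_j \in S$ and $\epsilon_j = -1$ otherwise, and let $\chi$ denote the quadratic character on $\mathbb{F}_{q_i}^{\times}$, extended by $\chi(0)=0$. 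Expanding $\mathbf{1}[\text{nonzero square}] = \tfrac{1+\chi(P)}{2} - \tfrac{\mathbf{1}[P=0]}{2}$ yields
\[
N_S := \#\bigl\{\bar b : \varphi(a_j, \bar b) \Leftrightarrow a_j \in S,\ \forall j \le n\bigr\} = \frac{q_i^{k+1}}{2^n} + \frac{1}{2^n}\sum_{\emptyset \ne T \subseteq \{1,\ldots,n\}} \Bigl(\prod_{j \in T}\epsilon_j\Bigr) \sum_{\bar b \in \mathbb{F}_{q_i}^{k+1}} \chi\bigl(f_T(\bar b)\bigr) + O(n q_i^{k}),
\]
where $f_T(\bar y) := \prod_{j \in T} P(a_j, \bar y)$ and the $O(n q_i^k)$ collects the boundary contributions from the zero-loci $\{P(a_j, \bar y) = 0\}$.

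The heart of the argument is the bound on each character sum $\sum_{\bar b}\chi(f_T(\bar b))$. The polynomials $P(a_j, \bar y)$ are pairwise non-proportional linear forms in $\bar y$ by a Vandermonde argument using the distinctness of the $a_j$, so Fact~\ref{duret4.3} applies to the family $\{P(a_j, \bar y) : j \in T\}$ and implies that the Kummer cover
\[
V_T := \bigl\{(\bar y, z) \in \mathbb{A}^{k+2} : z^2 = f_T(\bar y)\bigr\}
\]
is absolutely irreducible of dimension $k+1$ and degree at most $\max(|T|,2) \le n+1$. A short unfolding using $\chi(0)=0$ gives $|V_T(\mathbb{F}_{q_i})| - q_i^{k+1} = \sum_{\bar b}\chi(f_T(\bar b))$, and Fact~\ref{lem-counting} then yields $\bigl|\sum_{\bar b}\chi(f_T(\bar b))\bigr| \le C_1 n^2 q_i^{k+1/2} + C_2 n^{13/3} q_i^{k}$. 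Aggregating via $\tfrac{1}{2^n}\sum_T\binom{n}{|T|}(\cdot)$ gives total error $O(n^2) q_i^{k+1/2} + O(n q_i^k) = \operatorname{poly}(\log q_i)\cdot q_i^{(k+1)/2}$ under the hypothesis, while the main term $q_i^{k+1}/2^n \ge q_i^{k+1-C}$ dominates whenever $(k+1)/2 > C$, i.e.\ whenever $k \ge 2C$, for sufficiently large $q_i$. Hence $N_S > 0$ for every $S$ at $\mathcal{U}$-many $i$, and \L o\'{s}'s theorem delivers the theorem.

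The main obstacle I anticipate is avoiding an exponential-in-$n$ factor in the error. A naive application of Fact~\ref{lem-counting} to the fibered Kummer variety $\{z_j^2 = P(a_j, \bar y) : j \in T\}$ would introduce degree $2^{|T|}$ and therefore an error of order $q_i^{O(C)}\cdot q_i^{(k+1)/2}$, which already overwhelms the main term once $C > 1/4$. Collapsing the $|T|$-fold system into the single Kummer cover of the product $f_T$, whose degree is merely $|T|$, circumvents this; Fact~\ref{duret4.3} is invoked precisely to guarantee the absolute irreducibility required by Fact~\ref{lem-counting} in this collapsed form.
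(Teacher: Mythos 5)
Your overall strategy --- use a quadratic-character coding and count parameters via a Lang--Weil bound for a Kummer cover, after collapsing the system $\{z_j^2 = P(a_j,\bar y)\}_{j\in T}$ into the single cover $z^2 = f_T(\bar y)$ to keep the degree polynomial in $n$ rather than $2^n$ --- is a reasonable and genuinely different take from the paper, and the collapsing step is a nice observation. However, there is a decisive arithmetic error at the final step that makes the argument fail for $C\ge 1/2$, and hence fail to prove the theorem as stated.

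The error: Fact~\ref{lem-counting} for an absolutely irreducible variety of dimension $r = k+1$ has error term of order $q^{r-\frac12} = q^{k+\frac12}$. You silently rewrite this as $q^{(k+1)/2}$ (``$= \operatorname{poly}(\log q_i)\cdot q_i^{(k+1)/2}$''), which is a different quantity. With the correct exponent, after aggregating over $T$ the error is $O(n^2)\,q_i^{k+\frac12}$, while your main term is $q_i^{k+1}/2^n \ge q_i^{k+1-C}$. The comparison then becomes $q_i^{\,1/2 - C} \gg n^2$. The exponent $k$ cancels completely, so no choice of $k$ can help: the bound succeeds only when $C<\tfrac12$. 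Intuitively this is unavoidable --- Lang--Weil gives you a relative error of $q^{-1/2}$ per fibre dimension added, so the ``win margin'' is a fixed half power of $q$ no matter how many parameters you throw at it, while you need to impose $n \approx C\log_2 q$ independent constraints. When $C\ge\tfrac12$ you run out of room.

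The fix, which is exactly what the paper does, is to pass to a fixed finite extension $F'$ of $F$ of degree $\approx 14\lceil C\rceil$ (any degree $M$ with $C/M < \tfrac12$ would suffice for the character-sum route). In $F'$ the set $A_i$ has size at most $\tfrac{C}{M}\log_2 q_i'$ with $\tfrac{C}{M} < \tfrac12$, and the exponent comparison goes through. Since $F'$ is definable inside $F$ as a vector space with multiplication given by the minimal polynomial, the coding formula pulls back to a single $\mathcal{L}$-formula over $F$ itself. Once you insert this reduction, your argument should be repairable. (A secondary, minor point: you cite Fact~\ref{duret4.3} for absolute irreducibility of the single cover $V_T\colon z^2 = f_T(\bar y)$, but that lemma is stated for systems $\{X_j^p = f_j\}$. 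The paper applies it to exactly such a fibred system; what you actually need is the more elementary fact that $z^2 - f$ is absolutely irreducible whenever $f$ is squarefree of positive degree, which holds here because the $P(a_j,\bar y)$ are pairwise non-proportional linear forms. This is easy to supply but should be stated correctly.)
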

\begin{proof}
Consider the finite algebraic extension $F'$ of $F$ of degree $14\lceil C\rceil$. As $F$ is pseudofinite, there is only one such extension and is definable. To see the definability, suppose $F'=F(\alpha)$. Let $f$ be the minimal polynomial of $\alpha$ over $F$. Then we can define $F'$ as the $14\lceil C\rceil$-dimensional vector space over $F$ with multiplication defined according to the minimal polynomial $f$.

We distinguish two cases according to $p_i:=\mbox{char}(\mathbb{F}_{q_i})$. First, let us suppose $p_i\neq 2$ and $q_i=p_i^{n_i}$. Since $x^{p_i^{14\lceil C\rceil n_i}-1}=\mathsf{1}$ for all $x\in \mathbb{F}_{p_i^{14\lceil C\rceil n_i}}$, the square root of unity exists in $\mathbb{F}_{p_i^{14\lceil C\rceil n_i}}$. As the multiplicative group of $\mathbb{F}_{p_i^{14\lceil C\rceil n_i}}$ is cyclic, take $\alpha_i\in \mathbb{F}_{p_i^{14\lceil C\rceil n_i}}$ a generator, then $\alpha_i$ is not a square in  $\mathbb{F}_{p_i^{14\lceil C\rceil n_i}}$.

\begin{claim}
Let $\varphi(y,u)$ be the formula $\exists x(x^2=y+u).$ Then for all $i\in I$ with $p_i\neq 2$ and for all $E_i\subseteq A_i$, there is $y_i\in F_{p_i^{14\lceil C\rceil n_i}}$ such that $$E_i=\varphi(y_i,\mathbb{F}_{p_i^{14\lceil C\rceil n_i}})\cap A_i.$$
\end{claim}
\begin{proof}
Let $i\in I$ with $p_i\neq 2$, $E_i\subseteq A_i$ and $t_i:=|A_i|\leq Cn_i\log_2 p_i$. Let $J$ be the ideal in $\mathbb{F}_{p_i^{14\lceil C\rceil n_i}}[X_1,\ldots,X_{t_i},Y]$ generated by $$\{X_j^2-(Y+c_j):c_j\in E_i\}\cup\{X_j^2-\alpha_i(Y+d_j):d_j\in A_i\setminus E_i\},$$ where $\alpha_i$ is a generator of $\mathbb{F}_{p_i^{14\lceil C\rceil n_i}}^{\times}$ as defined before. Let $V(J)$ be the corresponding $\mathbb{F}_{p_i^{14\lceil C\rceil n_i}}$-variety. Then $V(J)$ is absolutely irreducible by Fact \ref{duret4.3},

Suppose $V(J)\cap (\mathbb{F}_{p_i^{14\lceil C\rceil n_i}})^{t_i+1}\neq \emptyset$. Let $(x_1,\ldots,x_{t_i},y_i)$ be a solution. Then clearly $E_i\subseteq \varphi(y_i,\mathbb{F}_{p_i^{14\lceil C\rceil n_i}})$. On the other hand, if there is $d\in A_i\setminus E_i$, such that $\varphi(y_i,d)$. Then there are $x_j,x\in \mathbb{F}_{p_i^{14\lceil C\rceil n_i}}$ such that:
$$\begin{aligned}
x_j^2=\alpha_i(y_i+d);\\
x^2=y_i+d;\\
y_i-d\neq 0,
\end{aligned}$$
where the last inequality follows from Fact \ref{duret4.3}, as $Y-d\not\in J$. Hence, $\alpha_i=\left(\frac{x_j}{x}\right)^2$, contradicting that $\alpha_i$ is not a square root. Therefore, $E_i=\varphi(y_i,\mathbb{F}_{p_i^{14\lceil C\rceil n_i}})\cap A_i$. 

So we only need to show $V(J)\cap\mathbb{F}_{p_i^{14\lceil C\rceil n_i}}\neq\emptyset$.

Let $|A_i|=t_i\leq Cn_i\log_2p_i$. We calculate the dimension and the degree of $V(J)$. It is clear that the dimension of $V(J)$ is $1$, as all $X_j$ are algebraic over $Y$. Let $c_1,\ldots,c_{t_i}$ be a list of all elements in $A_i$, and for $1\leq j\leq t_i$, let $V_j$ be the variety defined by either the set of solutions of $X_j^2-(Y+c_j)$ if $c_j\in E_i$, or $X_j^2-\alpha_i(Y+c_j)$ if $c_j\not\in E_i$. Then $V(J)=\bigcap_{1\leq j\leq t_i}V_j$ and each $V_j$ has degree $2$. Therefore, by the B\'{e}zout inequality, the degree of $V(J)$ is less than or equal to $2^{t_i}$.

Suppose, towards a contradiction, that $V(J)\cap (\mathbb{F}_{p_i^{14\lceil C\rceil n_i}})^{t_i+1}=\emptyset$. Then by Fact \ref{lem-counting}, 
\begin{align*}
p_i^{14\lceil C\rceil n_i} & \leq (2^{t_i}-1)(2^{t_i}-2)p_i^{7\lceil C \rceil n_i}+5\times 2^{\frac{13}{3}t_i}\\
& \leq (p_i^{Cn_i}-1)(p_i^{Cn_i}-2)p_i^{7\lceil C\rceil n_i}+5\times p_i^{\frac{13}{3}Cn_i}\\
& < p_i^{2Cn_i}p_i^{7\lceil C\rceil n_i}+p_i^{8Cn_i}= p_i^{9\lceil C\rceil n_i}+p_i^{8Cn_i}\\
& <p_i^{14\lceil C\rceil n_i},
\end{align*}
contradiction.\qedhere
\end{proof}

The case $\mbox{char}(q_i)=2$ is similar. Suppose $q_i=2^{n_i}$. Since $3$ divides $2^{14\lceil C\rceil n_i}-1$ for each $i$, there exists $x\in \mathbb{F}_{2^{14\lceil C\rceil n_i}}$ such that $x^3=1$. Take $\beta_i$ to be the generator of the multiplicative group of $\mathbb{F}_{2^{14\lceil C\rceil n_i}}$. Then there is no $y\in \mathbb{F}_{2^{14\lceil C\rceil n_i}}$ such that $y^3=\beta_i$.
\begin{claim}
Let $\psi(y,u)$ be the formula $\exists x(x^3=y+u).$ Then for all $i\in I$ and $E_i\subseteq A_i$, there is $y_i\in \mathbb{F}_{2^{14\lceil C\rceil n_i}}$ such that $E_i=\psi(y_i,\mathbb{F}_{2^{14\lceil C\rceil n_i}})\cap A_i$.
\end{claim}
\begin{proof}
Fix some $i$ and $E_i\subseteq A_i$. Let $J$ be the ideal in $\mathbb{F}_{2^{14\lceil C\rceil n_i}}[X_1,\ldots,X_{t_i},Y]$ generated by $$\{X_j^3-(Y+c_j):c_j\in E_i\}\cup\{X_j^3-\beta_i(Y+d_j):d_j\in A_i\setminus E_i\}.$$

As in the previous argument, the variety $V(J)$ is absolutely irreducible of dimension 1 and of degree less than or equal to $3^{t_i}$. To prove the claim, we only need to show that $V(J)\cap (\mathbb{F}_{2^{14\lceil C\rceil n_i}})^{t_i+1}\neq\emptyset$. Suppose not, then by Fact \ref{lem-counting}, $$2^{14\lceil C\rceil n_i}\leq (3^{t_i}-1)(3^{t_i}-2)2^{7\lceil C\rceil n_i}+5\times 3^{\frac{13}{3} t_i}\leq 3^{2 C n_i}2^{7\lceil C\rceil n_i}+3^{7 C n_i}
<2^{14\lceil C\rceil n_i},$$ contradiction.
\end{proof}
Let $A=\prod_{i\in I}A_i/\mathcal{U}$. Assume $A$ is defined by $\chi(x)$. Define $\phi(x,y):=\psi(y,x)\land \chi (x)$ if the characteristic of $F'$ is 2, and $\phi(x,y):=\varphi(y,x)\land \chi(x)$ otherwise. Let $E=\prod_{i\in I}E_i/\mathcal{U}\subseteq A$ be any internal subset. By the previous two claims, there is $y_E\in F'$ such that $E=\phi(F',y_E)$ in $F'$. Remember that we regard $F'$ as $14\lceil C\rceil$-dimensional vector space over  $F$ and $A\subseteq F$. So as $F'$ is definable in $F$, let $\phi'(\bar{x},\bar{y})$ be the corresponding translation of $\phi(x,y)$ in $F$ and put $\theta(x,\bar{y}):=\phi'(x,0,\ldots,0,\bar{y})$. We see that $\theta(x,\bar{y})$ codes uniformly all internal subsets of $A$.
%This holds particularly for any finite subset of $A$. Now take $A_0\subsetneq A_1\subsetneq \cdots$ a sequence of strictly increasing finite sets in $A$, and let $y_{A_0}, y_{A_1},\cdots$ be the elements in $F'$ such that $A_i=\phi(y_{A_i},F')$. Then we have a strict order $\phi(y_{A_0},F')\subsetneq \phi(y_{A_1},F')\subsetneq\cdots$. 
\end{proof}
\noindent \emph{Remark:} 
\begin{itemize}
\item
From the proof we know that if $\mbox{char}(F)\neq 2$ and $q_i\geq 2^{14 |A_i|}$ for all large enough $i$, then we can take $\theta(x,\bar{y}):=\exists z^2(z^2=x+y)\land \chi(x)$ where $x,y$ are single variables and $\chi(x)$ is the formula defining $A$.
\item
The above proof of Theorem \ref{thm-sop} is purely algebraic. However, it is possible to use the Paley graphs $(P_q,R)$ constructed from $\mathbb{F}_q$ and the Bollob\'{a}s-Thomason inequalities to give a combinatoric and more neat proof when $q\equiv 1~(\mbox{mod}~ 4)$.\footnote{We would like to thank the referee to point out this observation. In fact, the Bollob\'{a}s-Thomason inequality will give a better bound than the bound we use for the Lang-Weil estimate in Fact \ref{lem-counting}. But the author has not yet found the equivalent Bollob\'{a}s-Thomason inequality in the characteristic 2 case.} The idea is that suppose we have a small subset $A\subseteq\mathbb{F}_q$ with $|A|=m$ and $E\subseteq A$. Let $V(E,A\setminus E)$ be set of vertices in $\mathbb{F}_q$ not in $A$ which connect to everything in $E$ and nothing in $A\setminus E$. Then the Bollob\'{a}s-Thomason inequality will give $$\left||V(E,A\setminus E)|-2^{-m}q\right|\leq \frac{1}{2}\left(m-2+2^{-m+1}\right)q^{\frac{1}{2}}+\frac{m}{2}.$$ Hence, when $q>>2^m$, then $V(E,A\setminus E)\neq \emptyset$. And any element in $V(E,A\setminus E)$ will code the subset $E$ inside $A$, and the coding is uniform by the formula $\varphi(x,y):=x\in A\land xRy$.
\end{itemize}

\begin{corollary}Let $F=\prod_{i\in I}\mathbb{F}_{q_i}/\mathcal{U}$ be a pseudofinite field and $B=\prod_{i\in I}B_i/\mathcal{U}$ an infinite internal subset of $F$. Suppose there is a positive constant $C$ such that $\{i\in I:|B_i|\leq C\log_2 q_i\}\in\mathcal{U}$. Then $(F,B)$ interprets the structure $N=\prod_{i\in I}(N_i,+,\times)/\mathcal{U}$, where $N_i=\{j\in\mathbb{N}:0\leq j\leq m_i\}$ for some $m_i\in\mathbb{N}$, and $+,\times$ are the addition and multiplication truncated on $N_i$ respectively.
\end{corollary}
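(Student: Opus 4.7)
The plan is to leverage the uniform definability from Theorem~\ref{thm-sop} to interpret a truncated arithmetic structure in each finite approximation, then take the ultraproduct. By Theorem~\ref{thm-sop}, there is a formula $\theta(x,\bar{y})$ such that every internal subset of $B$ equals $\theta(F,\bar{y}_E)\cap B$ for some $\bar{y}_E\in F^{|\bar{y}|}$. This makes the Boolean algebra $P(B)$ interpretable in $(F,B)$ as the quotient of $F^{|\bar{y}|}$ by $\bar{y}\sim\bar{y}'$ iff $\theta(F,\bar{y})\cap B=\theta(F,\bar{y}')\cap B$; in each finite approximation $(\mathbb{F}_{q_i},B_i)$ this gives a Boolean algebra of size $2^{|B_i|}$, and since $B$ is infinite, $|B_i|\to\infty$ along $\mathcal{U}$.

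Next, I take $m_i=|B_i|$ and interpret $N_i=(\{0,1,\ldots,m_i\},+_i,\times_i)$ by identifying $n\in\{0,\ldots,m_i\}$ with the equipotence class of $n$-element subsets of $B_i$ and defining the operations as truncated arithmetic on cardinalities: $[E]+_i[F]=[G]$ iff $|G|=\min(|E|+|F|,m_i)$, and similarly for $\times_i$. By \L o\'{s}'s theorem, the ultraproduct $N=\prod_i N_i/\mathcal{U}$ is interpreted in $(F,B)=\prod_i(\mathbb{F}_{q_i},B_i)/\mathcal{U}$, and since $m_i\to\infty$ the interpreted structure $N$ is genuinely nontrivial (containing a standard copy of $\mathbb{N}$ with standard arithmetic on initial segments).

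\textbf{Main obstacle.} The delicate step is defining equipotence $|E|=|F|$ uniformly in $i$. A priori this requires quantifying over bijections $E\to F$, which live in $P(B\times B)$; but the hypothesis only provides $|B_i|\leq C\log_2 q_i$, so $|B_i|^2$ is only $O((\log q_i)^2)$, and Theorem~\ref{thm-sop} does not directly apply to $B\times B$. To circumvent this, I would use the uniform definability of subsets of $B$ itself, together with $O(1)$ parameters from $\mathbb{F}_{q_i}$, to encode a ``reference'' maximal chain $\emptyset\subsetneq E_1^{(i)}\subsetneq\cdots\subsetneq E_{m_i}^{(i)}=B_i$ with $|E_k^{(i)}|=k$ --- encoding this chain compactly is where the algebraic structure of $\mathbb{F}_{q_i}$ (polynomial interpolation, or the cyclic structure of $\mathbb{F}_{q_i}^{\times}$) would come in, combined with the $\theta$-formula. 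Once such a chain is in hand, equipotence becomes the first-order condition ``$E$ and $F$ both lie at the same level of the reference chain'', which then makes the truncated arithmetic operations uniformly definable and completes the interpretation.
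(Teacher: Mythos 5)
You correctly identify the crux: Theorem \ref{thm-sop} only codes subsets of sets of size $O(\log q_i)$, and $|B_i|^2$ may be of order $(\log q_i)^2$, so one cannot directly quantify over graphs of bijections $E\to F$ inside $B_i\times B_i$. However, your proposed workaround --- a uniformly definable ``reference chain'' $\emptyset\subsetneq E_1^{(i)}\subsetneq\cdots\subsetneq E_{m_i}^{(i)}=B_i$ --- does not resolve the difficulty, and the step where you say equipotence becomes ``$E$ and $F$ lie at the same level of the reference chain'' hides the same problem. First, an arbitrary internal $E\subseteq B_i$ is generally equal to \emph{no} $E_k^{(i)}$, so determining its ``level'' already means determining $|E|$, which is exactly equipotence. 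Second, a full reference chain on $B_i$ is essentially a linear order on $B_i$, hence a subset of $B_i\times B_i$; encoding it with $O(1)$ parameters is therefore the same difficulty you started with, and ``polynomial interpolation / cyclic structure of $\mathbb{F}_{q_i}^\times$'' is offered only as a hope, not an argument. So there is a genuine gap at precisely the step you flagged as delicate.

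The paper's fix is different and concrete: shrink first, then square. For each $i$ pick $Y_i\subseteq B_i$ with $|B_i|^{1/4}\leq|Y_i|\leq|B_i|^{1/3}$, so $|Y_i|^3\leq|B_i|\leq C\log_2 q_i$. Choose $a\in\mathbb{F}_{q_i}$ outside the small set $W_i=\{(y_1-y_2)/(y_3-y_4):y_j\in Y_i,\,y_3\neq y_4\}$; then $(y_1,y_2)\mapsto y_1+ay_2$ is injective on $Y_i\times Y_i$, giving a set $T_i\subseteq\mathbb{F}_{q_i}$ of size $|Y_i|^2\leq\log_2 q_i$ in definable bijection with $Y_i\times Y_i$. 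Now Theorem \ref{thm-sop} \emph{does} apply to $T_i$ (and analogously to a copy of $Y_i^3$), so all subsets of $Y_i\times Y_i$ and $Y_i^3$ are uniformly coded, giving access to graphs of bijections and of binary operations. Equipotence, and the truncated $+$ and $\times$, are then defined by asking for a coded bijection/surjection, and the interpreted structure is $(\{0,\ldots,|Y_i|\},+,\times)$ rather than $(\{0,\ldots,|B_i|\},+,\times)$ --- which still has $m_i=|Y_i|\to\infty$, as required by the statement. Your plan to interpret $P(B)$ directly and work at the level of $B$ itself cannot be carried out without this ``pass to a cube root'' device (or an equivalent one); as written, the proposal is incomplete.
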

\begin{proof}
For each $i\in I$, pick $Y_i\subseteq B_i$ such that $|B_i|^{\frac{1}{4}}\leq |Y_i|\leq |B_i|^\frac{1}{3}$. Let $Y=\prod_{i\in I}Y_i/\mathcal{U}$. By Theorem \ref{thm-sop}, $Y$ is definable and all subsets of $Y_i$ are uniformly definable by some $\psi_1(y,u)$. For each $i\in I$, consider the set $W_i:=\left\lbrace \dfrac{y_1-y_2}{y_3-y_4}:y_1,y_2,y_3,y_4\in Y_i,y_3\neq y_4\right\rbrace.$ The set $W_i$ has size at most $|Y_i|^4<<|\mathbb{F}_{q_i}|$. Take any $ a\not\in W_i\cup\{\mathsf{0}\}$. Then the set $T_i:=\{y_1+ay_2:y_1,y_2\in Y_i\}$ is in definable bijection with $Y_i\times Y_i$ and of size less than $\log_2q_i$. By Theorem \ref{thm-sop}, all subsets of $T_i$, hence of $Y_i\times Y_i$, are uniformly definable by some $\psi_2(y,u)$. Similarly, we can show that all subsets of $Y_i\times Y_i\times Y_i$ are uniformly definable by some $\psi_3(y,u)$. 

%We may assume that all subsets of $Y_i$ (and $Y_i\times Y_i$, $Y_i\times Y_i\times Y_i$) can be defined uniformly by parameters in $\mathbb{F}_{p^{n_i}_i}$. 
For $a\in \mathbb{F}_{q_i}$, we write $S^1_a\subseteq Y_i$ for the set $\psi_1(a,\mathbb{F}_{q_i})$  and $S^2_a\subseteq Y_i\times Y_i$, $S^3_a\subseteq Y_i\times Y_i\times Y_i$ for $\psi_2(a,\mathbb{F}_{q_i}),\psi_3(a,\mathbb{F}_{q_i})$ respectively. 

Now define a relation $R_+\subseteq (\mathbb{F}_{q_i})^3$ by: $R_+(a,b,c)$ if there exist $g\in \mathbb{F}_{q_i}$ and $y\neq y'\in Y_i$ such that
\begin{itemize}
\item
either $S^3_g$ is the graph of a bijective function from $(S^1_a\times\{y\})\cup (S^1_b\times\{y'\})$ to $S^1_c$;
\item
or $S^1_c=Y_i$ and $S^3_g$ is the graph of a surjective function from $(S^1_a\times\{y\})\cup (S^1_b\times\{y'\})$ to $Y_i$;
\end{itemize}

Similarly, we define $R_\times\subseteq (\mathbb{F}_{q_i})^3$ by: $R_\times(a,b,c)$ if there exists $g\in \mathbb{F}_{q_i}$ such that 
\begin{itemize}
\item
either $S^3_g$ is the graph of a bijective function from $S^1_a\times S^1_b$ to $S^1_c$;
\item
or $S^1_c=Y_i$ and $S^3_g$ is the graph of a surjective function from $S^1_a\times S^1_b$ to $Y_i$;
\end{itemize}

We also define an equivalence relation $E\subseteq (\mathbb{F}_{q_i})^2$ by: $E(a,b)$ if and only if there exists $g\in \mathbb{F}_{q_i}$ such that $S^2_g$ is the graph of a bijective function from $S^1_a$ to $S^1_b$.

It is easy to see then that $R^+,R^\times$ respect the equivalence relation $E$ and $$(|Y_i|,+,\times)\simeq ((\mathbb{F}_{q_i})^2/E,R^+/E,R^\times/E).$$
\end{proof}

%\begin{fact}(Folklore)\label{fact-code}
%Let $\mathcal{M}=(F,+,\cdot,0,1,\ldots)$, where $F=\prod_{i\in I}\mathbb{F}_{{p_i}^{n_i}}/\mathcal{U}$ is a pseudofinite field. Suppose $B=\prod_{i\in I}B_i/\mathcal{U}$ an infinite definable subset of $F$. If there is some constant $C$ such that $|B_i|\leq Cn_i$ for any $i\in I$. Then there is a formula $\varphi(x,y)$ such that for any pseudofinite subset $D\subseteq B$, there is some tuple $a_D$ with $D=\varphi(F,a_D)$.
%\end{fact}
%Remark: In fact, if the characteristic of $F$ is not 2 and $9|B_i|\leq n_i$ for all large $i$, and $B$ is defined by the formula $\psi_{B}(x)$ (possibly with parameters), then we can take $$\varphi(x,y):=(\exists z~ z^2=x+y)\land \psi_B(x)$$ with both $x$ and $y$ single variables.\footnote{See Section 5 in the draft \textsl{Pseudofinite primitive permutation groups acting on one-dimensional sets} by T. Zou.}

\begin{corollary}\label{cor-interval}
Let $(F,\mbox{Frob})\in\mathcal{S}$ and $T:=Th(F,\mbox{Frob})$. Then $T$ has the strict order property and TP2. Moreover, $T$ is not decidable.
\end{corollary}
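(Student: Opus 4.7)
The plan is to invoke Theorem~\ref{thm-sop} and the corollary immediately preceding the statement, applied to the fixed field of $\mbox{Frob}$ viewed as a definable "logarithmically small" subset of $F$. Write $B := \{x \in F : \mbox{Frob}(x) = x\}$, so that $B = \prod_p \mathbb{F}_p/\mathcal{U}$ is an $\emptyset$-definable internal subset of $F$ with $|B_p| = p$. The hypothesis of Theorem~\ref{thm-sop} requires $|B_p| \leq C\log_2(p^{k_p})$ for some constant $C$, which reduces to $k_p \geq p$ for $\mathcal{U}$-almost all $p$. This follows from Fact~\ref{fact0} applied to the elementary difference formula $\varphi(x) \equiv \sigma(x) = x$: its ring translation $\varphi_p(x) \equiv (x^p = x)$ defines $\mathbb{F}_p$, of cardinality exactly $p$ in every $\mathbb{F}_{p^{k_p}}$, so $D_{\varphi_p}$ must contain the pair $(0, p)$ with error constant zero. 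Hence $N^p_\varphi \geq p$, and once $p$ exceeds the fixed length of $\sigma(x) = x$ this forces $f(p, p) \geq p$ and therefore $k_p \geq p$.

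With the hypothesis verified, Theorem~\ref{thm-sop} furnishes a formula $\theta(x, \bar y)$ uniformly coding every internal subset of $B$. The strict order property for $T$ is then immediate from
\[
\psi(\bar y_1, \bar y_2) \;\equiv\; \forall x\bigl(B(x) \wedge \theta(x, \bar y_1) \rightarrow \theta(x, \bar y_2)\bigr) \wedge \exists x\bigl(B(x) \wedge \theta(x, \bar y_2) \wedge \lnot \theta(x, \bar y_1)\bigr),
\]
which is a strict partial order on codes: any strictly increasing chain of internal subsets of the infinite pseudofinite set $B$, each coded by some $\bar y_k$, yields an infinite $\psi$-chain in $(F, \mbox{Frob})$.

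For TP2 and undecidability I invoke the corollary immediately preceding this statement. Since $B$ is $\emptyset$-definable in $(F, \mbox{Frob})$, the expansion $(F, B)$ is definitionally equivalent to $(F, \mbox{Frob})$, and the interpretation of the pseudofinite truncated arithmetic $N = \prod_p (N_p, +, \times)/\mathcal{U}$ produced by that corollary already lives inside $T = \mathrm{Th}(F, \mbox{Frob})$. The structure $N$ interprets an initial segment isomorphic to $(\mathbb{N}, +, \times)$, hence is undecidable; and the congruence formula $\exists w(x = y + w \cdot z)$ inside $N$ witnesses TP2 via disjoint residue classes modulo a nonstandard prime on the rows and the Chinese Remainder Theorem on the columns. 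Both properties transfer to $T$ through the interpretation. The only non-routine step is the lower bound $k_p \geq p$ in the first paragraph; the remainder is reassembly of results already established.
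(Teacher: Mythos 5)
Your verification of the logarithmic‑smallness hypothesis is a nice addition that the paper glosses over: the observation that $\varphi_p(x)\equiv(x^p=x)$ has exactly $p$ solutions in every $\mathbb{F}_{p^k}$ forces $(0,p)\in D_{\varphi_p}$, hence $N^p_{\sigma(x)=x}\geq p$ and $k_p\geq f(p,p)\geq p$ for $p$ large, is correct and spells out why $\mbox{Fix}(F)$ satisfies the hypothesis of Theorem~\ref{thm-sop}. Your SOP argument is essentially the paper's: code an infinite strictly increasing chain of internal subsets of $\mbox{Fix}(F)$ via the uniform coding formula; the formula $\psi(\bar y_1,\bar y_2)$ expressing strict containment of coded sets then gives a definable partial order with an infinite chain.

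The undecidability step, however, has a genuine gap. You claim that ``$N$ interprets an initial segment isomorphic to $(\mathbb{N},+,\times)$, hence is undecidable.'' The structure $N=\prod_i(N_i,+,\times)/\mathcal{U}$ is a nonstandard model; its standard initial segment $\mathbb{N}$ is \emph{not} definable in it, so there is no interpretation of $(\mathbb{N},+,\times)$ in $N$. Merely containing a copy of $\mathbb{N}$ as a subset does not yield undecidability (compare $(\mathbb{R},+,\times)$, which contains $\mathbb{N}$ but has a decidable theory). The paper instead appeals to the known undecidability of the theory of ultraproducts of truncated initial segments, citing \cite{krynicki2005theories}; this requires an argument specific to bounded/truncated arithmetic and cannot be shortcut through definability of $\mathbb{N}$.

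Your TP2 argument also diverges from the paper's and is left at a sketch level: passing through the interpretation of $N$ and building a TP2 array via the congruence formula $\exists w(x=y+wz)$, nonstandard primes, and the Chinese Remainder Theorem is plausible but requires care because $+$ and $\times$ in $N$ are \emph{truncated}, so $\exists w(x=y+wz)$ does not automatically express divisibility, and one must check that the CRT witnesses stay below the bound and that truncation never occurs in the relevant computations. The paper's route is more direct and avoids these issues entirely: it works with the uniform coding formula $\varphi(x,t)$ on $\mbox{Fix}(F)$, takes for each $n$ a set $A_n$ of size $n^n$, a bijection $\eta\colon A_n\to\{1,\ldots,n\}^{\{1,\ldots,n\}}$, and codes the slices $B_{ij}=\{a\in A_n:\eta(a)(i)=j\}$; row $i$ is a partition of $A_n$ (hence $2$-inconsistent), while every path $f$ is realised by $\eta^{-1}(f)$. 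You should either flesh out the truncated-arithmetic CRT argument or replace it by this direct combinatorial construction.
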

\begin{proof}
As the fixed field $\mbox{Fix}(F):=\{x\in F:\sigma(x)=x\}$ is definable and satisfies the condition in Theorem \ref{thm-sop}, every internal subset of $\mbox{Fix}(F)$ can be coded uniformly by some formula $\varphi(x,t)$. In particular, it will code some infinite strictly increasing chain $A_1\subsetneq A_2\subsetneq A_3\subsetneq \cdots$ of subsets of $\mbox{Fix}(F)$. Therefore, $T$ has the strict order property.

Let $\varphi(x,t)$ be the same formula. To see that $T$ has TP2, by compactness, we only need to show that given any $n\in\mathbb{N}$, there is some $(a_{ij})_{1\leq i,j\leq n}$ such that for any $1\leq i\leq n$, we have $\{\varphi(x,a_{ij}):1\leq j\leq n\}$ is 2-inconsistent and $\{\varphi(x,a_{if(i)}):1\leq i\leq n\}$ is consistent for any $f:\{1,\ldots,n\}\to \{1,\ldots,n\}$.

Given $n\in\mathbb{N}$, let $A_n\subseteq \mbox{Fix}(F)$ be a set with $n^n$-many elements. Fix a bijection $\eta:A_n\to \{1,\ldots,n\}^{\{1,\ldots,n\}}$ where $\{1,\ldots,n\}^{\{1,\ldots,n\}}$ is the set of all functions from $\{1,\ldots,n\}$ to itself. Let $(a_{ij})_{1\leq i,j\leq n}$ be such that $\varphi(x,a_{ij})$ codes the set $$B_{ij}:=\{a\in A_n:\eta(a)(i)=j\}\subseteq A_n.$$ For any $1\leq i\leq n$, as $B_{i1},\ldots,B_{in}$ form a complete partition of $A_n$, we get $\{\varphi(x,a_{ij}):1\leq j\leq n\}$ is 2-inconsistent. On the other hand, for any $f:\{1,\ldots,n\}\to\{1,\ldots,n\}$ the element $\eta^{-1}(f)\in A_n$ witnesses that $\{\varphi(x,a_{if(i)}):1\leq i\leq n\}$ is consistent.

Finally, as $(F,\mbox{Frob})$ interprets ultraproducts of initial segments of natural numbers with truncated addition and multiplication by Corollary \ref{cor-interval}, the undecidability follows from \cite[Section 4]{krynicki2005theories}.
\end{proof}

\subsection{Algebraic closure}
We now turn out attention to the study of the algebraic closure for a structure $(F,\mbox{Frob})\in\mathcal{S}$. Let $F$ be a pseudofinite field and $F^{alg}$ be the smallest algebraically closed field containing $F$. Take a tuple $a\in F$. Then the algebraic closure in the pseudofinite field $\mbox{acl}_{F}(a)$ is simply the algebraic closure in $F^{alg}$ intersected with $F$, i.e.\ $\mbox{acl}_{F}(a)=\mbox{acl}_{F^{alg}}(a)\cap F$. 

As ACFA is the model companion of the theory of difference fields, we can embed $(F,\mbox{Frob})$ into some $(K,\sigma)\models \text{ACFA}$. We might wonder if similarly, the algebraic closure in the theory of $(F,\mbox{Frob})$ is the same as the algebraic closure in $(K,\sigma)$ intersected with $F$, i.e.\ the algebraic elements are defined by difference polynomials. The following results provide a negative answer to this. 

\begin{theorem}\label{lem-defclosure}
For any $n>0$, there is some $(F,\mbox{Frob})\in\mathcal{S}$, an element $a_n\in F$and a tuple $b_n$ such that $a_n$ belongs to the definable closure of $b_n$ in $(F,\mbox{Frob})$, but $deg_{\sigma}(a_n/b_n)=n$.
\end{theorem}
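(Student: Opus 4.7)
The plan is to take $a_n$ as a generic element of the definable subfield $\mbox{Fix}(\sigma^n):=\{x\in F : \sigma^n(x)=x\}$, and $b_n$ as a transformally generic code for the singleton $\{a_n\}$ produced by Theorem \ref{thm-sop}. Concretely, I pick $(F,\mbox{Frob})=\prod_p(\mathbb{F}_{p^{k_p}},\mbox{Frob}_p)/\mathcal{U}\in\mathcal{S}$ with $\mathcal{U}$ concentrating on odd primes and $k_p$ a multiple of $n$ satisfying $k_p\geq p^{2n}/\log_2 p$. These choices guarantee that $A:=\mbox{Fix}(\sigma^n)$ has $|A_p|=p^n$ in each component and that the simpler remark after Theorem \ref{thm-sop} applies via the coding formula $\theta(x,y):=(\sigma^n(x)=x)\wedge\exists z\,(z^2=x+y)$. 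By $\aleph_1$-saturation of the ultraproduct (whose index $\mathbb{P}$ is countable), I pick $a_n\in A$ whose $\sigma$-period is exactly $n$ and whose Galois conjugates $a_n,\sigma(a_n),\ldots,\sigma^{n-1}(a_n)$ are algebraically independent over $\mathbb{Q}$ in $(\tilde F,\mbox{Frob})$; this is consistent because any fixed nontrivial $\mathbb{Z}$-polynomial relation among such conjugates of an element of $A_p=\mathbb{F}_{p^n}$ holds for at most $O(p^{n-1})$ elements, negligible compared to $p^n$. Such an $a_n$ satisfies $(a_n)_\sigma=\mathbb{Q}(a_n,\sigma(a_n),\ldots,\sigma^{n-1}(a_n))$ with transcendence degree $n$ over $\mathbb{Q}$, so $\deg_\sigma(a_n/\emptyset)=n$.

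By Theorem \ref{thm-sop} some $b'\in F$ satisfies $\theta(F,b')=\{a_n\}$. Let $T:=\{y\in F:\theta(F,y)=\{a_n\}\}$, an $a_n$-definable subset of $F$. The character-sum (Weil/Lang-Weil) estimate driving the proof of Theorem \ref{thm-sop} gives $|T_p|=p^{k_p}\cdot (1+o(1))/2^{|A_p|}$, which combined with $k_p\geq p^{2n}/\log_2 p$ forces $\log|T_p|/\log|F_p|\to 1$, so $\pmb{\delta}_F(T)=1$. The heart of the proof is to refine $b'$ inside $T$. Consider the partial type over $a_n$:
\[
\pi(x):=\{x\in T\}\cup\bigl\{Q(x,\sigma(x),\ldots,\sigma^m(x))\neq 0 : m\geq 0,\ 0\neq Q\in(a_n)_\sigma[Y_0,\ldots,Y_m]\bigr\}.
\]
Any nontrivial single-variable difference polynomial equation in $\pi$ becomes in each $\mathbb{F}_{p^{k_p}}$ (where $\sigma^j(x)=x^{p^j}$) an ordinary polynomial equation with at most polynomially many roots in $p$, hence defines a set of coarse dimension $0$ in $F$. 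Since $\pmb{\delta}_F(T)=1$, the countable type $\pi$ is finitely consistent, so by $\aleph_1$-saturation it is realized by some $b_n\in T$ that is transformally transcendental over $(a_n)_\sigma$.

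Since $b_n\in T$, $\theta(F,b_n)=\{a_n\}$ gives $a_n\in\mbox{dcl}_{(F,\mbox{Frob})}(b_n)$. For the degree, $\sigma^n(a_n)=a_n$ yields $(b_n,a_n)_\sigma=(b_n)_\sigma(a_n,\sigma(a_n),\ldots,\sigma^{n-1}(a_n))$. A nontrivial algebraic relation over $(b_n)_\sigma$ among $a_n,\sigma(a_n),\ldots,\sigma^{n-1}(a_n)$ can be cleared of denominators and regrouped as a polynomial in finitely many $\sigma^j(b_n)$ with coefficients in $\mathbb{Q}[a_n,\sigma(a_n),\ldots,\sigma^{n-1}(a_n)]$; either those coefficients do not all vanish, producing a nonzero difference polynomial over $(a_n)_\sigma$ vanishing on $b_n$ (contradicting the transformal transcendence of $b_n$ over $(a_n)_\sigma$), or they all vanish, producing a nonzero $\mathbb{Q}$-algebraic relation among the Galois conjugates of $a_n$ (contradicting their genericity). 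Hence $\mbox{trdeg}\bigl((b_n,a_n)_\sigma/(b_n)_\sigma\bigr)=n$, i.e.\ $\deg_\sigma(a_n/b_n)=n$. The main obstacle is the second step: producing $b_n$ that simultaneously codes $\{a_n\}$ and is transformally independent from $(a_n)_\sigma$, which hinges on the positive-density coarse dimension of the code set $T$ (a consequence of the Lang-Weil/Cafure-Matera input into Theorem \ref{thm-sop}) versus the coarse-dimension-zero loci of single-variable difference polynomial equations, together with $\aleph_1$-saturation.
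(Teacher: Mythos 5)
Your proposal is correct, and the overall blueprint is the same as the paper's: choose $k_p$ growing fast enough that the remark after Theorem~\ref{thm-sop} codes subsets of $\mbox{Fix}(\sigma^n)$, pick $a_n\in\mbox{Fix}(\sigma^n)$ with $\deg_\sigma(a_n)=n$, observe that the set of codes of $\{a_n\}$ (your $T$, the paper's $Y_n$) has $\pmb{\delta}_F=1$ by a Lang--Weil count (the paper isolates this as Lemma~\ref{lem-measure2n}), and then pick a suitably generic $b_n\in T$. The genuine difference is in the last step. The paper takes any $b_n\in Y_n$ with $\pmb{\delta}_F(b_n)>0$, applies additivity of $\pmb{\delta}_F$ together with Lemma~\ref{lem01} to deduce $\mbox{SU}_{\text{ACFA}}(b_n/a_n)=\omega=\mbox{SU}_{\text{ACFA}}(b_n)$, and then invokes forking independence in ACFA to conclude $\deg_\sigma(a_n/b_n)=\deg_\sigma(a_n)=n$. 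You instead build $b_n$ directly: you realize via $\aleph_1$-saturation a countable partial type asserting $b_n\in T$ and $b_n$ transformally transcendental over $(a_n)_\sigma$ (each forbidden difference-polynomial locus has $\pmb{\delta}_F=0<1=\pmb{\delta}_F(T)$), and then give an explicit clearing-of-denominators argument showing that any algebraic relation among $a_n,\sigma(a_n),\ldots,\sigma^{n-1}(a_n)$ over $(b_n)_\sigma$ would contradict either the transformal transcendence of $b_n$ or the genericity of $a_n$. Your route is somewhat more elementary in that it never passes through the SU-rank theory of ACFA, at the cost of a few extra lines of book-keeping; it also sidesteps the slightly delicate ``$\mbox{SU}=\omega$ implies independence'' step in the paper.

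Two small completeness remarks. First, for $(F,\mbox{Frob})$ to lie in $\mathcal{S}$ you must in addition take $k_p\geq f(p,p)$ as in the definition of $\mathcal{S}$; your stated bound $k_p\geq p^{2n}/\log_2 p$ guarantees the coding but not membership. Second, in your counting argument for the existence of $a_n$, the claim that a fixed nonzero $P\in\mathbb{Z}[Y_0,\ldots,Y_{n-1}]$ gives only $O(p^{n-1})$ solutions of $P(a,a^p,\ldots,a^{p^{n-1}})=0$ in $\mathbb{F}_{p^n}$ needs the observation that for $p$ large the substitution $Y_i\mapsto X^{p^i}$ sends distinct monomials of $P$ to distinct exponents (base-$p$ expansion), so the resulting one-variable polynomial is nonzero mod $p$ of degree $\leq\deg(P)\cdot p^{n-1}$; this is implicit in your ``$O(p^{n-1})$'' but worth being explicit about.
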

We need a lemma first.
\begin{lemma}\label{lem-measure2n}
Let $\displaystyle{\varphi(x;y_1,\ldots,y_n):=\exists z(z^2=x+y_1)\land \bigwedge_{2\leq i\leq n}\forall z\neg(z^2=x+y_i)}$. There is $C_n\in\mathbb{R}^{>0}$ such that for any $\mathbb{F}_q$ with $\mbox{char}(\mathbb{F}_q)\neq 2$ and $b_1,\ldots,b_n$ distinct $n$-elements in $\mathbb{F}_q$, we have $$\left||\varphi(\mathbb{F}_q,b_1,\ldots,b_n)|-\frac{q}{2^n}\right|\leq C_n\cdot q^{\frac{1}{2}}.$$
\end{lemma}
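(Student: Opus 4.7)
The plan is to convert the ``is a square'' and ``is a non-square'' conditions into character sums using the quadratic multiplicative character $\chi:\mathbb{F}_q^\times\to\{\pm1\}$ (extended by $\chi(0):=0$), and then apply Weil's estimate for character sums over polynomials. Specifically, for $a\in\mathbb{F}_q^\times$, the indicator of ``$a$ is a square'' equals $\frac{1+\chi(a)}{2}$, and the indicator of ``$a$ is a non-square'' equals $\frac{1-\chi(a)}{2}$; the values $x=-b_i$ (where some $x+b_i=0$) contribute at most $n$ to the difference between the true count and the character-sum count, so they can be absorbed into the final error.

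Assuming $b_1,\ldots,b_n$ are distinct and ignoring the at most $n$ exceptional values of $x$, I would write
$$|\varphi(\mathbb{F}_q,b_1,\ldots,b_n)|=\frac{1}{2^n}\sum_{x\in\mathbb{F}_q}\bigl(1+\chi(x+b_1)\bigr)\prod_{i=2}^n\bigl(1-\chi(x+b_i)\bigr)+O(n).$$
Expanding the product gives $2^n$ terms. The term coming from choosing the $1$ in every factor contributes $\frac{1}{2^n}\cdot q$, which is the main term. Each remaining term has the form $\pm\frac{1}{2^n}\sum_{x\in\mathbb{F}_q}\chi(f_S(x))$, where $S\subseteq\{1,\ldots,n\}$ is non-empty and $f_S(x):=\prod_{i\in S}(x+b_i)$.

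The key step is that since the $b_i$ are distinct, each $f_S$ is a squarefree polynomial of degree $|S|\geq 1$, hence is not of the form $c\cdot g(x)^2$ in $\overline{\mathbb{F}_q}[x]$. Weil's bound on character sums (see e.g.\ Schmidt's book) then yields
$$\Bigl|\sum_{x\in\mathbb{F}_q}\chi(f_S(x))\Bigr|\leq(|S|-1)\sqrt{q}\leq(n-1)\sqrt{q}.$$
Summing over the $2^n-1$ non-empty subsets $S$, the total contribution of the error terms is at most $\frac{2^n-1}{2^n}(n-1)\sqrt{q}\leq(n-1)\sqrt{q}$. Combining this with the $O(n)$ boundary error gives
$$\Bigl||\varphi(\mathbb{F}_q,b_1,\ldots,b_n)|-\frac{q}{2^n}\Bigr|\leq (n-1)\sqrt{q}+n\leq C_n\cdot q^{1/2}$$
for a suitable absolute constant $C_n$ depending only on $n$.

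I do not expect a genuine obstacle here: the argument is a textbook application of Weil's estimate once the squarefreeness of $f_S$ is observed, and the only subtlety is the harmless bookkeeping of the $n$ exceptional points where $\chi$ vanishes. The assumption $\mathrm{char}(\mathbb{F}_q)\neq 2$ is used both to ensure $\chi$ is a non-trivial quadratic character (so squares have a well-defined parity) and to ensure that $\frac{1}{2}$ makes sense in the indicator expansion.
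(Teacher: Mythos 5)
Your character-sum argument is correct and proves the lemma, but it takes a genuinely different route from the paper. The paper instead considers the ideal $J\subseteq\mathbb{F}_q[X,X_1,\ldots,X_n]$ generated by $X_1^2-(X+b_1)$ and $X_i^2-a(X+b_i)$ for $2\le i\le n$, where $a$ is a fixed non-square; by Duret's criterion (Fact~\ref{duret4.3}) $J$ is absolutely prime, so $V(J)$ is an absolutely irreducible curve of degree at most $2^n$, and the Lang--Weil estimate applied to $V(J)$ together with the observation that the projection to the $X$-coordinate is generically $2^n$-to-one yields the stated bound with $C_n=N_n/2^n$. The two approaches buy different things: the paper's route is the one it already needs for Theorem~\ref{thm-sop} (it reuses Fact~\ref{duret4.3} and Lang--Weil), so it keeps the toolbox small and fits the paper's theme of counting points on absolutely irreducible varieties; your character-sum route is more elementary and essentially self-contained (the squarefreeness of each $f_S$ substitutes for the absolute-irreducibility check), and it yields a sharper explicit constant $C_n$, since the crude Bézout bound on $\deg V(J)$ that feeds into Lang--Weil is avoided. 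One small point worth stating explicitly in your write-up: the multiplicativity $\prod_{i\in S}\chi(x+b_i)=\chi\bigl(\prod_{i\in S}(x+b_i)\bigr)$ continues to hold at the points $x=-b_i$ because both sides vanish there with the convention $\chi(0)=0$, which is what lets you expand the product over all $x\in\mathbb{F}_q$ before isolating the $\le n$ exceptional points; also the case $|S|=1$ is not an edge case since $\sum_{x}\chi(x+b_i)=0$ exactly, matching the bound $(|S|-1)\sqrt q=0$.
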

\begin{proof}
Given distinct elements $b_1,\ldots,b_n\in\mathbb{F}_q$. Take an element $a\in \mathbb{F}_q$ such that $a$ is not a square. Let $J$ be the ideal in $\mathbb{F}_q[X,X_1,\ldots,X_n]$ generated by $$\{X_1^2-(X+b_1)\}\cup\{X_i^2-a(X+b_i):2\leq i\leq n\}.$$ By Fact \ref{duret4.3}, $J$ is absolutely prime, whence $V(J)$ is an absolutely irreducible variety of dimension 1. By the Lang-Weil estimate $$||V(J)\cap (\mathbb{F}_q)^{n+1}|-q|\leq N_n\cdot q^{\frac{1}{2}},$$ where $N_n$ is a constant only depends on the degree and dimension of the variety, which in our case is independent from $b_1,\ldots,b_n,a$ and $\mathbb{F}_q$ and only depends on $n$. Let $$\pi:V(J)\cap (\mathbb{F}_q)^{n+1}\to \mathbb{F}_q$$ be the projection on the the first coordinate. Clearly, $\pi$ is a $2^n$-to-one function. Therefore, $$|\varphi(\mathbb{F}_q,b_1,\ldots,b_n)|=|\pi(V(J)\cap (\mathbb{F}_q)^{n+1})|=\frac{1}{2^n}\cdot|V(J)\cap (\mathbb{F}_q)^{n+1}|.$$ Let $C_n:=\frac{N_n}{2^n}$. We conclude that $$\left||\varphi(\mathbb{F}_q,b_1,\ldots,b_n)|-\frac{q}{2^n}\right|\leq C_n\cdot q^{\frac{1}{2}}.$$
\end{proof}
Now we prove Theorem \ref{lem-defclosure}.
\begin{proof}
Given $n\in\mathbb{N}$, for each $p\in\mathbb{P}$, let $k_p\in\mathbb{N}$ be such that \begin{itemize}
\item
$k_p>\max\{f(p,p),14p^{n}\}$ where $f(p,p)$ is given by Equation \ref{eq1} in Definition \ref{def-phip};
\item
$n!$ divides $k_p$;
\item
$\frac{p^{k_p}}{2^{p^n}}>2 C_{p^n}\cdot p^{\frac{k_p}{2}}$.
\end{itemize}
Let $(F,\mbox{Frob}):=\prod_{p\in\mathbb{P}}(\mathbb{F}_{p^{k_p}},\mbox{Frob}_p)/\mathcal{U}$ where $\mathcal{U}$ is a non-principal ultrafilter on $\mathbb{P}$.
Clearly, $(F,\mbox{Frob})\in\mathcal{S}$ and $\mbox{Fix}(\sigma^n):=\{x\in F:\sigma^n(x)=x\}\neq \mbox{Fix}(\sigma^k)$ for any $k<n$.

Take an element $a_n\in \mbox{Fix}(\sigma^n)$ such that $deg_{\sigma}(a_n)=n$. Let $$\xi(x,a_n):=\exists z(z^2=a_n+x)\land\forall y(\sigma^n(y)=y\land (y\neq a_n\to \neg\exists z(z^2=y+x))).$$

As $k_p>14p^{n}$, for each prime $p\in\mathbb{N}$ we know by Theorem \ref{thm-sop} and the subsequent remark that $Y_n:=\xi((F,\mbox{Frob}),a_n)\neq\emptyset$. We claim that $\pmb{\delta}_F(Y_n)=1$. Suppose $a_n=(a_p)_{p\in\mathbb{P}}/\mathcal{U}$. For each $p\in\mathbb{P}$, let $a_p,b_1,\ldots,b_{p^n-1}$ be a list of all elements in $\mathbb{F}_{p^n}\subseteq \mathbb{F}_{p^{k_p}}$. Let $$\varphi(x,y_1,\ldots,y_{p^n}):=\exists z(z^2=x+y_1)\land \bigwedge_{2\leq i\leq p^n}\forall z\neg(z^2=x+y_i).$$
Note that for any $b\in\mathbb{F}_{p^{k_p}}$ we have $$\xi((\mathbb{F}_{p^{k_p}},\mbox{Frob}_p),a_p)=\varphi(\mathbb{F}_{p^{k_p}},a_p,b_1,\ldots,b_{p^n-1}).$$ By Lemma \ref{lem-measure2n}, $$||\varphi(\mathbb{F}_{p^{k_p}},a_p,b_1,\ldots,b_{p^n-1})|-\frac{p^{k_p}}{2^{p^n}}|\leq C_{p^n}\cdot p^{\frac{p^{k_p}}{2}},$$ for all $p>2$. Therefore, $$|Y_n|\geq \frac{p^{k_p}}{2^{p^n}}-C_{p^n}\cdot p^{\frac{p^{k_p}}{2}}>\frac{1}{2}\cdot\frac{p^{k_p}}{2^{p^n}}.$$ Since $$\lim_{p\to\infty}\frac{\log (p^{k_p}/2\cdot 2^{p^{n}})}{\log p^{k_p}}=1,$$ we get $\pmb{\delta}_F(Y_n)=1$.

%Let $\theta(x,y)$ be given as in  with $A:=\mbox{Fix}(\sigma^n)$. As  by the remark after Theorem \ref{thm-sop} we have . Define $$Y_n:=\{b:\theta((F,\mbox{Frob}),b)=\{a_n\}\}.$$ Clearly, $Y_n$ is a set definable over $a_n$. 

Take an element $b_n\in Y_n$ such that $\pmb{\delta}_F(b_n)>0$. Note that $a_n\in \mbox{dcl}(b_n)$ and $\pmb{\delta}_F(a_n)=0$. Thus, using additivity of $\pmb{\delta}_F$, $$\pmb{\delta}_F(b_n/a_n)=\pmb{\delta}_F(a_n,b_n)-\pmb{\delta}_F(a_n)= \pmb{\delta}_F(b_n)+\pmb{\delta}_F(a_n/b_n)-\pmb{\delta}_F(a_n)=\pmb{\delta}_F(b_n)>0.$$ 

Therefore, $\mbox{SU}_{\text{ACFA}}(b_n/a_n)=\omega$. By our choice, we also have $\mbox{SU}_{\text{ACFA}}(b_n)=\omega$. Hence, $a_n$ is independent from $b_n$ in $(\tilde{F},\mbox{Frob})$. Again, by our choice, $deg_{\sigma}(a_n)=n$. But if $deg_{\sigma}(a_n/b_n)<n$, then $a_n$ and $b_n$ will not be independent in $(\tilde{F},\mbox{Frob})$ in the theory of ACFA. We conclude that $deg_{\sigma}(a_n/b_n)=n$ and $a_n$ is in the definable closure of $b_n$. 
\end{proof}

\subsection{Further remarks:}
We conclude this paper with some remarks.
\begin{enumerate}
\item
As we have mentioned in the remark after Theorem \ref{th1}, we can easily generalise the results of this paper to other classes, provided the fields grow fast enough. Let $(F,\sigma):=\prod_{i\in I}(\mathbb{F}_{{p_i}^{k_i}},\mbox{Frob}_{{p_i}^{m_i}})/\mathcal{U}$, with $p_i^{k_i}>>p_i^{m_i}$ for all $i\in I$, then all the results in Section \ref{sec2} and Section \ref{sec3} are true for $(F,\sigma)$ as well. Corollary \ref{cor-interval} will also be true if the fixed field of $(F,\sigma)$ is infinite. However, if $(F,\sigma):=\prod_{i\in I}(\mathbb{F}_{{p}^{k_i}},\mbox{Frob}_{{p}^{m_i}})/\mathcal{U}$ with $k_i$ and $p_i$ coprime for all $i\in I$, then it is not clear whether its theory will always be wild.
\item
As pointed out by Hrushovski,\footnote{Personal communication.} Conjecture \ref{conj} is related to the question of whether, for a given difference equation $E(x)=0$, the Frobenius specialization, $E_p(x)=0$ is absolutely irreducible for almost all $p\in\mathbb{P}$. If it is the case, then $E_p(x)=0$ will have enough solutions in a large finite field of characteristic $p$ by the Lang-Weil estimate. Consequently, $E(x)=0$ will have the same set of solutions in a large finite field with the Frobenius map $\mbox{Frob}_p$, whence Conjecture \ref{conj} would be true. However, it is not always the case that the Frobenius specialization is absolutely irreducible for almost all $p$.

\end{enumerate}

\label{Bibliography}

\bibliographystyle{plain} 
\bibliography{dim11.bib}

\end{document}